\author
{Rodolphe~\textsc{Richard}, Emmanuel Ullmo}
\address{Rodolphe \textsc{Richard}\\
13, rue du Croisic\\
22200 Plouisy,
Bretagne, \textsc{France}
}
\email{rodolphe.richard@normalesup.org}
\address{Emmanuel  \textsc{Ullmo}\\
IHES, Université Paris Saclay, Laboratoire Alexandre Grothendieck CNRS
}
\email{ullmo@ihes.fr}
\address{Jiaming  \textsc{Chen}\\
Institut Elie Cartan, Université de Lorraine, Site de Nancy, B.P. 70239, F-54506 Vandoeuvre-les-Nancy Cedex France }
\email{jiaming.chen@univ-lorraine.fr}
\newtheorem{thm}{Theorem}[section]
\numberwithin{equation}{section}
\newtheorem{theorem}[thm]{Théorème}
\newtheorem{corollaire}[thm]{Corollaire}
\newtheorem{definition}[thm]{Définition}
\newtheorem{proposition}[thm]{Proposition}
\newtheorem{lemme}[thm]{Lemme}
\theoremstyle{definition}
\newtheorem{remarque}[thm]{Remarque}
\newtheoremstyle{explication}
  {\topsep}
  {\topsep}
  {\itshape}
  {1em}
  {\bfseries}
  {}
  {0pt}
  {}
\theoremstyle{explication}
\newcommand{\Q}{\mathbb{Q}}
\newcommand{\R}{\mathbb{R}}
\newcommand{\Z}{\mathbb{Z}}
\newcommand{\N}{\mathbb{N}}
\def\SS{{\mathbb S}}
\def\VV{{\mathbb V}}  
\newcommand{\QQ}{\mathbb{Q}}
\newcommand{\RR}{\mathbb{R}}
\newcommand{\CC}{\mathbb{C}}
\newcommand{\ZZ}{\mathbb{Z}}
\newcommand{\NN}{\mathbb{N}}
\newcommand{\cHH}{{\mathcal H}}
\newcommand{\cE}{{\mathcal E}}
\newcommand{\cF}{\mathcal{F}}
\DeclareMathOperator{\sous}{\backslash}
\newcommand{\G}{\mathbf{G}}
\newcommand{\HH}{\mathbf{H}}
\newcommand{\wV}{\widehat{V}}
\def\Dcal{{\mathcal D}}
\def\Ecal{{\mathcal E}}
\def\Fcal{{\mathcal F}}
\def\Hcal{{\mathcal H}}
\def\Gbold{{\pmb G}}
\def\Hbold{{\pmb H}}
\def\Gam{{\Gamma}}
\def\bs{{\backslash}}
\def\dprime{{\prime\prime}}
\newcommand\supp{\operatorname{supp}}
\newcommand\ad{\operatorname{ad}}
\newcommand\GL{\pmb{\operatorname{GL}}}
\newcommand\Zcent{\operatorname{\bf Z}}
\newcommand{\der}{\textnormal{der}}
\newcommand{\WS}{\textnormal{WS}}
\newcommand{\Zar}{\textnormal{Zar}}
\newcommand{\alg}{\textnormal{alg}}
\newcommand{\an}{\textnormal{an}}
\newcommand{\SL}{\mathbf{SL}}
\newcommand{\PP}{\mathbf{P}}
\newcommand{\NNN}{\mathbf{N}}
\newcommand{\vertrestriction}{\mathrel{|}}
\renewcommand{\restriction}{\vertrestriction}
\title[Équidistribution et O-minimalité]{Équidistribution de sous-variétés faiblement spéciales\\ et O-minimalité: André-Oort géométrique}
\date{\today}
\begin{document}

\setcounter{tocdepth}{1}

\begin{abstract} Une caractérisation des sous-variétés des variétés de Shimura qui contiennent un sous ensemble Zariski dense de sous-variétés faiblement spéciales est démontrée dans \cite{Ullmo3} en combinant des résultats d'o-minimalité et des résultats de transcendance fonctionnelle (Théorème d'Ax-Lindemann hyperbolique~\cite{KUY}, obtenu peu après).
Nous obtenons dans ce texte une nouvelle preuve de cet énoncé par des techniques de dynamique sur les espaces homogènes dans l'esprit de  travaux antérieurs de Clozel et du second auteur \cite{ClozelUllmo1}, \cite{Ullmo2}. La preuve combine de la théorie ergodique à la Ratner,
à la Mozes-Shah complété par Daw-Gorodnick-Ullmo, avec un énoncé de~\cite{ActesDries} sur la dimension d'une limite de Hausdorff
d'une suite de sous-ensembles définissables (dans une théorie o-minimale) extraite d'une famille définissable. On obtient au passage des énoncés de dynamique homogène généraux valables sur des quotients arithmétiques arbitraires qui sont d'un intérêt indépendant, s'applicant par exemple dans l'étude des variations de
structures de Hodge.
\end{abstract}

\selectlanguage{french}
\maketitle
\centerline{Appendice avec Jiaming Chen: {\bf Equidistribution des sous-vari\'et\'es faiblement sp\'eciales }}
\centerline{\bf{horizontales dans les domaines de p\'eriodes }}
\tableofcontents

\section{Introduction}
\subsection{Partie géométrique de la conjecture d'André-Oort}

\subsubsection{Sous-vari\'et\'es faiblement sp\'eciales.}\label{s2}Introduisons la notion de sous-variétés faiblement spéciale d'une variété arithmétique.\footnote{Les résultats en vue seront insensibles au passage à un réseau commensurable, ni au fait que~$\Gamma$ est de congruence ou non.
On pourra choisir~$\G$ semisimple adjoint si on veut,  et~$S$ n'a pas besoin d'être une variété de Shimura \emph{stricto sensu}.}

Soit $X$ un espace hermitien symétrique.  Soient $\G$ un groupe algébrique réductif  sur~$\QQ$ et $K_{\infty}$  un sous-groupe compact maximal de $\G(\RR)$,   tels que $X=\G(\RR)/K_{\infty}$. Soit $\Gamma\subset \G(\QQ)$ un réseau arithmétique sans torsion de $\G$ et $S=\Gamma\backslash X$. Alors $S$ est un espace localement symétrique hermitien qui est muni canoniquement d'une structure de variété algébrique quasi-projective par les résultats de Baily-Borel, dite \emph{variété arithmétique}.

\begin{definition}
Les sous-vari\'et\'es \emph{faiblement sp\'eciales} de $S$ sont les sous-vari\'et\'es
alg\'ebriques irr\'eductibles de $S$ qui sont  totalement g\'eod\'esiques dans $S$.
\end{definition}
 D'apr\`es les travaux de Moonen \cite{Moonen1} toute sous-variété faiblement spéciale de $S$ peut s'écrire sous la forme
\begin{equation}\label{Eq-fs}
Z=[H,x]:=\Gamma\backslash\Gamma\cdot \HH(\RR)^+\cdot x\subseteq S
\end{equation}
pour un $\QQ$-sous-groupe algébrique réductif~$\HH$ de $\G$ et un point~$x\in X$, tels que~$X_H:=\HH(\RR)^+\cdot x$ est un sous-espace hermitien symétrique de $X$. 
Alors~$\Gamma_H=\Gamma \cap \HH(\QQ)^+$ est un réseau arithmétique de $\HH$ et au morphisme fini
\(\Gamma_H\sous X_H\to Z\) près,~$Z$ est une variété arithmétique.

%
%

\subsubsection{Sous-variétés produits faiblement spéciaux}\label{intro produits}

Dans~\eqref{Eq-fs}, si~$\HH^{ad}=\HH_1^{ad}\times \HH_2^{ad}$ est une factorisation en produit de deux groupes semi-simples pour des sous-groupes réductifs $\HH_1$ et $\HH_2$ de $\HH$, alors~$X_{H_1}:=\HH_1(\RR)^+\cdot x$ et~$X_{H_2}:=\HH_2(\RR)^+\cdot x$ sont hermitiens symétriques. De plus	
$[H_1,x]:=\Gamma\backslash\Gamma\cdot \HH_1(\RR)^+\cdot x$ et~$[H_2,x]:=\Gamma\backslash\Gamma\cdot \HH_2(\RR)^+\cdot x$ 
sont des sous-variétés faiblement spéciales de~$S$. Il y a une factorisation
~$X_H\simeq X_{H_1}\times X_{H_2}$ et
$\Gamma_{H_1}\times \Gamma_{H_2}$ est d'indice fini dans $\Gamma_H$  et le morphisme 
$$
\psi: S_1\times S_2:=\Gamma_{H_1}\backslash X_{H_1}\times  \Gamma_{H_2}\backslash X_{H_2}\longrightarrow 
\Gamma_H\sous X_H \to Z\hookrightarrow S.
$$
est le composé de  deux morphismes finis suivis d'une injection.

\begin{definition}On dit qu'une sous-variété algébrique irréductible de~$S$ est \emph{un  produit faiblement spécial} si
elle se met sous la forme, 
\[
V=\psi(S_1\times V_2),
\]
pour une sous-variété algébrique irréductible~$V_2$ de~$S_2$
avec~$\dim(S_1)$, $\dim(S_2)$ et~$\dim(V_2)$ non nulles.
\end{definition}

\subsubsection{André-Oort géométrique}

Si $V$ est faiblement spéciale de dimension non nulle, ou si $V$ est un produit faiblement spécial, l'ensemble de sous-variétés faiblement spéciales de dimension positive de~$S$ contenues dand $V$ est Zariski dense dans $V$. 

Un résultat du second auteur de ce texte \cite{Ullmo3} qui utilise  le Théorème d'Ax-Lindemann hyperbolique prouvé dans~\cite{KUY} montre que cela caractérise les sous-variétés irréductibles de $S$ qui contiennent un sous-ensemble Zariski-dense de sous-variétés faiblement spéciales de dimension positive. C'est la partie géométrique de la conjecture d'André-Oort:
 
 \begin{theorem}\label{T1}
Une sous-variété algébrique irréductible $V$ de $S$ qui contient un sous-ensemble Zariski-dense de sous-variétés faiblement spéciales, toutes de dimension non nulle, est une sous variété faiblement spéciale ou un  produit faiblement spécial.
 \end{theorem}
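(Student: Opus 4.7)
Le plan combine la dynamique homogène (Ratner--Mozes-Shah dans la version arithmétique de Daw-Gorodnik-Ullmo) avec un contrôle o-minimal de la dimension des limites de Hausdorff dans des familles définissables~\cite{ActesDries}. Choisissons une suite $(Z_n)_{n \geq 1}$ de sous-variétés faiblement spéciales de dimension positive contenues dans $V$, $Z_n = [\HH_n, x_n]$, dont la réunion est Zariski-dense dans $V$, et notons $\mu_n$ la mesure de probabilité canonique portée par $Z_n$. Quitte à remplacer $S$ par la plus petite sous-variété faiblement spéciale contenant $V$ (qui est elle-même une variété arithmétique), on peut supposer $V$ Hodge-générique dans $S$, c'est-à-dire non contenue dans aucune sous-variété faiblement spéciale propre.

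Par Ratner--Mozes-Shah, une sous-suite de $(\mu_n)$ converge faiblement vers une mesure de probabilité homogène $\mu_\infty$ portée par une sous-variété faiblement spéciale $Z_\infty = [\HH_\infty, x_\infty]$ ; de plus, pour $n$ assez grand, $Z_n$ est contenu dans un translaté $g_n \cdot Z_\infty$ avec $g_n \to 1$ dans $\G(\RR)$. Quitte à extraire et à regrouper par type de $\HH_n$, on peut supposer que tous les $Z_n$ appartiennent à une même famille définissable dans une structure o-minimale appropriée (via l'uniformisation hermitienne du domaine $X$), et que $(Z_n)$ converge au sens de Hausdorff vers un fermé $Y \subseteq S$. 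Le résultat de van den Dries~\cite{ActesDries} donne $\dim Y \leq \dim Z_n$ ; comme $\supp(\mu_\infty) = Z_\infty \subseteq Y$, on obtient $\dim Z_n = \dim Z_\infty$, et les $Z_n$ sont asymptotiquement des translatés exacts de $Z_\infty$, de même type.

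Deux cas apparaissent alors. Si la variation $g_n$ est inessentielle, alors $V \subseteq Z_\infty$ et la Hodge-généricité force $V = Z_\infty$, qui est faiblement spéciale. Sinon, la variation transverse non-triviale des $g_n$ révèle, par une seconde application de la dichotomie ergodique--o-minimale à la famille secondaire constituée des translatés $g_n \cdot Z_\infty$, un facteur semi-simple $\HH_2 \subseteq \G$ normalisant $\HH_\infty$, et une décomposition $\HH_\infty^{\ad} \cdot \HH_2^{\ad} = \HH_1^{\ad} \times \HH_2^{\ad}$ comme dans le paragraphe~\ref{intro produits} ; les $Z_n$ deviennent fibres d'une projection naturelle, et la Zariski-clôture de leur réunion est un produit faiblement spécial $V = \psi(S_1 \times V_2)$, où $V_2$ est la Zariski-clôture dans $S_2$ des composantes transverses des $g_n \cdot x_\infty$.

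L'obstacle principal est précisément la récupération de la structure produit dans ce second cas : il faut établir qu'une variation transverse non-triviale des $g_n$ correspond nécessairement à un facteur $\QQ$-algébrique de $\G$, et que la Zariski-clôture des composantes transverses est bien algébrique. Cela requiert d'itérer la combinaison Mozes-Shah et o-minimalité sur la famille secondaire des translatés, pour identifier simultanément le sous-groupe $\HH_2$ via la dynamique des directions transverses et la sous-variété $V_2$ via l'analyse des limites de Hausdorff ; c'est l'interaction précise entre ces deux ingrédients qui concentre la difficulté technique.
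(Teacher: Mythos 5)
The central step of your argument is flawed. You apply the o-minimal semicontinuity of dimension under Hausdorff limits to the sequence $(Z_n)$ itself, claiming that after extraction the $Z_n$ belong to a single definable family, and you deduce $\dim Z_n=\dim Z_\infty$ for $n$ large (hence that the $Z_n$ are eventually exact translates of $Z_\infty$). This is false: take for $S$ a product of two modular curves and for $Z_n$ the Hecke curves $T_n$; these are special curves whose canonical measures equidistribute towards the Haar measure of $S$, so that $\dim Z_\infty=2>1=\dim Z_n$. The point is that the $Z_n$ do \emph{not} form a definable family --- their complexity (number of pieces met by a fixed fundamental set) is unbounded --- so the inequality $\dim(\lim Z_n)\leq\liminf\dim Z_n$ simply does not apply to them. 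The paper instead applies the o-minimal input to the family $\left(\Gamma H_\infty g\cap K\right)_{g}$ of translated orbits of the \emph{fixed limit group} $H_\infty$, which is a definable family by a reduction-theoretic finiteness statement (Corollaire~\ref{dc}), intersected with the (definable) preimage of $V$; the measures $\mu_n$, supported on the small orbits sitting inside the big ones, give positive mass to $V$, while the limit $\mu_\infty$ does not charge definable sets of dimension $<\dim H_\infty$, whence $V$ contains a non-empty open subset of each $\Gamma\backslash\Gamma H_\infty g_n\cdot x_0$. Note that the hypothesis $Z_n\subseteq V$ is essential in that argument, whereas your dimension count never uses $V$ at all --- a sign that it proves too much.

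Two further gaps. First, your product case is explicitly left open, and the route you sketch (a second ergodic/o-minimal dichotomy applied to a ``secondary family'' of translates) is not what is needed; the paper chooses the $Z_n$ \emph{maximal} among weakly special subvarieties of $V$, uses the baby case of Ax--Lindemann to get $Z_n=\overline{\Gamma\backslash\Gamma H_\infty g_n\cdot x_0}^{Zar}$ and hence $\HH_n=\HH_\infty$, and then invokes Proposition~3.5 de \cite{Ullmo3}: if $\HH_\infty$ is not normal in $\G$, the $Z_n$ lie in a finite union of proper special subvarieties (contradicting the strictness of the sequence), and if $\HH_\infty$ is normal the product decomposition is immediate. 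Second, you assume tightness of $(\mu_n)$ without justification; when mass escapes to infinity the limit measure can vanish, and the paper needs the Daw--Gorodnik--Ullmo refinement (écritures de type DGU and Théorème~\ref{Charnière DGU}) to handle that case.
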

 
\subsubsection{Approche}\label{sApproche}
Nous donnons dans ce texte, une méthode alternative pour obtenir ce résultat. Notre approche  combine des idées de dynamique homogène avec des arguments  d'o-minimalité qui sont nouveaux dans ce contexte. Nous obtenons dans l'appendice de ce papier rédigé en commun avec Jiaming Chen une extension de ce résultat dans le contexte général des $\ZZ$-variations de structures de Hodge.

Cette méthode s'inscrit dans la continuation de travaux initiés par Clozel et le second auteur de ce texte \cite{ClozelUllmo1}, \cite{ClozelUllmo2}.
 La technologie sur lesquels s’appuie cette approche nous vient de la première moitié des années 1990: les Théorèmes de Ratner sur la conjecture de Raghunathan  affirmant la rigidité des flots unipotents \cite{Ra91a},  un article de Mozes-Shah \cite{MoSh} qui précise la limite d'une suite convergente donnée de mesures homogènes, en application des Théorèmes de Ratner et les résultats de Dani-Margulis \cite{DaMa} qui permettent de contrôler les phénomènes de perte de masse dans ces questions. En dynamique homogène, nous requérons en outre un résultat récent de~\cite{DGU2} prolongeant~\cite{MoSh}, détaillé en~\ref{sDGU}.




\subsection{Dynamique homogène et Stratégie}\label{strategie}
 Notre approche dynamique nous ammène au Théorème~\ref{T1} par un Théorème~\ref{T2} que l'on peut considérer comme son alter ego dynamique. 
 
Les sous-variétés faiblement spéciales de $S$ sont munies canoniquement d'une mesure de probabilité grâce à leur interprétation en terme d'espaces localement symétriques hermitiens. 
On considère plus généralement dans ce texte des \emph{sous-ensembles homogènes} de $S$. Ce sont les ensembles analytiques fermés de $S$ de la forme 
\begin{equation}\label{Eq-HS}
Z=\Gamma\backslash\Gamma \HH(\RR)^+\cdot x \mapsto S
\end{equation}
pour un sous-groupe algébrique $\HH$ de $\G$ sans caractère rationnel de sorte que $\Gamma_H$ est un réseau de~$\HH$.  

On munit~$Z$ d'une mesure de probabilité~$\mu_Z$, l'image directe de la mesure de probabilité invariante sur~$\Gamma_H\sous H$
par
\[(\Gamma\cap H)\sous H \to\Gamma\sous\Gamma H\to \Gamma \sous \Gamma\cdot H\cdot x\subseteq\Gamma \sous  X\]
Une telle mesure, dans~$S$, sera dite \emph{homogène} dans la suite. Si le choix du couple~$[H,x]$ dans l'écriture~$Z=[H,x]$  n'est pas unique, la mesure~$\mu_Z$ ne dépend que de~$Z$.

Ce texte suit la démarche de Clozel et du second auteur qui consiste à étudier des suites arbitraires de sous-variétés faiblement spéciales de dimension non nulle via les suites de mesures de probabilités associées. 

Il sera utile de disposer des définitions suivantes concernant des suites de sous-variétés algébriques de $S$ et des suites de sous-variétés faiblement spéciales.

\begin{definition} Soit $V$ une sous-variété algébrique irréductible de $S$.
Une suite~$(Z_n)_{n\geq 0}$ de sous-variétés irréductibles~$Z_n$ de $V$ est
dite
\begin{itemize}
\item \emph{Zariski dense} si aucune sous variété \emph{fermée et propre} de $V$ ne la contient;
\item \emph{Hodge générique} si aucune sous variété \emph{spéciale}  propre de $S$ ne la contient;
\item \emph{générique} si toute sous-suite infinie est Zariski dense (le point générique de~$Z_n$ converge vers le point générique dans le schéma~$V$);
\item \emph{stricte} si toute sous-suite infinie est Hodge générique.
\end{itemize}
\end{definition}

\begin{definition}
Une suite de sous-variété faiblement spéciales~$Z_n$ est dite
\begin{itemize}
\item \emph{convergente en mesure} (dans un espace~$\overline{S}$) si la suite des mesures canoniques~$\mu_n=\mu_{Z_n}$ admet une limite dans l'espace des mesures de probabilités  dans l'espace topologique~$\overline{S}$ contenant~$S$: par exemple la compactification de Borel-Serre, de Satake maximale, ou de Bailly-Borel de~$S$.  
\item \emph{tendue (en mesure) } si les conditions du Théorème de Prokhorov s'appliquent à cette suite des mesures canoniques. Un critère équivalent est donné par la théorie de Dani-Margulis:
il existe un compact de~$S$ qui rencontre chacune des~$Z_n$.
\end{itemize}
\end{definition}

La difficulté centrale que la méthode de ce texte permet de surmonter se rencontre déjà dans le cas où la suite $(Z_n)_{n\in\NN}$ est tendue.  Pour traiter le cas non tendu nous utilisons les résultats récents  de C.~Daw, A.~Gorodnick et le second auteur de ce texte \cite{DGU1}, \cite{DGU2}, qui décrivent le comportement des suites de mesures homogènes sur les compactifications maximales de Satake  des espaces localement symétriques. 

Dans l'approche initiée par Clozel et le second auteur, on étudiait une suite de sous-variétés spéciales $(Z_n)_{n\in \NN}$ de $S$. En faisant des hypothèses supplémentaires, $Z_n$ \emph{fortement spéciale} dans  \cite{ClozelUllmo1}, et plus généralement~\emph{non facteur} dans  \cite{ClozelUllmo2}, on concluait en premier que $(Z_n)_{n\in \NN}$ était tendue, et ensuite que toute limite faible $\mu_{\infty}$ de la suite de mesures
 $(\mu_{Z_n})_{n\in \NN}$ était la mesure associée à une sous-variété spéciale $Z_{\infty}$ (et même respectivement fortement spéciale ou non facteur). La conclusion topologique que l'on obtenait,  dans l'esprit du Théorème \ref{T1}, reposait sur le fait que si la suite des $\mu_{Z_n}$ convergeait vers $\mu_{Z_{\infty}}$, alors on pouvait montrer l'inclusion $Z_n\subset Z_{\infty}$ pour tout $n$ assez grand.

Quand on travaille, comme dans ce texte, avec une suite arbitraire $(Z_n)_{n\in\NN}$,  de sous-variétés faiblement spéciales de $S$, on est confronté à trois difficultés principales.
\begin{itemize}
\item Si $\mu_{\infty}$ est une limite faible de la suite $(\mu_{Z_n})_{n\in \NN}$, alors en général $\mu_{\infty}$ n'est pas une mesure associée à une sous-variété faiblement spéciale $Z_{\infty}$.
\item Si $(\mu_{Z_n})_{n\in \NN}$ converge faiblement vers $\mu_{\infty}$, alors en général le support de $\mu_{\infty}$ ne contient pas les $Z_n$ pour $n$ assez grand. 
\item Dans le cas non tendu, la suite~$(\mu_{Z_n})_{n\in \NN}$  peut converger faiblement vers la mesure nulle.
\end{itemize}

Pour simplifier l'exposition, nous décrivons la méthode dans le cas tendu.

Nous contournons la première difficulté à l'aide de raisonnements classiques en dynamique homogène combinés avec un \emph{"baby case"} du Théorème d'Ax-Lindemann hyperbolique (Proposition \ref{BAL}). Nous partons de la description~\eqref{Eq-fs}, 
$$
Z_n=\Gamma\backslash\Gamma \cdot \HH_n(\RR)^+\cdot x_n
$$
pour un $\Q$-sous-groupe r\'eductif $\HH_n$ de $\G$ et un point $x_n$, que l'on peut choisir dans un ensemble fondamental~$\cF\subset X$ convenable
fixé. Quand la suite $(Z_n)_{n\in\NN}$ est tendue, la théorie de Ratner permet de montrer qu'en passant éventuellement à une sous-suite, $\mu_n$ converge étroitement vers une mesure \emph{homogène}~$\mu_{\infty}$ et
la suite $(x_n)_{n\in \N}$ converge vers $x_{\infty}\in \cF$. De plus
$$
Z'_{\infty}:={\rm Supp}(\mu_{\infty})= \Gamma_{H_\infty}\backslash \HH_\infty(\RR)^+\cdot x_\infty
$$
pour un sous-groupe algébrique $\HH_{\infty}$ de $\G$ tel que $\HH_n\subset \HH_{\infty}$ pour tout $n$ assez grand. Le \emph{ "baby case"} du Théorème d'Ax-Lindemann hyperbolique nous assure que la clôture de Zariski $Z_{\infty}$ de $Z'_{\infty}$ est une sous-variété faiblement spéciale de $S$. En particulier si une sous-variété algébrique $V$ de $S$ contient les $Z_n$, alors elle contient~$Z_{\infty}$.

L'apport principal de se travail est le résultat suivant qui permet de surmonter la deuxième difficulté.   
L'essence de cet énoncé réside en la confrontation de l'inégalité de dimensions
\[
\dim(Z'_\infty)\geq \limsup (\dim(Z_n))_{\N},
\]
évidente dans l'univers de la dynamique homogène à la Ratner, mise en regard de l'inégalité en sens inverse
\[
\dim(Z'_\infty\cap V)\leq \limsup (\dim(Z'_n)\cap V)_{\N},
\]
naturelle dans le monde des famille définissables dans une théorie o-minimale, où la géométrie varie de façon modérée.


\begin{theorem}\label{T2}
Soit $V$ une sous-variété algébrique irréductible de $S$ qui contient une suite  $(Z_n)_{n\in\N}$ de sous-variétés faiblement spéciales. 

En passant éventuellement à une sous-suite extraite, il existe des écritures
\[
Z_n=\Gamma\backslash\Gamma \HH_n(\RR)^+\cdot x_n
\] pour des sous-groupes semi-simples $\HH_n$ de $\G$ de telle sorte qu'il existe un sous-groupe algébrique~$\HH_{\infty}$ sans caractère rationnel  tel que pour $n$, $\HH_n\subset \HH_{\infty}$ et tel que~$V$ contient les sous-espaces homogènes 
\[
Z'_n:=\Gamma\backslash\Gamma\cdot \HH_\infty(\RR)^+\cdot x_n.
\]
\end{theorem}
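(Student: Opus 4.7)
La démarche suit le canevas général décrit en~\ref{strategie}: une phase dynamique à la Ratner/Mozes-Shah fournit un groupe limite~$\HH_\infty$, puis une confrontation avec l'énoncé o-minimal de~\cite{ActesDries} sur la dimension des limites de Hausdorff dans une famille définissable donne l'inclusion~$Z'_n\subset V$ annoncée.

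\emph{Étape 1 — Mise en place dynamique.} Quitte à remplacer chaque~$\HH_n$ par son groupe dérivé, on peut supposer les~$\HH_n$ semi-simples dans l'écriture~\eqref{Eq-fs}; on choisit~$x_n$ dans un ensemble fondamental~$\cF\subset X$ fixé. En extrayant, on suppose~$x_n\to x_\infty\in\overline{\cF}$ et~$\mu_{Z_n}\to\mu_\infty$ étroitement — dans~$S$ si la suite est tendue, sinon dans une compactification de Satake par~\cite{DGU2}. Le théorème de Mozes-Shah fournit alors un~$\QQ$-sous-groupe algébrique~$\HH_\infty\subset\G$ sans caractère rationnel, avec~$\HH_n\subset\HH_\infty$ pour~$n$ assez grand (on élimine par extraction les indices restants), et
\[
\supp(\mu_\infty)=Z'_\infty:=\Gamma\bs\Gamma\cdot\HH_\infty(\RR)^+\cdot x_\infty,
\]
de dimension~$d:=\dim Z'_\infty$.

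\emph{Étape 2 — Deux inégalités de dimension.} Côté dynamique, la convergence~$x_n\to x_\infty$ et la fixité de~$\HH_\infty$ entraînent que les orbites~$Z'_n:=\Gamma\bs\Gamma\cdot\HH_\infty(\RR)^+\cdot x_n$ convergent vers~$Z'_\infty$ au sens de Hausdorff sur tout compact. De plus, puisque~$Z_n\subset Z'_n$ (via~$\HH_n\subset\HH_\infty$) et que la convergence faible~$\mu_n\to\mu_\infty$ garantit que tout ouvert rencontrant~$\supp(\mu_\infty)=Z'_\infty$ rencontre~$Z_n$ à partir d'un certain rang, la limite de Hausdorff des~$Z_n$ contient~$Z'_\infty$, donc l'égale, et est donc de dimension~$d$. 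Côté o-minimal, la famille~$\{V\cap Z'_n\}_n$ est extraite d'une famille définissable dans~$\RR_{\an,\exp}$, en prenant un relèvement définissable de~$V$ à~$\cF\subset X$ et en paramétrant les morceaux~$\HH_\infty(\RR)^+\cdot x\cap\cF$ par~$x\in\cF$; l'énoncé principal de~\cite{ActesDries} fournit alors
\[
\dim\bigl(\lim_{\mathrm{Haus}}(V\cap Z'_n)\bigr)\leq\limsup_n \dim(V\cap Z'_n).
\]

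\emph{Étape 3 — Confrontation et obstacle principal.} Supposons par l'absurde que~$Z'_n\not\subset V$ pour une infinité d'indices. L'algébricité de~$V$ et l'irréductibilité analytique de l'orbite complexe~$Z'_n$ (sous-variété d'un espace hermitien) forcent~$\dim(V\cap Z'_n)<d$. Mais~$Z_n\subset V\cap Z'_n$ tend vers~$Z'_\infty$ au sens de Hausdorff, donc~$d\leq\limsup\dim(V\cap Z'_n)\leq d-1$: contradiction. Par conséquent~$Z'_n\subset V$ pour~$n$ assez grand, et une dernière extraction établit l'énoncé. La difficulté principale réside dans la mise en famille uniformément définissable des~$\{V\cap Z'_n\}$ — à savoir le passage propre entre un relèvement à~$\cF\subset X$ et la projection dans~$S$, ainsi que la vérification que l'hypothèse technique de~\cite{ActesDries} s'applique bien à cette famille — à quoi s'ajoute, dans le cas non tendu, le contrôle de la perte de masse à la frontière via~\cite{DGU2}.
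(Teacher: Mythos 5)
Votre schéma reprend bien l'architecture du papier (Mozes--Shah pour produire $\HH_\infty$, puis semi-continuité o-minimale de la dimension le long d'une limite de Hausdorff pour forcer $\dim(V\cap Z'_n)=d$ et conclure par prolongement analytique), avec une variante légitime au point clef : vous obtenez la minoration $\dim(\lim_{\mathrm{Haus}}(V\cap Z'_n))\geq d$ en montrant que cette limite contient $Z'_\infty$ via l'accumulation des supports, là où le papier (Théorème~\ref{Proposition mesures}) passe par les mesures : la limite $\nu'_\infty$ des restrictions $\nu_i\restriction_{V\cap Z'_n}$ est concentrée sur la limite de Hausdorff $L$ tout en étant dominée par $\nu_\infty$, qui ne charge aucun définissable de dimension $<d$. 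Dans le cas tendu les deux mécanismes sont essentiellement équivalents ; celui du papier a l'avantage de survivre à la variante à paramètres. En revanche, la difficulté que vous signalez sans la résoudre --- la mise en famille définissable de $V\cap Z'_n$ --- n'est pas anodine : le relèvement de $Z'_n$ à un ensemble fondamental fait intervenir $\Gamma\cdot\HH_\infty(\RR)^+\cdot x_n$ et non la seule orbite $\HH_\infty(\RR)^+\cdot x_n$, et c'est précisément l'énoncé de finitude de la section~\ref{sec6} (un nombre fini de translatés $\gamma H_\infty g$ rencontrent une partie bornée, uniformément en $g$ borné, Corollaire~\ref{dc}) qui rend la famille définissable ; sans lui, votre argument de liminf de Hausdorff ne voit qu'un morceau d'orbite et peut manquer les points de $Z_n$ qui s'accumulent sur $Z'_\infty$ via d'autres translatés.

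Le vrai trou est le cas non tendu, que l'énoncé de~\ref{T2} n'exclut pas. Si de la masse part à l'infini, $\mu_\infty$ vit sur le bord de la compactification de Satake : il n'y a plus de $Z'_\infty\subset S$ de dimension $d$, la limite de Hausdorff de $Z_n\cap K$ dans un compact $K$ de $S$ peut être vide, et vos Étapes 2 et 3 s'effondrent. Le papier contourne cela par la décomposition DGU $g_i=h_i a_i k_i$ : on renormalise par $a_i$ pour que les mesures $\widehat{\mu}_i=\mu_{\HH_i}\cdot h_i$ convergent \emph{sans} perte de masse dans $\Gamma_P\backslash H_P$, on remplace la famille constante $\theta^{-1}(\wV)\cap U$ par la famille à paramètres $\bigl(\theta^{-1}(\wV)\cap(U\cdot a)\bigr)\cdot a^{-1}$ (Théorème~\ref{Charnière DGU}), et la définissabilité de cette famille exige celle de l'uniformisation sur un ensemble de Siegel complet $V_P A^+_{\geq t}K_\infty$, donc la structure $\RR^{an,\exp}$ (Proposition~\ref{defpi}). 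Aucun de ces ingrédients ne figure dans votre proposition au-delà d'une mention de~\cite{DGU2} ; en l'état, votre preuve n'établit le théorème que sous l'hypothèse supplémentaire que la suite $(Z_n)$ est tendue.
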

L'observation capitale est que le sous-ensemble homogène~$Z'_n$ contient la sous-variété faiblement spéciale~$Z_n$.
 Le groupe~$\HH_\infty$ étant généralement strictement plus grand que les~$\HH_n$, l'inclusion de $Z_n$ dans $Z'_n$ peut être stricte.

On peut encore utiliser le ``baby case'' du Théorème d'Ax-Lindemann hyperbolique.
\begin{corollaire}
La sous-variété algébrique~$V$ contient les sous-variétés faiblement spéciales~$\tilde{Z}_n=\overline{Z'_n}^{Zar}$.
\end{corollaire}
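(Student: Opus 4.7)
Le plan de démonstration est direct à partir du Théorème~\ref{T2} et du ``baby case'' d'Ax-Lindemann hyperbolique (Proposition~\ref{BAL}). L'inclusion topologique $\tilde{Z}_n\subseteq V$ est purement formelle: par le Théorème~\ref{T2}, la sous-variété $V$ contient le sous-ensemble homogène $Z'_n=\Gamma\backslash\Gamma\cdot\HH_\infty(\RR)^+\cdot x_n$; comme $V$ est Zariski-fermée dans $S$, elle contient \emph{a fortiori} la clôture de Zariski $\tilde{Z}_n=\overline{Z'_n}^{\Zar}$. Aucun argument supplémentaire n'est nécessaire pour cette étape, la moitié topologique de l'énoncé étant immédiate.

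Le contenu substantiel du corollaire réside donc dans le caractère faiblement spécial des $\tilde{Z}_n$. Or par construction, $Z'_n$ est un sous-ensemble homogène de $S$ au sens de~\eqref{Eq-HS}, attaché au sous-groupe algébrique $\HH_\infty$ dépourvu de caractère rationnel (conclusion du Théorème~\ref{T2}). Le ``baby case'' d'Ax-Lindemann hyperbolique s'applique précisément à ce type de configuration: il affirme que la clôture de Zariski dans $S$ d'un tel sous-ensemble homogène est faiblement spéciale, s'écrivant sous la forme $[\widetilde{\HH}_n,x_n]$ pour un $\QQ$-sous-groupe réductif $\widetilde{\HH}_n\supseteq\HH_\infty$ de $\G$ convenablement choisi (celui qui, via son orbite réelle sur~$x_n$, produit le plus petit sous-espace hermitien symétrique contenant~$\HH_\infty(\RR)^+\cdot x_n$ et dont la descente à $\Gamma\backslash X$ soit algébrique).

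Il n'y a pas, à proprement parler, d'obstacle principal dans ce corollaire, qui juxtapose deux résultats acquis antérieurement. Le seul point de vigilance est d'invoquer la formulation de la Proposition~\ref{BAL} suffisamment générale pour embrasser les sous-groupes algébriques sans caractère rationnel, et pas uniquement le cas réductif classique. Dans le cas contraire, il suffit d'intercaler une décomposition de Levi $\HH_\infty=\Lbold_\infty\ltimes\uRad(\HH_\infty)$ et d'observer que l'adhérence de Zariski de l'orbite de~$x_n$ sous~$\HH_\infty(\RR)^+$ coïncide avec celle sous~$\Lbold_\infty(\RR)^+$, de sorte que l'on est ramené au cas réductif sans perte.
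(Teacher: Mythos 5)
Votre démonstration est correcte et coïncide avec celle, implicite, du texte : le Théorème~\ref{T2} fournit l'inclusion $Z'_n\subseteq V$, la Proposition~\ref{BAL} — énoncée pour les sous-groupes de type $\mathcal{H}$, ce qui couvre le groupe $\HH_\infty$ issu de la théorie de Mozes--Shah — montre que $\overline{Z'_n}^{Zar}$ est faiblement spéciale, et $V$ étant algébrique donc Zariski-fermée, elle contient cette clôture. Votre digression finale sur la décomposition de Levi est donc superflue ici, la Proposition~\ref{BAL} s'appliquant déjà telle quelle au groupe $\HH_\infty$.
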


Le Théorème~\ref{T1} se démontre en
 partant d'une suite de $Z_n$ qui sont maximales parmi les sous-variétés faiblement spéciales contenues dans~$V$.
 On obtient alors que $\tilde{Z}_n=Z_n$ ce qui force
$\HH_n=\HH_\infty$ pour tout $n$. La conclusion recherchée s'obtient avec la description des produits faiblement spéciaux 
décrit dans \ref{intro produits} en posant~$\HH_1=\HH_\infty$ et~$\HH_2=Z_\G(\HH_1)$.

\subsection{Quotients arithmétiques  généraux}\label{QAG}

Le c\oe{}ur technique de ce travail est suffisamment souple pour s'appliquer à des espaces localement homogènes plus généraux
que les variétés arithmétiques et en particulier aux domaines de périodes associés aux  $\ZZ$-variations de structures de Hodge.

On fixe encore un groupe semi-simple $\G$, un réseau arithmétique sans torsion~$\Gamma\subset \G(\Q)$ et un sous-groupe compact $M$ de $G=\G(\R)^+$.
Notons que nous ne faisons pas d'hypothèses sur $M$, on peut prendre $M= \{1\}$ ou $M=K_{\infty}$. Il n'y a plus non plus d'hypothèses hermitiennes sur l'espace symétrique de $G$, on peut par exemple penser à $\G=\SL_n$.
Alors le \emph{quotient arithmétique}
$$
S=S_{\Gamma,G,M}:=\Gamma\backslash G/M
$$
a une structure de variété analytique réelle.

Il est possible, pour chaque choix de compact maximal~$K_\infty$ contenant~$M$, de munir naturellement~$S_{\Gamma,G,M}$ d'une structure d'espace semi-algébrique\footnote{
Elle est uniquement déterminée par: si~$\cF$ est un ensemble (resp. domaine) fondamental de l'action de~$\Gamma$ sur~$G$ union finie de domaines de Siegel rationnels (et relatifs à~$K_\infty$) semi-algébriques dans~$G$, alors la surjection (resp. bijection)~$\cF/M \to S$ est semi-algébrique. Cela dépend fortement du choix de~$K_\infty$.
} réel sur $S$, comme observé par~\cite[Th.~1.1]{BKT}. Pour toute structure~$o$-minimale, telle  $\RR^{an}$ et $\RR^{an,exp}$,
nous aurons une notion de sous-variétés différentielle définissable dans cette structure.

Pour travailler dans le cadre de la dynamique homogène à la Ratner, on définit classiquement la classe $\cHH$ de $\Q$-sous-groupes algébriques de $\G$. 
Un sous-groupe algébrique~$\HH\leq \G$ défini sur $\Q$ est dit de type $ \mathcal{H}$ si son radical $\mathbf{R}_H$ est unipotent et si $\HH/\mathbf{R}_H$ est un produit presque direct de sous-groupes $\Q$-simples dont les points réels ne sont pas compacts. 
 
Une \emph{sous-ensemble homogène de $S$} désigne un sous-ensemble de la forme
$$
Z=\Gamma\backslash \Gamma \cdot H\cdot g\cdot M\subseteq S
$$
pour un groupe $\HH$ de type $\cHH$ et $g\in G$, avec $\Gamma_H=\Gamma\cap H$. 
Alors ces sous-ensembles homogènes sont à la fois sous-ensembles analytiques et, d'après le Théorème 1.1 de \cite{BKT}, sont semi-algébriques.

Nous obtenons dans ce cadre le résultat suivant:

\begin{theorem}\label{teo1.6}
 Soit~$V$ une sous-variété analytique définissable de~$S$ dans une structure~$o$-minimale.


Soit $(Z_n)_{n\in\N}$ une suite de sous-ensembles homogènes de $S$ contenus dans~$V$. En passant éventuellement à une sous-suite infinie extraite, il existe des écritures
\begin{equation}\label{theo1.6eq1}
Z_n=\Gamma\backslash\Gamma \HH_n(\RR)^+\cdot g_n\cdot M
\end{equation}
pour des sous-groupes $\HH_n$ de type $\cHH$ de  $\G$ et un sous-groupe algébrique~$\HH_{\infty}$ de type $\cHH$ telles que pour $n$, $\HH_n\subset \HH_{\infty}$ et telles  que  $V$ contient les sous-espaces homogènes
\begin{equation}\label{theo1.6eq2}
Z'_n:=\Gamma\backslash\Gamma \HH_\infty(\RR)^+\cdot g_n\cdot M.
\end{equation}
\end{theorem}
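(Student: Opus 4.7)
The plan is to combine the homogeneous dynamics machinery recalled in~\ref{sApproche} with the o-minimal behaviour of definable families, as previewed in~\ref{strategie}, and to carry out the dimension confrontation in the broader framework where $\HH$ is only required to be of class $\cHH$ and $V$ is merely definable (not algebraic). Throughout, I would attach to each $Z_n$ its canonical homogeneous probability measure $\mu_n=\mu_{Z_n}$, and work with an initial presentation $Z_n=\Gamma\backslash\Gamma\HH_n(\RR)^+\cdot g_n\cdot M$ where $g_n$ lies in a fixed semi-algebraic fundamental set $\cF\subseteq G$.

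The first step is dynamic. After extraction, I would apply Ratner together with the Mozes--Shah theorem and its extension by Daw--Gorodnick--Ullmo~\cite{DGU2} recalled in~\ref{sDGU}, analysing $\mu_n$ in the maximal Satake compactification $\overline{S}$. This produces, after possibly modifying each $g_n$ by an element of $\Gamma\times M$, a narrow limit whose restriction to $S$ is a homogeneous probability measure $\mu_\infty$ with support $Z'_\infty=\Gamma\backslash\Gamma\HH_\infty(\RR)^+\cdot g_\infty\cdot M$ for some $\HH_\infty$ of class $\cHH$; moreover $g_n\to g_\infty$ in $G$ and $\HH_n\subset\HH_\infty$ for $n$ large. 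By the standard consequence of equidistribution plus tightness, the underlying sets $Z_n$ Hausdorff-converge (inside $S$) to $Z'_\infty$, so in particular $Z'_\infty\subseteq V$ since $V$ is closed and contains each $Z_n$. After a further subsequence stabilising the generic stabiliser, one has the common dimension $\dim Z'_n=\dim Z'_\infty\geq\limsup\dim Z_n$, the Ratner/Mozes--Shah half of the dimension confrontation.

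The second step is o-minimal and is the heart of the argument. Using the semi-algebraic structure on $S$ given by~\cite[Th.~1.1]{BKT} together with a semi-algebraic section over the Siegel set $\cF$, the assignment $g\mapsto \Gamma\backslash\Gamma\HH_\infty(\RR)^+\cdot g\cdot M$ yields a definable family of subsets of $S$, so that intersecting with the definable set $V$ one obtains a definable family of sets $V\cap Z'_g$, specialised at the $g_n$ to $V\cap Z'_n$. Invoking the lemma of~\cite{ActesDries} which bounds the dimension of the Hausdorff limit of a sequence extracted from a definable family, one obtains
\[
\dim(Z'_\infty\cap V)\leq \limsup\dim(Z'_n\cap V).
\]
Combining this with $Z_n\subseteq V\cap Z'_n$, with the inclusion $Z'_\infty\subseteq V$ from the first step (which gives $\dim(Z'_\infty\cap V)=\dim Z'_\infty$), and with $\dim Z'_n=\dim Z'_\infty$, one gets $\dim(V\cap Z'_n)=\dim Z'_n$ for all $n$ large enough. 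Since $Z'_n$ is the orbit of the connected group $\HH_\infty(\RR)^+$, hence a connected real-analytic submanifold of pure dimension $\dim Z'_\infty$, and since $V$ is a closed analytic subset of $S$, the equality of dimensions upgrades to the set-theoretic inclusion $Z'_n\subseteq V$, which is the desired conclusion~\eqref{theo1.6eq2}.

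The step I expect to require the most care is the compatibility between the measure-theoretic convergence in $\overline{S}$ produced by Ratner--Mozes--Shah--DGU and the Hausdorff convergence of sets in $S$ needed to feed the o-minimal dimension lemma. In the tight case the two are equivalent up to subsequences, but in the non-tight case one must track how mass escapes to the boundary strata of $\overline{S}$ and verify that, after the modification of the $g_n$ by $\Gamma\times M$, the whole family $(g_n)$ still lies in a fixed definable set of Siegel type, so that the $Z'_n$ really form a single definable family inside $S$ and the Hausdorff limit relevant to~\cite{ActesDries} still coincides with $Z'_\infty$. Once this is arranged, the dimension confrontation above closes the proof.
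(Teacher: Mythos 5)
Your argument for the tight case is, at bottom, a ``downstairs'' reorganization of the paper's proof: the same two ingredients (the Mozes--Shah inequality $\dim \HH_\infty\geq\limsup\dim\HH_n$ against the o-minimal semicontinuity of dimension under Hausdorff limits of a definable family) and the same endgame (equal dimensions $\Rightarrow$ a nonempty open subset of the orbit via Lemme~\ref{Lemme dimension interieur}, then analytic continuation). Even there, two points are glossed over. First, the claim that the $Z_n$ Hausdorff-converge \emph{to} $Z'_\infty$ is false in general --- it is one of the three difficulties the introduction lists explicitly; what is true (Lemme~\ref{lemme faible vietoris}, or simply $\mu_\infty(V)\geq\limsup\mu_n(V)=1$ for $V$ closed) is only $\operatorname{Supp}(\mu_\infty)\subseteq V$, which is fortunately all your anchor $\dim(Z'_\infty\cap V)=\dim Z'_\infty$ requires. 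Second, to feed $(V\cap Z'_{g_n})_n$ into the lemma of \cite{ActesDries} you must both realize $(Z'_g)_g$ as a definable family --- which needs the reduction-theoretic finiteness of \ref{sec6} and the definability of the uniformization on Siegel sets, not only \cite{BKT} --- and truncate everything to a fixed compact definable window, since that lemma concerns subsets of a fixed compact while the $Z'_n$ are non-compact; the window must be chosen to meet $Z'_\infty$ in full dimension. The paper does precisely this upstairs in $G$, with $K=\overline{U}$ and the family $(K\cap\Gamma H_\infty g)_g$.

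The genuine gap is the non-tight case, which your scheme cannot reach as written. When mass escapes, the objects anchoring your dimension confrontation do not exist: the $g_n$ do not converge in $G$ (their $A^+$-components diverge along the parabolic furnished by Dani--Margulis and \cite{DGU2}), the restriction to $S$ of the limit of the $\mu_n$ in the Satake compactification may be the zero measure, and there is no limit orbit $Z'_\infty\subseteq S$ with which to write $\dim(Z'_\infty\cap V)=\dim Z'_\infty$. ``Verifying that the Hausdorff limit still coincides with $Z'_\infty$'' is therefore not a verification to be carried out but the step that fails. The paper's substitute is the parametrized hinge theorem (Théorème~\ref{Charnière DGU}): write $g_n=h_na_nk_n$ in the Langlands decomposition of the relevant parabolic, use that only the $H_P$-parts $\widehat{\mu}_n=\mu_{\HH_n}\cdot h_n$ converge (Mozes--Shah inside $\Gamma_P\backslash H_P$), replace the fixed window $U$ by the translated windows $U\cdot a_n$ --- Lemme~\ref{bonK} guarantees these retain a definite fraction of the mass of $\mu_n$ --- and replace the constant definable family by the translated family $\bigl((\theta^{-1}(\wV)\cap (U\cdot a))\cdot a^{-1}\bigr)_{a}$, whose definability rests on Proposition~\ref{defpi}. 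Without this device, or an equivalent one, your proposal proves the theorem only under the additional hypothesis that the sequence $(Z_n)$ is tight.
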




On verra que cet énoncé se ramène aisément du cas $M=\{1\}$.

Ceci dit une sous-classe de ces variétés $S_{G,K,M}$ jouent un rôle important comme domaines de périodes associés aux variétés algébriques complexes qui portent une $\ZZ$-variation de structures de Hodge. Notons que dans ces cas la structure d'espace $\R^{alg}$-définissable sur $S$ ainsi que ses extensions en des structures $\R^{an,exp}$ sont alors uniquement définies car il n'y a  qu'un unique choix de compact $K_{\infty}$ de $G$ contenant $M$. Les conséquences de ce texte dans ce cadre permettent de  complêter les travaux de Jiaming Chen ~\cite{C21}. L'analogue du Théorème $\ref{T1}$ est obtenu dans l'appendice~\ref{apendix} de ce texte écrit en collaboration avec Jiaming Chen.

\subsection{Organisation du texte}

La section \ref{sec2} donne des rappels de base de la théorie o-minimale. On souligne la notion de \emph{familles définissables} et on introduit les structures de variétés définissables dans une théorie o-minimale. 

La section~\ref{sec3} consiste en l'énonciation d'une propriété, trouvée dans~\cite{Actes Dries}, du comportement de la dimension d'une suite de compacts définissables dans un compact fixe de $\R^n$, au passage à la limite de Hausdorff, \emph{lorsque ces compacts sont définissables \og{}en famille\fg{}}. Alors \og{}la dimension ne peut que chuter à la limite\fg{}.

La section \ref{sec4} introduit les notions de théorie de la mesure qui nous serons utiles dans la suite. On y montre le résultat clef Théorème \ref{Proposition mesures}, technique mais taillé à nos besoin ultérieurs. C'est une élaboration directe depuis la section précédente, incorporant 
des suites étroitement convergentes de mesures dont le support est comparé à des suites de compacts définissables en famille,
et dont les valeurs sur des parties définissables ont de bonnes propriétés dimensionnelles.

La section \ref{sec5} récapitule les outils de dynamique homogène sur les quotients arithmétiques généraux: théorie de Ratner, travaux de Mozes-Shah, de Daw-Gorodnik-Ullmo, ainsi que sur les espaces localement symétriques hermitiens: mesures canoniques des sous-variétés faiblement  spéciales et des sous-ensembles homogènes, \og{}baby case\fg{} du Théorème d'Ax-Lindemann hyperbolique, définissabilité des applications d'uniformisation.
En outre nous prouvons en~\ref{sec6}, faute de référence, un énoncé de finitude en théorie de la réduction, pour en déduire que certaines familles de sous-ensembles de $G$ sont des \emph{familles} définissables. 

C'est à partir de la section \ref{sec8} que débute notre mariage des idées issues de la dynamique homogène et des propriétés des familles définissables dans une théorie o-minimale. En instanciant l'énoncé technique de~\ref{sec4} nous obtenons, d'abord dans le cadre des quotients arithmétiques généraux, le Théorème \ref{teo1.6}. Le Théorème~\ref{T2} s'obtient par ricochet, en nous restreignant au cadre des espaces localement symétrique hermitien, et en utilisant la définissabilité de l'uniformisation. Nous terminons par la section \ref{sec9} avec la preuve du Théorème \ref{T1}, en rédigeant l'argument  esquissée à la fin de~\ref{strategie}.

Enfin, l'appendice écrit en collaboration avec Jiaming Chen étend le Théorème \ref{T1} dans les domaines de périodes associés aux $\ZZ$-variations de structures de Hodge polarisés sur des variétés complexes quasi-projectives.

\section{Préliminaires}\label{sec2}
\subsection{o-minimalité}

La nouveauté de cet article est l'introduction d'un argument de nature o-minimale dans les questions d'équidistribution de mesures homogènes. Nous rappelons ici quelques définitions de base de la théorie utiles à la compréhension du texte. Plusieurs références classiques sur le sujet
peuvent aider le lecteur notamment \cite{Dries}, \cite{ActesDries}.

Une \emph{structure o-minimale} est la donnée pour chaque entier $n$,  d'une classe $\mathcal{S}_n$ de sous-ensembles de $\R^n$, dit \emph{définissables}, qui partagent beaucoup de propriétés avec les ensembles semi-algébriques, à savoir ceux définis par inéquations polynomiales: tel le disque unité obtenu par~$x^2+z^2\leq1$. Queques propriétés fondamentales sont:
\begin{itemize}
\item les sous-ensembles semi-algébriques sont définissables dans toute structure o-minimale;
\item la plupart des opérations courantes (union finie, complémentaire, produit cartésien fini, ...) conservent la définissabilité;
\item l'image d'un ensemble définissable par projection sur certaines coordonnées est définissable;
\item les composantes connexes d'un ensemble définissable sont en nombre fini.
\end{itemize}
On définit alors une \emph{application définissable} comme une application~$f:A\to B$ entre deux ensembles définissables dont le graphe, dans~$A\times B$ est définissable.

On combine flexibilité: toute partie décrite par une formule logique du premier ordre détermine un objet définissable; et rigidité: la condition de finitude limite la complexité des ensembles considérés. 
La notion d'ensemble définissable dans une structure o-minimale est suffisamment riche pour que beaucoup de notions de géométrie algébrique réelle s'appliquent:
Trois exemples de structures o-minimales qui nous importent sont
\begin{itemize}
\item celle, notée~$\R^{\text{alg}}$, des sous-ensembles semi-algébriques;
\item celle, notée~$\R^{\text{an}}$, engendrée par les graphes de restrictions à~$[0;1]^n\subseteq\R^n$ de fonctions sur $\R^n$ qui sont analytiques réelles sur un voisinage de $[0,1]^n$.
\item celle, notée~$\R^{\text{an,$\exp$}}$, engendrée par la précédente et le graphe de la fonction exponentielle. 
\end{itemize}

\begin{definition}
Soit $B\subset \R^r$ un ensemble définissable. Une famille $(A_b)_{b\in B}$ d'ensembles définissables $A_b\subset \R^s$ indexée par $B$ est une {\bf famille définissable} si l'ensemble
$$
\tilde{A}:=\cup_{b\in B} (\{b\}\times A_b)\subset \R^r\times \R^s
$$
est définissable. 

Une suite $(B_n)_{n\in \N}$ de sous-ensembles définissables de $\R^s$ est dite {\bf extraite de la famille définissable} $(A_b)_{b\in B}$ si pour tout $n\in \N$ il existe $b_n\in B$ tel que $B_n=A_{b_n}$.
\end{definition}

\subsection{Dimension}
La notion de \emph{dimension} 
$$
\dim(A)\in\{-\infty;0;\ldots;n\}
$$
 d'un ensemble définissable
$A\subseteq \R^n$ (\cite[]{Coste2}, \cite[]{Dries}, \cite[]{Actes Dries}) sera cruciale dans la suite.  La dimension des ensembles définissables dans une structure o-minimale vérifie la plupart des propriétés attendues . 
On suit la convention~$\dim(\emptyset)=-\infty$ (indépendamment de~$n$). Pour~$A\subseteq B\subseteq \R^n$ définissables, nous avons notamment, voir~\cite[Prop. 3.17, Th. 3.22]{Coste2},
\begin{equation}\label{dimension facts}
\dim(A)\leq \dim(B)\qquad \dim(\partial(A))\leq \dim(A)-1\text{ où }\partial(A):=\overline{A}\smallsetminus A.
\end{equation}

Faute de référence, nous terminons cette section par  deux énoncés, qui découlent de~\eqref{dimension facts}.  Le premier permettra de traduire en propriétés topologiques des arguments sur les dimensions. 
\begin{lemme}\label{Lemme dimension interieur}
Soit~$A\subseteq B$ deux ensembles définissables non vides de même dimension.
Alors~$A$ contient un ouvert non vide de~$B$.
\end{lemme}
\begin{proof}Prouvons la contraposée. Soit ainsi~$A\subseteq B$ d'intérieur vide dans~$B$. Autrement vu,~$U=B\smallsetminus A$ est dense dans~$B$.
Donc~$A\subseteq \overline{U}\smallsetminus U=\partial U$ (adhérence et bord pris dans~$B$ ou dans~$\R^n$ tel que~$B\subseteq\R^n$ peu importe). Enfin, utilisant~\eqref{dimension facts},
\[
\dim A\leq \dim \partial U\leq \dim(U)-1\leq \dim(B)-1.
\]
Comme~$B$ est non vide,~$\dim(B)$ est finie et~$\dim(B)-1<\dim(B)$. Finalement~$\dim(A)\neq \dim(B)$.
\end{proof}
Du résultat suivant, seule la toute dernière conclusion nous importe.
\begin{lemme}\label{definissable est borelien}
Soit~$n\in\N$ et soit~$A\subseteq\R^n$ une partie définissable.
Alors 
\[
A=\overline{A}\smallsetminus\left(\overline{\partial(A)}\smallsetminus\left(\overline{\partial(\partial (A))}\smallsetminus\left(\ldots\left(\overline{\partial^{\dim(A)}A}\smallsetminus\emptyset\right)\ldots\right)\right)\right),
\]
soit, en termes de fonctions caractéristiques,
\[
\chi_A=\chi_{\overline{A}}-\chi_{\overline{\partial A}}+\chi_{\overline{\partial\partial A}}-\ldots =\sum^{\dim(A)}_{c=0} (-1)^c\cdot \chi_{\overline{\partial^{c}A}}.
\]
En particulier~$A$ est dans la sous-algèbre d'ensembles engendrée par les parties définissables fermées~$B\subseteq \R^n$ de dimension~$\dim(B)\leq\dim(A)$.  

A fortiori~$A$ est dans la tribu de Borel de~$\R^n$.
\end{lemme}

Nous ne ferons qu'esquisser une démonstration.

\noindent On a noté par récurrence~$\partial^cA=\partial(\partial^{c-1}A)$ et~$\partial^0A=A$. Par passage à la clôture, on a la chaine d'inclusions.
$$
\overline{A}\supseteq\overline{\partial(A)}\supseteq \overline{\partial^2 A}\supseteq\ldots.
$$

Le raisonnement s'opère  par récurrence sur~$\dim(A)\in\{-\infty\}\cup\N$. On  a $A=\overline{A}\smallsetminus B$ avec~$B=\partial  A$ satisfaisant~$\dim(B)\leq \dim(A)-1$. On répète l'opération en choisissant~$A=B$. Pour~$B=\partial^{\dim(A)+1}A$, on trouvera~$\dim(B)\leq \dim(A)-(\dim(A)+1)<0$, donc~$\dim(B)=-\infty$ et $B=\emptyset$.

\subsection{Structure de variété définissable dans une théorie o-minimale}\label{Vom}

Soit $V$ une variété différentiable de dimension $n$. Soit $\mathcal{S}=(\mathcal{S}_n)_{n\in \N}$ une structure o-minimale. On dit que $V$ est définissable dans $\mathcal{S}$, si il existe un atlas fini de cartes  de $V$ tel que les applications de transitions soient définies dans $\mathcal{S}$. On dispose alors d'un recouvrement ouvert $(U_i)_{i\in I}$ de $V$, pour un ensemble fini d'indices $I$,  et pour tout $i\in I$ d'applications de transitions
$$
\phi_i: U_i\longrightarrow \RR^n
$$
telles que $\phi_i(U_i)$ est définissable dans $\mathcal{S}$ et telles que les applications de changement de coordonnées 
$$
\phi_{i,j}:=\phi_i  \phi_j^{-1} :  \phi_j(U_i\cap U_j) \longrightarrow \phi_i(U_i\cap U_j)
$$
soient définissables dans $\mathcal{S}$. Le Théorème 1.1 de \cite{BKT} montre que tout quotient arithmétique
$$
S_{\Gamma, G,M}= \Gamma \backslash G/M
$$
 admet une structure de variété $\RR^{alg}$-définissable. On dispose donc d'un atlas fini de   $S_{\Gamma, G,M}$ tel que les applications de changement de coordonnées sont semi-algébriques. On dispose alors pour toute structure $o$-minimale $\mathcal{S}$ d'une structure de variété  $\mathcal{S}$-définissable sur $S_{\Gamma, G,M}$ 
 étendant la structure $\RR^{alg}$. Explicitement un sous-ensemble $V$ de $S_{\Gamma, G,M}$ est définissable dans $\mathcal{S}$ si pour tout $i\in I$, $\phi_i (U_i\cap V)$ est définissable dans $\mathcal{S}$ et si pour tout $(i,j)\in I^2$, la restriction de $\phi_{i,j}$ à
$ \phi_j(U_i\cap U_j\cap V) $ est définissable dans $\mathcal{S}$.

Par ailleurs, Bakker, Klingler et Tsimerman (\cite{BKT} appendix A) montrent que toute variété analytique réel compact à coin est munie naturellement d'une structure de variété $\RR^{an}$-définissable. La compactification de Borel-Serre $S_{\Gamma, G,M}^{BS}$ de $S_{\Gamma, G,M}$ fournit ainsi une structure  de variété $\RR^{an}$-définissable sur $S_{\Gamma, G,M}^{BS}$ et par restriction une structure  de variété $\RR^{an}$-définissable sur $S_{\Gamma, G,M}$.   Le Théorème 1.1 de \cite{BKT} assure alors que cette structure coïncide avec la structure $\RR^{an}$ qui étend la structure de $\RR^{alg}$- variété sur $S_{\Gamma, G,M}$. Notons encore une fois que ces structures dépendent fortement du choix d'un ensemble de Siegel de $G$. Différents choix de compacts maximaux de $G$ donnent lieu à des ensembles de Siegel différents qui à leur tour définissent des structures $\R^{alg}$ et $\R^{an}$ différentes sur $S$. Les méthodes de ce texte sont insensibles à ces questions.

\subsection{Mesures de Haar et dimension} Le résultat simple suivant nous servira dans la suite.
\begin{proposition}\label{Haardim}
Soit~$G$ un groupe algébrique défini sur~$\R$ réel, supposé affine, et 
\[
G(\R)\subseteq\R^n
\]
un plongement (algébrique réel) affine fermé. Soit~$\nu$ une mesure de Haar sur~$G(\R)$ et notons~$d=\dim(\G)$ sa dimension de variété algébrique.

Alors pour toute partie~$A\subseteq \R^n$ définissable dans une structure o-minimale, la partie~$A\cap G$ est borélienne. En outre et~$\nu(A\cap \G(\R))=0$ dès que sa dimension comme partie définissable vérifie~$\dim(A)<d$.
\end{proposition}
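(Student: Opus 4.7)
Le plan est de traiter séparément la borélianité et la nullité. La borélianité de~$A\cap\G(\R)$ résulte directement du Lemme~\ref{definissable est borelien}: toute partie définissable de~$\R^n$ y est borélienne, et~$\G(\R)\subseteq\R^n$ étant fermé (il est même algébrique réel), l'intersection~$A\cap\G(\R)$ l'est aussi.

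Pour la nullité de~$\nu(A\cap\G(\R))$, je commence par remplacer~$A$ par~$A\cap\G(\R)$, qui reste définissable de dimension au plus~$\dim(A)<d$ et contenu dans~$\G(\R)$. Le Lemme~\ref{definissable est borelien} fournit alors l'écriture
\[
\chi_A=\sum_c(-1)^c\chi_{\overline{\partial^c A}},
\]
ce qui ramène, par linéarité de l'intégrale contre~$\nu$, à établir que~$\nu(B)=0$ pour toute partie~$B\subseteq\G(\R)$ fermée définissable avec~$\dim(B)<d$ (chacun des~$\overline{\partial^c A}$ étant de cette forme d'après les inégalités~\eqref{dimension facts}). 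J'invoque ensuite le théorème classique de décomposition en cellules~$C^1$ dans une structure o-minimale (cf.~\cite{Dries}): un tel~$B$ se décompose en une réunion finie disjointe de sous-variétés différentielles définissables~$C_i\subseteq\R^n$, de dimensions~$k_i\leq\dim(B)<d$, toutes contenues dans~$\G(\R)$.

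Comme~$\G$ est un groupe algébrique lisse (on est en caractéristique nulle),~$\G(\R)$ est une sous-variété analytique réelle fermée de~$\R^n$ de dimension~$d$, et toute mesure de Haar~$\nu$ y est localement à densité~$C^\infty$ strictement positive par rapport à la mesure de Lebesgue de~$\R^d$, ou, de manière équivalente, par rapport à la mesure de Hausdorff~$d$-dimensionnelle induite. Chaque~$C_i\subseteq\G(\R)$ étant une sous-variété~$C^1$ de dimension strictement inférieure à~$d$, elle est de mesure de Hausdorff~$d$-dimensionnelle nulle (localement un graphe au-dessus d'un sous-espace de dimension~$<d$), donc de~$\nu$-mesure nulle. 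Par additivité finie,~$\nu(B)=0$, puis~$\nu(A\cap\G(\R))=0$. Le seul point qui demande un soin réel est l'articulation entre la dimension o-minimale et la négligeabilité pour~$\nu$: la décomposition cellulaire ramène à des sous-variétés~$C^1$, et l'absolue continuité locale de~$\nu$ par rapport à Lebesgue conclut; aucun de ces deux ingrédients n'est propre au contexte de ce texte.
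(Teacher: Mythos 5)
Votre démonstration est correcte et suit exactement la voie que le texte esquisse (la preuve y est « laissée au lecteur » avec pour seuls ingrédients l'absolue continuité locale de la mesure de Haar par rapport à la mesure de Lebesgue et des propriétés simples des ensembles définissables) : borélianité via le Lemme~\ref{definissable est borelien}, puis décomposition cellulaire~$C^1$ ramenant à des sous-variétés de dimension~$<d$, négligeables pour~$\nu$. Rien à redire.
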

N.B.: La notion de dimension utilisée n'importe guère. La dimension de la variété algébrique~$G$ (de Krull, ou au  dessus de sur~$\R$) est aussi celle de~$G(\R)$ comme groupe de Lie, et de~$G(\R)\subseteq \R^n$ comme partie définissable. La preuve, laissée au lecteur repose sur le fait que localement la mesure de Haar  est à densité par rapport à la mesure de Lebesgue et de propriétés simples des ensembles définissables dans une théorie o-minimale.

\section{Familles définissables et convergence de Hausdorff} \label{sec3}

Soit $K$ une partie compacte de $\R^n$. Soit $\mathcal{K}(K)$ l'ensemble des sous-ensembles fermés non vides (donc compacts) de $K$. Alors $\mathcal{K}(K)$ est muni de la distance de Hausdorff qui à  deux fermés $F_1$ et $F_2$ non vides de $K$ associe
$$
\delta_H(F_1,F_2)=\max\{\sup_{y\in F_2}d(y, F_1), \sup_{x\in F_1} d(x,F_2)\}.
$$
L'ensemble $\mathcal{K}(K)$ muni de la distance de Hausdorff $\delta_H$ est séquentiellement compact. Ceci donne un sens à la notion de limite de Hausdorff d'une suite de fermés non vides de $K$. 


Nous reprenons une convention de~\cite{Lion} qui nous permettra de se passer d'hypothèse de fermeture sur les familles définissables considérées dans nos énoncés. Cette convention étend la convergence
au sens de Hausdorff, pour toute suite~$(A_i)_{i\in\N}$ de parties de~$K$, en posant
\[
\lim_{i\in\N} A_i=L\qquad\text{ si et seulement si }\qquad\lim_{i\in\N} \overline{A_i}=L \text{ dans }\mathcal{K}(K).
\]

\subsection{Énoncé de semi-continuité}
Le point de départ de notre approche est l'énoncé suivant, dont nous avons, finalement, trouvé une référence, dans~\cite[Th.~3.1 (1), (2)]{Actes Dries}. L'article~\cite{Lion} donne une approche du point central menant à cet énoncé. Nous recommandons fortement son introduction: pour son explication du résultat, pour les références données, pour la discussion sur l'origine contemporaine du résultat. Mentionnons également~\cite[\S1, Prop.~1]{Zell}, un argument de réduction dimensionnelle via le Lemme du chemin, qui nous a permis, dans une version antérieure, de prouver aisément l'inégalité de dimensions~\eqref{semicontinuite de la dimension}.

\begin{proposition}\label{Theoreme o-minimal 1}
Soit~$n\geq 0$ un entier et soit~$K$ une partie compacte de~$\R^n$.

Soit~$(A(b))_{b\in B}$ une famille définissable de sous-ensembles de~$\R^n$ tous contenus dans~$K$.

Nous étudions, pour une suite~$(b_i)_{i\in\N}$ dans~$B$, la suite extraite~$(A(b_i))_{i\in\N}$.

Supposons que nous avons une limite au sens de Hausdorff~$L=\lim_{i\in\N} A(b_i)$.
\begin{enumerate}
\item Alors~$L$ est une partie compacte de~$\R^n$ contenue dans~$K$ (c'est immédiat) et est définissable dans~$\R^n$ (c'est bien connu),
\item et en outre (c'est notre sujet, et moins connu) la dimension ne peut que chuter à la limite, au sens où on a l'inégalité de semi-continuité inférieure
\begin{equation}\label{semicontinuite de la dimension}
\dim(L)\leq \liminf_{i\geq 0} \dim A(b_i).
\end{equation}
\end{enumerate}
\end{proposition}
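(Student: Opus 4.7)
Compactness of $L$ and its containment in $K$ are immediate properties of Hausdorff limits of subsets of a compact set. Under the convention of the text, we may replace each $A(b)$ by its closure $\overline{A(b)}$, which remains a definable family of the same fiberwise dimensions by o-minimality, so we may assume the $A(b_i)$ are closed. Extracting a subsequence, we may further assume that $\dim A(b_i)$ is constant, equal to $d := \liminf_{i} \dim A(b_i)$; this uses that $\{b \in B : \dim A(b) = d\}$ is a definable subset of $B$, a standard fact for definable families.

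For part (1), the definability of $L$, my plan is to apply uniform cell decomposition of the family. Applying cell decomposition to $\tilde{A} = \{(b,x) : x \in A(b)\} \subseteq B \times \R^n$ relative to the projection onto $B$ yields a finite partition of $\tilde{A}$ into definable cells $C_1, \ldots, C_N$ such that each fiber $A(b)$ is the disjoint union of the fibers $C_j(b)$, some possibly empty. After a further extraction, each $C_j(b_i)$ converges in Hausdorff sense to a compact set $L_j$, and $L = \bigcup_{j=1}^N L_j$. Definability of each $L_j$ then follows from the classical principle (the ``déja bien connu'' part of the theorem) that a Hausdorff limit of cells in a uniformly definable family is itself definable; concretely one parameterizes each cell by a definable map from a product of intervals and takes the limit in the compact space of parameters.

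For part (2), the inequality $\dim L \leq d$, the reduction above lets us treat one cell family $C(b)$ at a time, with $\dim C(b) \leq d$, and show $\dim L' \leq d$ for its Hausdorff limit $L'$. I would argue by contradiction: suppose $\dim L' \geq d+1$. By cell decomposition of $L'$, there is an open $(d+1)$-cell $E \subseteq L'$; pick $p$ in the interior of $E$, together with $d+1$ pairwise transverse definable curves through $p$ inside $E$. By Hausdorff convergence combined with the cell structure of $C(b_i)$ (Zell's path lemma, equivalently curve selection for definable families), each such curve lifts, for $i$ large, to a nearby curve in $C(b_i)$; transversality is preserved in the limit, so the $d+1$ lifted curves remain transverse for $i$ large. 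This forces $\dim C(b_i) \geq d+1$, contradicting $\dim C(b_i) \leq d$.

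The main obstacle is precisely this lifting step. Hausdorff convergence is a purely metric notion with no a priori connection to the combinatorial/definable structure of the $A(b_i)$, and the sequence $(b_i)$ is not itself assumed definable, so one cannot take the limit ``inside the theory''. Zell's path lemma and van den Dries's uniform cell decomposition show that the two notions can be reconciled in the o-minimal setting, but the reconciliation is delicate: one must extract from the family a definable trace of the limit via uniform compactness and local triviality of definable families, rather than working with any individual $b_i$. Once this lifting is available, the dimension comparison is forced by the trivial observation that $(d{+}1)$ transverse curves cannot live inside a set of dimension $\leq d$.
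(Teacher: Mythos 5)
The paper offers no proof of this proposition: it is quoted from van den Dries's \emph{Limit sets in o-minimal structures} (Th.~3.1), with Lion--Speissegger and Zell indicated as alternative routes. So the only question is whether your argument stands on its own, and it does not. Your preliminary reductions are correct and are indeed how the references begin: passing to closures, extracting a subsequence along which $\dim A(b_i)$ is constant equal to $d$, applying uniform cell decomposition to $\tilde A$, and writing $L$ as the union of the Hausdorff limits of the cell families. But your treatment of part (1) is circular: the ``classical principle that a Hausdorff limit of cells in a uniformly definable family is definable'' \emph{is} the statement to be proved, restricted to cells, and the justification you offer does not work --- the parameters $b_i$ need not converge in $B$, and even when they do, a pointwise limit of the definable parameterizations $f_{b_i}$ of the cells is in general not definable. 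This is precisely why the known proofs pass through model-theoretic compactness (elementary extensions and Marker--Steinhorn, in van den Dries) or through the geometric deformation argument of Lion--Speissegger.

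Part (2) contains two genuine gaps. First, curves in $L'$ do not ``lift'' to nearby curves in $C(b_i)$: Hausdorff convergence only provides pointwise $\varepsilon_i$-approximation, and the rigidity you are invoking --- that fibres of bounded definable complexity cannot accumulate onto a set of strictly larger dimension --- is exactly the content of the inequality \eqref{semicontinuite de la dimension}, not something one may assume. (Compare the non-definable family of grids $\{k/i:0\le k\le i\}\subseteq[0,1]$, whose Hausdorff limit is $[0,1]$: nothing in the metric notion of convergence prevents this, only the definability of the family does.) Curve selection applies inside a single definable set, not across the sequence $i\mapsto b_i$, which is not definable. Second, even granting the lifting, the final step is false: $d+1$ pairwise transverse definable curves through a point can lie in a set of dimension $1$ --- take the union of the coordinate axes in $\R^{d+1}$ --- so their presence in $C(b_i)$ does not force $\dim C(b_i)\ge d+1$. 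To force dimension $\ge d+1$ one needs an open piece of a $(d+1)$-dimensional set, which transverse curves do not provide. The dimension bound must instead be extracted from uniform finiteness in the definable family (this is what Zell's Proposition~1 and van den Dries's Th.~3.1(2) actually do); as written, your argument does not establish it.
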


\section{Mesures et o-minimalité dans un contexte genéral}\label{sec4} Le Théorème \ref{Proposition mesures} est le résultat principal de cette section, et le pivot de ce travail. Si sa formulation est technique, c'est taillé à nos besoin ultérieurs: sa raison d'être est son invocation pour étudier des mesures homogènes rencontrées dans la théorie de Ratner.

Nous donnons auparavant quelques clarifications de rigueur sur les notions de mesures et de convergence de mesures et un énoncé qui compare limite de Hausdorff de la suite des supports d'une suite de mesures convergente avec le support de la mesure limite.

\subsection{Conventions sur la théorie de la mesure}
Nous travaillons avec des mesures sur des espaces topologiques. Comme elles nous ne serviront que pour étudier leur support, nous  incarnerons les mesures comme fonctions d'ensemble, définies sur la tribu borélienne, dénombrablement additives, à valeurs dans~$\R_{\geq0}\cup\{+\infty\}$:
Toutes les mesures seront supposées positives (réelles). En outre toutes les mesures rencontrées seront aussi supposées localement finies.

Soit donnée~$\mu$ une mesure sur un espace topologique~$K$,  comme fonction d'ensemble.
\begin{enumerate}
\item Nous dirons qu'une partie~$N\subseteq K$ est \emph{négligeable (relativement à~$\mu$)} si elle est contenue dans une partie de la tribu borélienne qui est de mesure nulle.
\item Nous dirons que~$\mu$ est \emph{concentrée} dans une partie~$Z\subseteq K$ si la partie complémentaire~$K\smallsetminus Z$ est négligeable.
\item On appelle~\emph{Support} de la mesure~$\mu$ la partie
\[\operatorname{Supp}{}(\mu):=K\smallsetminus\bigcup\left\{U\subseteq K\,\middle|\, U=\mathring{U}, \mu(U)=0\right\}=\bigcap\left\{Z\subseteq K\,\middle|\,Z=\overline{Z}, \mu(K\smallsetminus Z )=0\right\}.\]
\end{enumerate}
\subsection{Remarques}
\begin{itemize}
\item Si~$\mu$ est \emph{finie}, entendre~$\mu(K)<\infty$, alors~$\mu$ est concentrée sur~$Z$ si et seulement si~$\mu(Z)=\mu(K)$. 
\item Si~$Z$ est fermée, alors que~$\mu$ est concentrée sur~$Z$  cela implique
\(\operatorname{Supp}{}(\mu)\subseteq Z\).
\item Si~$\mu$ est intérieurement régulière alors~$\mu$ est concentrée sur son support. (Ce n'est pas le cas de la mesure \cite[\S7.1.3]{Bogachev2} de Dieudonné.)
\end{itemize}

Dans le reste de cette section les mesures seront (de masse) finie. Dans les section suivantes on étudiera plus précisément des mesures de probabilité, mais aussi des mesures infinies associées.

\subsection{Cadre restreint} Pour l'énoncé principal de cette section, nous pourrons nous contenter de mesures finies \emph{concentrées} sur une même partie compacte~$K$ de~$\R^n$.

\subsubsection{Convergence}
Comme~$K$ est compact, la convergence \emph{étroite}, \emph{vague} et \emph{faible} reviennent au même: elles utilisent les mêmes fonctions tests, prises dans l'algèbre~$C(K)$ des fonctions continues~$K\to\R$. On peut réaliser chaque fonction~$f\in C(K)$ comme restriction d'une fonction continue à support compact sur~$\R^n$ (extension de Tietze, version de Brouwer et Lebesgue~\cite{HazeEncy}). 

Nous noterons, utilisant l'intégrale de Lebesgue
\[
\lim_{i\in\N}
\mu_i=\mu_\infty
\text{, ou }
\mu_i\rightharpoonup\mu_\infty
\text{, ou }
\forall f\in C(K), \int f \mu_i\to\int f\mu_\infty
\]
pour
\[
\forall f\in C(K),\lim_{i\in\N}\int f \mu_i=\int f\mu_\infty.
\]

\subsubsection{Point de vue fonctionnel}
Comme~$K$ est métrisable, toutes les mesures de Borel sont régulières, et correspondent univoquement à leur fonctionnelle d'intégration, d'après la version idoine du Théorème de représentation de Riesz: \cite[1.3.3]{Bogachev3} ou~\cite[2.14, cf. 2.15]{Rudin}, cf. \cite[p\,179]{RieszHistoire}.

\subsubsection{Théorème de Prokhorov} Il nous arrivera d'utiliser l'énoncé suivant, qui est une des formes du Théorème de Prokhorov donne. 
Il se déduit des formes plus complètes~\cite[\S8.6, Th.~8.6.2, cf.~Def.~8.6.1]{Bogachev2} ou~\cite[Th. 4.3.2, Def. 1.4.10]{Bogachev3}, en utilisant~$\mathcal{K}_\varepsilon=K$ dans les définitions, et~$\mathcal{M}=\{\mu_i|i\in\N\}$ dans les Théorèmes.

À noter que cet auteur nomme ``\emph{weak convergence}'' \cite[Def. 8.1.1]{Bogachev2} \cite[Def. 2.2.1]{Bogachev3} ce que d'autres appellent \og{}convergence étroite\fg{}. Comme notre espace ambiant~$K$ est compact, aucune confusion n'est à craindre.

\begin{theorem}\label{Prokhorov} Soit~$K$ un espace compact métrisable.

Toute une suite~$(\nu_i)_{i\in\N}$ de mesures (positives) finies~$\nu_i$ dans~$K$, uniformément bornées, au sens où
\begin{equation}\label{H Prokhorov}
\limsup_{i\in\N}\nu_i(K)<+\infty, \text{(soit encore~$\sup_{i\in\N}\nu_i(K)<\infty$)}
\end{equation}
contient une sous-suite infinie convergente.
\end{theorem}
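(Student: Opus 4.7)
La stratégie consiste à ramener l'énoncé à un argument d'extraction diagonale dans le dual de~$C(K)$, en exploitant le point de vue fonctionnel rappelé juste avant le théorème: chaque mesure finie~$\nu$ sur~$K$ s'identifie, via le Théorème de représentation de Riesz, à la forme linéaire positive~$f\mapsto\int f\,d\nu$ sur~$C(K)$, laquelle est continue de norme~$\nu(K)$.

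D'abord, je rappellerais que, puisque~$K$ est compact métrisable, l'algèbre~$C(K)$ est séparable (on peut invoquer Stone-Weierstrass appliqué à la sous-algèbre engendrée par une famille dénombrable dense de fonctions Lipschitziennes, telles que les~$x\mapsto d(x,x_k)$ pour~$(x_k)$ dense dans~$K$). Fixons donc une suite~$(f_k)_{k\in\N}$ dense dans~$C(K)$. Par l'hypothèse~\eqref{H Prokhorov}, posons~$M:=\sup_{i\in\N}\nu_i(K)<\infty$; pour chaque~$k$, la suite des réels
\[
\left(\int f_k\,d\nu_i\right)_{i\in\N}
\]
est bornée par~$M\cdot\|f_k\|_\infty$, donc admet, par Bolzano-Weierstrass, une sous-suite convergente dans~$\R$.

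Ensuite, par le procédé diagonal de Cantor, on extrait une sous-suite~$(\nu_{i_n})_{n\in\N}$ telle que, pour chaque~$k\in\N$, la limite~$\ell(f_k):=\lim_n \int f_k\,d\nu_{i_n}$ existe. La borne uniforme~$|\int f\,d\nu_{i_n}|\leq M\cdot\|f\|_\infty$ et la densité des~$(f_k)$ entraînent, par un argument de~$3\varepsilon$, que la suite~$\bigl(\int f\,d\nu_{i_n}\bigr)_n$ est de Cauchy pour toute~$f\in C(K)$; on peut donc prolonger~$\ell$ en une forme linéaire sur~$C(K)$, encore notée~$\ell$, vérifiant~$|\ell(f)|\leq M\cdot\|f\|_\infty$ et la positivité~$\ell(f)\geq 0$ dès que~$f\geq 0$ (cette dernière propriété passe à la limite de manière immédiate).

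Enfin, le Théorème de représentation de Riesz (dans la version déjà invoquée, adaptée aux espaces compacts métrisables) fournit une unique mesure borélienne finie~$\nu_\infty$ sur~$K$ telle que~$\ell(f)=\int f\,d\nu_\infty$ pour tout~$f\in C(K)$; par construction même,~$\nu_{i_n}\rightharpoonup\nu_\infty$ au sens étroit, soit la conclusion cherchée. Le point le moins automatique est sans doute la vérification que la fonctionnelle limite~$\ell$ est effectivement représentable par une mesure (et pas seulement une fonctionnelle abstraite), mais c'est précisément le contenu du Théorème de Riesz couplé à la positivité préservée à la limite; tout le reste n'est que Bolzano-Weierstrass et extraction diagonale.
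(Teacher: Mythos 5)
Votre démonstration est correcte. Notez toutefois que le texte ne démontre pas cet énoncé: il le déduit des formes générales du Théorème de Prokhorov citées chez Bogachev, où l'hypothèse de tension est automatiquement satisfaite en prenant~$\mathcal{K}_\varepsilon=K$ puisque l'espace ambiant est compact. Vous donnez à la place l'argument direct et classique de compacité séquentielle faible-$*$: séparabilité de~$C(K)$ (Stone--Weierstrass), extraction diagonale sur une suite dense~$(f_k)$, argument de~$3\varepsilon$ pour obtenir une fonctionnelle limite~$\ell$ linéaire, bornée et positive, puis représentation de Riesz pour l'incarner en une mesure~$\nu_\infty$ (la régularité étant automatique sur un compact métrisable, comme le rappelle d'ailleurs le paragraphe «~Point de vue fonctionnel~» du texte). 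C'est en substance une preuve du théorème de Banach--Alaoglu séquentiel dans le dual d'un Banach séparable, restreinte au cône positif; elle est autonome et élémentaire, là où la référence invoquée par le texte couvre le cadre plus général des espaces polonais non compacts, superflu ici. Le seul point que vous pourriez expliciter davantage est le passage de la fonctionnelle~$\ell$ définie d'abord sur la partie dense~$\{f_k\}$ à sa linéarité sur tout~$C(K)$: il est plus propre de définir directement~$\ell(f):=\lim_n\int f\,d\nu_{i_n}$ pour toute~$f$ (la limite existant par votre argument de Cauchy), la linéarité et la positivité étant alors immédiates par passage à la limite; mais c'est un détail de rédaction, pas une lacune.
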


\subsection{Supports de mesures et limites de Hausdorff}

Soit $K$ un espace compact métrisable. Soit $M(K)$ l'espace des mesures de Borel positives finies. L'énoncé suivant, utile dans la suite, est sans doute bien connu, mais faute de référence nous en donnons une preuve que le lecteur pourra omettre dans une première lecture.

\begin{lemme}\label{lemme faible vietoris}

Soit~$(\mu_i)_{i\in \N}$ une suite  faiblement convergente de mesures appartenant à~$M(K)$,
 
 Soit~$(Z_i)_{i\in \N}$ une suite convergente, au sens de Hausdorff, de parties de~$K$, telle que pour tout indice~$i\in I$ la mesure~$\mu_i$ est concentrée sur~$Z_i$.

Si~$\mu_\infty$ est une limite de~$(\mu_i)_{i\in \N}$ dans~$M(K)$, alors la partie fermée limite
$$
Z_\infty=\lim_{i\in \N}Z_i\in \mathcal{K}(K)
$$
 vérifie
\[
\operatorname{Supp}{}(\mu_\infty)\subseteq Z_\infty.
\]
Si~$\mu_\infty=\mu^r_\infty$ est  la limite de~$(\mu_i)_{i\in \N}$ qui est régulière,  alors~$\mu_\infty$ est concentrée dans~$Z_\infty$.
\end{lemme}


\begin{proof}Par notre convention sur la convergence au sens de Hausdorff, on peut supposer que les~$Z_i$ sont fermés. Donc
\[
\operatorname{Supp}{}(\mu_i)\subseteq Z_i.
\]

Soit~$x$ arbitraire dans~$K\smallsetminus Z_\infty$. Nous allons montrer~$x\not\in \operatorname{Supp}{}(\mu_\infty)$.

Pour tout~$z\in Z_\infty$ il existe~$x\in U_z$ et~$z\in V_z$ deux ouverts disjoints. Par compacité de~$Z_\infty$ on extrait~$z_1,\ldots,z_n$
tels que~$V=V_{z_1}\cup\ldots\cup V_{z_n}$ est un voisinage de~$Z_\infty$, qui est alors disjoint du voisinage~$U=U_{z_1}\cap\ldots \cap U_{z_n}$ de~$x$.

Par le Lemme d'Urysohn, il existe une fonction continue~$f\geq0$, nulle sur~$\overline{V}$ et valant~$1$ sur~$\{x\}$. 

Les parties compactes de~$K$ contenues dans~$V$ forment un voisinage de~$Z_\infty\in\mathcal{K}(K)$ pour la topologie de Vietoris. Pour tout $j$ assez grand,
\[
\operatorname{Supp}{}(\mu_j)\subseteq Z_j
\subseteq V.
\]

Pour tout~$\varepsilon>0$, le compact~$W_\varepsilon=\{x\in K|f(x)\geq\varepsilon\}$ est disjoint de~$\overline{V}$, a fortiori de~$Z_j$, et tout autant de~$\operatorname{Supp}{}(\mu_j)$. Tout point~$w$ de~$W_{\varepsilon}$ a un voisinage~$W_{w,j}$ de mesure nulle pour~$\mu_j$. À indice~$j$ fixé, par Borel-Lebesgue, on extrait un sous-recouvrement fini~$W_{w_1,j},\ldots,W_{w_m,j}$ de~$W_\varepsilon$. Il suit
\[
\mu_j(W_\varepsilon)\leq \mu_j(W_{z_1,j})+\ldots+\mu(W_{z_m,j})=0+\ldots+0=0.
\]
Il s'ensuit~\(0\leq\int f\mu_j=\int_{W_\varepsilon}f\mu_j+\int_{K\smallsetminus W_\varepsilon}\leq 0+ \varepsilon\cdot \mu_j(K)\) et,~$\varepsilon$ étant arbitraire, 
\[\int f\mu_j=0.\]
Par convergence faible il suit~$\int f\mu_\infty=0$. Soit~$W$ le voisinage ouvert~$\{x\in X|f(x)>1/2\}$ de~$x$. Par construction de l'intégrale de Lebesgue nous avons
\[
0=\int f\mu_\infty\geq \frac{1}{2}\cdot \mu_\infty(W), \qquad\text{ et donc }\mu_\infty(W)=0.
\]
Comme~$x$ a un voisinage~$W$ négligeable, nous avons par définition~$x\not\in\operatorname{Supp}{}(\mu_\infty)$, ce qui est était recherché.

Nous avons montré
\[
\operatorname{Supp}{}(\mu_\infty) \subset Z_{\infty}
\] 
pour toute limite faible, régulière ou non. Supposons maintenant que~$\mu_\infty$ est la limite régulière. Il s'ensuit que~$\mu_\infty$ est concentrée dans son support, donc dans~$Z_\infty$.
\end{proof}

\subsection{Énoncé} L'énoncé technique suivant contient  du superflu (on peut abstraire l'une ou l'autre des familles considérées) pour mieux servir les besoins de la preuve de~\ref{Charnière}

\begin{theorem}\label{Proposition mesures}
Soit~$n>0$ un entier, soit~$K\subseteq \R^n$ une partie compacte, et soit  $(\nu_i)_{i\in \N}$une suite convergente 
\[
\lim_{i\in\N}
\nu_i=\nu_\infty,
\]
de mesures finies, de limite~$\nu_\infty$ finie, toutes concentrées dans~$K$.

Soit aussi~$d$ tel que toute partie définissable de~$\R^n$ de dimension strictement inférieur à~$d$ est négligeable pour~$\nu_\infty$: pour toute partie~$A\subseteq\R^n$ définissable
\begin{equation}\label{H1}
\dim(A)<d\Rightarrow \nu_\infty(A\cap K)=0.
\end{equation}

Considérons deux familles définissables~$(A(b))_{b\in B}$ et~$(A'(b'))_{b'\in B'}$, dont les membres sont tous dans~$\R^n$, et deux suites~$(b_i)_{i\in\N}$ dans~$B$ et~$(b'_i)_{i\in\N}$ dans~$B'$.

On suppose que 
\begin{itemize}
\item la première famille enveloppe les supports au sens où
\begin{equation}\label{H0}
\forall i\in\N,~\operatorname{Supp}{}(\nu_i)\subseteq A(b_i)
\end{equation}
\item la deuxième est asymptotiquement de mesure non nulle au sens où
\begin{equation}\label{H2}
\liminf_{i\in\N}\nu_i(A'(b'_i))>0.
\end{equation}
\end{itemize}

On a alors la minoration
\[
\liminf_{i\geq 0} \dim (A(b_i)\cap A'(b'_i))\geq d.
\]
\end{theorem}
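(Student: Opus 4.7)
My approach is to argue by contradiction, pitting the dimensional semi-continuity of Proposition~\ref{Theoreme o-minimal 1} against a positive lower bound for the $\nu_\infty$-mass of the Hausdorff limit of the sets $\overline{A(b_i)\cap A'(b'_i)}$. As preparation, I fix a closed ball $C\subseteq\R^n$ containing $K$, and replace $A(b)$ by $A(b)\cap C$ and $A'(b')$ by $A'(b')\cap C$. Since $\operatorname{Supp}{}(\nu_i)\subseteq K\subseteq C$ and the $\nu_i$ are concentrated in~$K$, all hypotheses survive this truncation, and the family $(A(b)\cap A'(b'))_{(b,b')\in B\times B'}$ is now a definable family of subsets of the compact~$C$.

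Suppose for contradiction that $\liminf_i \dim(A(b_i)\cap A'(b'_i))\leq d-1$. Extract a subsequence with $\dim(A(b_i)\cap A'(b'_i))\leq d-1$ for every~$i$, and by sequential compactness of $\mathcal{K}(C)$ extract further so that $\overline{A(b_i)\cap A'(b'_i)}$ converges in Hausdorff distance to some closed $M\subseteq C$. Proposition~\ref{Theoreme o-minimal 1} then makes~$M$ definable and delivers
\[
\dim(M)\leq \liminf_i \dim(A(b_i)\cap A'(b'_i))\leq d-1<d.
\]
Hypothesis~\eqref{H1} therefore yields $\nu_\infty(M\cap K)=0$, and since $\nu_\infty$ is concentrated in~$K$ this amounts to $\nu_\infty(M)=0$.

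The heart of the argument is establishing the contradictory inequality $\nu_\infty(M)>0$. Finite Borel measures on the compact metric space $C$ are automatically regular, so $\operatorname{Supp}{}(\nu_i)\subseteq A(b_i)$ forces $\nu_i$ to be concentrated on $A(b_i)$; combined with~\eqref{H2} this gives
\[
\nu_i(\overline{A(b_i)\cap A'(b'_i)})\geq \nu_i(A(b_i)\cap A'(b'_i))=\nu_i(A'(b'_i))\geq c
\]
for~$i$ large, with $c:=\tfrac{1}{2}\liminf_i\nu_i(A'(b'_i))>0$. For each $\varepsilon>0$, the closed $\varepsilon$-neighborhood $M_\varepsilon$ of~$M$ in~$C$ eventually contains $\overline{A(b_i)\cap A'(b'_i)}$ by Hausdorff convergence, so $\nu_i(M_\varepsilon)\geq c$ eventually. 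The Portmanteau inequality $\limsup_i\nu_i(F)\leq\nu_\infty(F)$ for closed $F$ then gives $\nu_\infty(M_\varepsilon)\geq c$; continuity from above for the finite measure $\nu_\infty$, applied to the decreasing intersection $\bigcap_{\varepsilon>0}M_\varepsilon=M$, yields $\nu_\infty(M)\geq c>0$, contradicting $\nu_\infty(M)=0$.

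The step I expect to be the main obstacle is precisely this measure-theoretic passage from the support and mass hypotheses~\eqref{H0} and~\eqref{H2} to a positive lower bound on $\nu_\infty(M)$: one cannot apply Portmanteau directly to~$M$, since $\nu_i(M)$ itself may well vanish for every~$i$, so the whole strategy hinges on the shrinking closed neighborhoods~$M_\varepsilon$ together with the inner regularity of the~$\nu_i$, which turns the support-condition~\eqref{H0} into the clean identity $\nu_i(A'(b'_i))=\nu_i(A(b_i)\cap A'(b'_i))$ used throughout.
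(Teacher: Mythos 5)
Your argument is correct, and its skeleton coincides with the paper's: reduce to a definable family of subsets of a fixed definable compact, argue by contradiction, extract a Hausdorff limit, invoke Proposition~\ref{Theoreme o-minimal 1} to make that limit definable of dimension~$<d$ hence $\nu_\infty$-negligible by~\eqref{H1}, and then contradict this by showing the limit set carries positive $\nu_\infty$-mass inherited from~\eqref{H0} and~\eqref{H2}. Where you genuinely diverge is in that last, delicate measure-theoretic step. The paper restricts the measures to $A'(b'_i)$, extracts a weak limit $\nu'_\infty\leq\nu_\infty$ of these restrictions via the Prokhorov Theorem~\ref{Prokhorov}, and then applies its Lemma~\ref{lemme faible vietoris} (support of a weak limit is contained in the Hausdorff limit of sets carrying the measures) to localize $\nu'_\infty$ on~$L$ and conclude $\nu_\infty(L)\geq\nu'_\infty(L)=\nu'_\infty(K)>0$. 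You avoid both the auxiliary extraction and that lemma by applying the Portmanteau inequality to the shrinking closed neighborhoods $M_\varepsilon$ of the Hausdorff limit and then using continuity from above for the finite measure $\nu_\infty$; this is more elementary and self-contained (the Urysohn-type estimate hidden in Portmanteau is essentially what the paper's Lemma~\ref{lemme faible vietoris} proves by hand). Two small points you handle correctly but tersely: the identity $\nu_i(A'(b'_i))=\nu_i(A(b_i)\cap A'(b'_i))$ does require inner regularity of the $\nu_i$ (automatic for finite Borel measures on a compact metric space, as you note, and as the paper's remark on the Dieudonné measure warns is not free in general), and on the extracted subsequence the sets $\overline{A(b_i)\cap A'(b'_i)}$ are eventually nonempty because of~\eqref{H2}, so the Hausdorff limit in $\mathcal{K}(C)$ is well defined.
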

Ajoutant comme hypothèse l'égalité inverse,  le Lemme~\ref{Lemme dimension interieur} implique le résultat suivant.
\begin{corollaire}\label{Corollaire mesures}
Si de plus
\begin{equation}\label{H3}
d\geq\limsup_{i\geq 0} \dim(A(b_i)),
\end{equation}
alors
\(d=\lim_{i\geq 0} \dim(A(b_i))\).
Pour~$i$ assez grand, l'ensemble~$A'(b'_i)$ 
contient un ouvert non vide de~$A(b_i)$.
\end{corollaire}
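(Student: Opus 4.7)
Le plan est de combiner le Théorème~\ref{Proposition mesures} avec l'hypothèse additionnelle~\eqref{H3} par un simple argument de pincement, puis d'invoquer le Lemme~\ref{Lemme dimension interieur}. Je m'attends à ce qu'aucune difficulté substantielle ne se présente: tout le contenu non trivial est absorbé dans le Théorème~\ref{Proposition mesures}.

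D'abord, pour chaque~$i$, l'inclusion~$A(b_i)\cap A'(b'_i)\subseteq A(b_i)$ entre définissables fournit, via la monotonie de la dimension~\eqref{dimension facts}, l'inégalité
\[
\dim(A(b_i)\cap A'(b'_i))\leq \dim A(b_i).
\]
Combinée avec~\eqref{H3}, cette inégalité donne
\[
\limsup_{i}\dim(A(b_i)\cap A'(b'_i))\leq \limsup_{i}\dim A(b_i)\leq d,
\]
tandis que la conclusion du Théorème~\ref{Proposition mesures} assure~$\liminf_{i}\dim(A(b_i)\cap A'(b'_i))\geq d$. Par pincement on obtient~$\lim_{i}\dim(A(b_i)\cap A'(b'_i))=d$; de la même façon, la borne inférieure~$\liminf_{i}\dim A(b_i)\geq \liminf_{i}\dim(A(b_i)\cap A'(b'_i))=d$ combinée à~\eqref{H3} donne~$\lim_{i}\dim A(b_i)=d$, qui est la première assertion du corollaire.

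Comme la dimension prend ses valeurs dans l'ensemble discret~$\{-\infty,0,1,\ldots,n\}$, cette convergence équivaut à une stabilisation: pour~$i$ assez grand,
\[
\dim(A(b_i)\cap A'(b'_i))=\dim A(b_i)=d.
\]
Pour un tel~$i$, les deux ensembles sont en particulier non vides (puisque~$d\neq-\infty$, à défaut de quoi~$A(b_i)$ serait vide et la conclusion serait vacue), et de même dimension. Le Lemme~\ref{Lemme dimension interieur}, appliqué à la paire~$A(b_i)\cap A'(b'_i)\subseteq A(b_i)$, implique alors que l'intersection contient un ouvert non vide de~$A(b_i)$. Cet ouvert étant a fortiori contenu dans~$A'(b'_i)$, la seconde assertion du corollaire s'ensuit.
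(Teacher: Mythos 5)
Votre démonstration est correcte et suit exactement la voie que le texte laisse implicite (le papier ne rédige pas ce corollaire, se contentant d'invoquer le Lemme~\ref{Lemme dimension interieur} juste avant l'énoncé): encadrement de~$\liminf_i\dim(A(b_i)\cap A'(b'_i))$ entre~$d$ et~$\limsup_i\dim A(b_i)$ via le Théorème~\ref{Proposition mesures}, stabilisation par discrétude des valeurs de la dimension, puis application du Lemme~\ref{Lemme dimension interieur}. Signalons seulement que la non-vacuité des ensembles pour~$i$ grand (et donc~$d\geq0$) découle en réalité de l'hypothèse~\eqref{H2}, qui force~$\nu_i(A'(b'_i))>0$ et donc~$\emptyset\neq\operatorname{Supp}(\nu_i)\subseteq A(b_i)$, plutôt que de l'argument de vacuité de la conclusion que vous esquissez (lequel serait d'ailleurs inexact, un ouvert non vide d'un ensemble vide n'existant pas).
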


La preuve va être de se ramener à la section précédente et à l'inégalité~\ref{semicontinuite de la dimension}.
Il s'agit sinon de manipuler convergence et support de mesures avec la rigueur necessaire.

\begin{proof}
 Nous pouvons supposer~$A'(b'_i)\subseteq A(b_i)$, quitte à remplacer~$A'(b'_i)$ par
 $$
 C(b_i,b'_i)=A(b_i)\cap A'(b'_i)
 $$
  extrait de la famille définissable~$C(b,b')_{(b,b')\in B\times B'}$.  On substitue alors $(b_i,b'_i)$ à~$b'_i$. 

Nous préparons l'usage de la Proposition~\ref{Theoreme o-minimal 1}, en nous ramenant à des familles uniformément.
La partie compacte~$K$ est contenue dans une partie compacte~$K'$ définissable de~$\R^n$, par exemple une boule centrée à l'origine de rayon suffisamment grand. On peut remplacer~$(A(b))_{b\in B}$ par $(A(b)\cap K')_{b\in B}$ et~$(A(b'))_{b'\in B}$ par~$(A(b')\cap K')_{b'\in B}$, sans porter atteinte aux hypothèses.

Raisonnons par l'absurde, en supposant que l'on peut passer à une sous-suite infinie pour laquelle 
\begin{equation}\label{absurde}
\forall i, \dim A'(b'_i)<d.
\end{equation}

Par la compacité séquentielle rappelée au début de la section~\ref{sec3}, quitte à extraire, la suite~$(A'(b'_i))_{i\in \N}$ converge au sens de Hausdorff, vers une limite que nous baptisons~$L:=\lim A'(b'_i)$, et qui est une partie compacte de~$K$.
D'après la Proposition~\ref{Theoreme o-minimal 1}, appliquée au compact~$K'$, cette partie~$L$ est définissable dans~$\R^n$. Elle a donc une dimension.  L'inégalité~\eqref{semicontinuite de la dimension} et l'hypothèse~\eqref{absurde}, donnent
\[
\dim L\leq \liminf_{i\in\N} \dim (A'(b'_i))<d.
\]
Par hypothèse sur~$d$ et~$\nu_\infty$, nous avons
\begin{equation}\label{L negligeable}
\nu_\infty(L)=0.
\end{equation}
Travaillons maintenant avec la suite~$(\nu'_i)_{i\in\N}$ formée des restrictions 
\[
\nu'_i:=\nu_i\restriction_{A'(b'_i)}.
\]
Leurs masses admettent une borne par excès commune
\[
\limsup_{i\in\N} \nu'_i(K)=\limsup_{i\in\N} \nu_i(K\cap A'(b'_i))\leq \limsup_{i\in\N} \nu_i(K)=\nu_\infty(K)<+\infty,
\]
Par le Théorème~\ref{Prokhorov} de Prokhorov, quitte à extraire, la suite $(\nu'_i)_{i\in \N}$ a une limite~$\nu'_\infty$ qui satisfait la domination
\begin{equation} \label{infini domination}
\nu'_\infty:=\lim_{i \in \N}\nu'_i\leq \lim_{i \in \N}\nu_i =\nu_\infty.
\end{equation}

L'hypothèse~\eqref{H2}, implique la minoration
\[
\liminf_{i \in \N}\nu'_i(K)\ge \liminf_{i \in \N}\nu_i( A'(b'_i))>0.
\]
En utilisant  la fonction test~$1\in C(K)$, on obtient
\[\nu'_\infty(K)=\int_K 1~\nu_\infty'=\lim_{i \in \N}\int_K 1~\nu'_i>0.\]

D'après le Lemme~\ref{lemme faible vietoris}, la mesure~$\nu'_\infty$ est concentrée sur~$L\subseteq K$. Remémorant~\eqref{L negligeable} et~\eqref{infini domination}, voici la contradiction
\begin{multline}
\nu'_\infty(L)=
\nu'_\infty(K)= \lim_{i \in \N}\nu'_i(K)= \lim_{i \in \N}\nu_i\!\restriction_{A'(b'_i)}\!{}(K)
\\
=\lim_{i \in \N}\nu_i(A'(b'_i))>0=\nu_\infty(L)\geq
\nu'_\infty(L).\qedhere
\end{multline}
\end{proof}

\section{Dynamique homogène} \label{sec5} Nous voulons appliquer notre énoncé précédent dans une certaine situation ayant trait aux variétés de Shimura, pour laquelle nous pouvons utiliser les Théorèmes de Ratner, et surtout leurs conséquences précisées par Mozes et Shah. Cela va requérir de mettre en \oe uvre  une couche additionnelle de notions.

\subsubsection*{Références} Nous recommandons chaudement ces quelques précieuses références, de grande qualité, sur le sujet des réseaux arithmétiques~\cite{BorelIntro, Raghunathan, Margulis, Platonov}. Les Théorèmes de Ratner (voir \cite{Witte} pour une présentation) nous serviront par l'entremise de~\cite{MoSh}, et seulement dans le cas de réseaux qui sont arithmétiques.

\subsection{Quotients arithmétiques}  \label{def4}
Soit  $\G$ un groupe algébrique semi-simple $\G$ sur $\QQ$. Soit  $\Gamma\subset \G(\QQ)$ un réseau arithmétique. On fixe une représentation 
 fidèle~
 $\rho:\G\to \GL(W)$, dans un espace vectoriel $W$ sur~$\Q$ de dimension finie $N$. On en déduit une inclusion
 $$
 G\subset \GL(W\otimes \RR)\simeq \GL_N(\RR)\subset \RR^{N^2}.
 $$
 Cela donne un sens aux notions d'ensembles bornés et d'ensembles définissables de $G$. Ces notions sont en fait indépendants du choix d'une représentation fidèle.

\subsection{Définissabilité de certaines familles de doubles classes} \label{sec6}

Nous démontrons dans cette partie que certaines familles de doubles classes sont définissables. Cela résulte de techniques classiques dans la théorie de la réduction pour l'action d'un réseau arithmétique $\Gamma$ d'un groupe de Lie $G$. Faute de référence nous donnons des preuves des énoncés suivants qui jouerons un rôle important dans la suite. 

\begin{proposition} 

Soit~$\Gamma\subseteq \mathbf{G}(\Q)$ un réseau d'un groupe de Lie algébrique réel~$\mathbf{G}(\R)$ défini sur~$\Q$. Soit~$H$ la composante neutre d'un sous-groupe de Lie algébrique~$\mathbf{H}(\R)$ défini sur~$\Q$ sans caractère algébrique rationnel.

Pour toutes parties bornées~$U$ et~$V$ de~$G$, il existe un ensemble fini~$F$ de~$\Gamma$ tel que
\[
\forall g\in V,
(\Gamma Hg) \cap U=(F Hg) \cap U.
\]
\end{proposition}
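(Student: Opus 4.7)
\emph{Réduction.} Le premier pas serait de poser $W := UV^{-1}$ (borné dans $G$) et $\Gamma_H := \Gamma \cap H$. L'équivalence $\gamma_1 H = \gamma_2 H \Leftrightarrow \gamma_1^{-1}\gamma_2 \in \Gamma_H$ donne la partition $\Gamma H = \bigsqcup_{[\gamma] \in \Gamma / \Gamma_H} \gamma H$, d'où pour tout $g \in V$
\[
(\Gamma H g) \cap U = \bigsqcup_{[\gamma] \in \Gamma/\Gamma_H} (\gamma H g \cap U),
\]
et un terme $\gamma H g \cap U$ est non vide pour quelque $g \in V$ ssi $\gamma H \cap W \neq \emptyset$. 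Le plan est donc de montrer la finitude de $A := \{[\gamma] \in \Gamma/\Gamma_H : \gamma H \cap W \neq \emptyset\}$ : un système fini $F$ de représentants de $A$ dans $\Gamma$ satisfera alors l'égalité voulue, uniformément en $g \in V$.

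\emph{Finitude via Borel-Harish-Chandra.} Sous l'hypothèse que $\HH$ est sans caractère $\Q$-rationnel, le théorème de Borel-Harish-Chandra (\cite{BorelIntro,Raghunathan}) assure que $\Gamma_H$ est un réseau de $H$ et que $\Gamma H$ est fermée dans $G$. J'argumenterais ensuite par l'absurde : si $A$ était infini, je choisirais une suite $(\gamma_n)_{n \in \N}$ dans $\Gamma$ avec les $[\gamma_n]$ deux à deux distincts et $w_n := \gamma_n h_n \in W$ pour certains $h_n \in H$ ; par compacité de $\overline{W}$, on extrait $w_n \to w \in \overline{W}$, et la fermeture de $\Gamma H$ donne $w = \gamma_\infty h_\infty$ avec $\gamma_\infty \in \Gamma$, $h_\infty \in H$. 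Dans $\Gamma \backslash G$ on a $\Gamma h_n = \Gamma w_n \to \Gamma h_\infty$ ; la propreté du plongement fermé $\Gamma_H \backslash H \hookrightarrow \Gamma \backslash G$ (conséquence classique de la fermeture de $\Gamma H$, cf.~\cite[Prop.~1.13]{Raghunathan}) permettrait de relever en l'existence de $\delta_n \in \Gamma_H$ avec $\delta_n h_n \to h_\infty$ dans $H$. Alors $\gamma_n \delta_n^{-1} = w_n (\delta_n h_n)^{-1} \to \gamma_\infty$ dans $G$, et la discrétude de $\Gamma$ forcerait $\gamma_n \delta_n^{-1} = \gamma_\infty$ pour $n$ assez grand, soit $[\gamma_n] = [\gamma_\infty]$, d'où la contradiction.

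\emph{Principale difficulté.} Le point technique le plus délicat sera l'étape de relèvement, qui repose sur le fait que $\Gamma_H \backslash H \hookrightarrow \Gamma \backslash G$ est un plongement propre d'image fermée $\Gamma \backslash \Gamma H$, c'est-à-dire que toute suite dans cette image qui converge dans $\Gamma \backslash G$ se relève en une suite convergente dans $\Gamma_H \backslash H$. Cette propriété est l'interprétation topologique de la fermeture de $\Gamma H$ dans $G$ ; son établissement rigoureux repose soit sur une citation directe de \cite[Prop.~1.13]{Raghunathan}, soit sur un argument explicite utilisant les ensembles de Siegel de $G$ adaptés à $H$.
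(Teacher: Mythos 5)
Your proof is correct, but it takes a genuinely different route from the paper's. Both arguments begin with the same reduction (replacing $U$ by $UV^{-1}$ so as to eliminate the quantifier over $g\in V$, then reducing to the finiteness of the set of cosets $\gamma H$, $\gamma\in\Gamma$, meeting a fixed bounded set). From there the paper argues directly and ``arithmetically'': by Chevalley's theorem in the form of \cite[Thm~5.1]{Borel-LAG} (this is where the absence of rational characters is used), $H$ is the stabilizer of a vector $w$ in a $\Q$-representation, so that $G/H\simeq G\cdot w$ is closed in $W\otimes\R$; after scaling, $\Gamma\cdot w$ lies in a $\Gamma$-stable lattice, hence is discrete, while $\overline{U}\cdot w$ is compact, and a discrete bounded set is finite. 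You instead argue by contradiction via sequential compactness, using as a black box that $\Gamma H$ is closed and that $\Gamma_H\backslash H\hookrightarrow\Gamma\backslash G$ is proper (\cite[Prop.~1.13]{Raghunathan}); the lifting step you single out as the main difficulty is indeed exactly what that reference provides, and your deduction from it ($\gamma_n\delta_n^{-1}\to\gamma_\infty$ plus discreteness of $\Gamma$) is sound. The trade-off is that the paper's proof is more self-contained and elementary at the final step (discrete $\cap$ bounded $=$ finite), whereas yours outsources the hard content to Raghunathan's properness theorem --- which is itself usually proved by an argument of the same Chevalley-plus-lattice type. Your version also yields the slightly stronger qualitative information that only finitely many cosets of $\Gamma/\Gamma_H$ contribute, which is what the paper's finiteness of $(\Gamma H/H)\cap(UH/H)$ expresses in its own language.
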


\begin{proof}  Ajournant le choix de la partie finie~$F\subseteq \Gamma$, on raisonne par double inclusion, l'une étant triviale. Il faut in fine démontrer
\[
(\Gamma Hg) \cap U\subseteq (F Hg)
\]
indépendamment de~$g$ décrivant~$V$. Abstrayons~$g$ en substituant~$UV^{-1}$ à~$U$: 
en effet
\[
(\Gamma Hg) \cap U\subseteq (F Hg)\quad \Leftrightarrow\quad (\Gamma H) \cap Ug^{-1}\subseteq (F H)\quad \Leftarrow\quad   (\Gamma H) \cap UV^{-1}\subseteq (F H).
\]

Tout revient à montrer la finitude de
\(
(\Gamma H) \cap U \pmod{H}
\) 
c'est-à-dire de
\[
(\Gamma H/H )\cap (UH/H).
\]
Alors il suffira de choisir pour~$F$ un ensemble fini d'éléments de  $\Gamma$ dont l'image dans $\Gamma H/H$ recouvre $(\Gamma H/H) \cap (UH/H).$


Par un Théorème classique de Chevalley précisé dans \cite[Thm~5.1]{Borel-LAG}, il existe une représentation fidèle de $\G$ dans un $\Q$-espace vectoriel de dimension finie $W$ et un vecteur $w\in W$ tel que le stabilisateur de $w$ dans $G$ est $H$.

Donc~$G/H\simeq G\cdot w$ est plongé continûment (et même polynomialement et homéomorphiquement \cite[5.1 (Arens)]{BHC} \cite[Lem.~6.2]{Borel-LAG}) dans~$W\otimes\R$.  Comme~$\Gamma$ est un réseau arithmétique, il stabilise un réseau (linéaire)~$\Lambda$ de~$W\otimes\R$ contenu dans $W$. On peut donc remplacer~$w$ par un multiple entier non nul de sorte que~$w$ appartienne à~$\Lambda$.
Donc l'orbite~$\Gamma H/H\simeq \Gamma \cdot w$ est contenue dans un réseau~$\Lambda$, qui est un fermé discret.

Par ailleurs~$U\cdot w$ est borné dans~$W$ (car contenu dans le compact~$\overline{U}\cdot w$ image de~$\overline{U}$).

En fin de compte l' intersection
\[
(\Gamma H/H) \cap (UH/H)
\]
est bornée dans~$W$ et discrète, donc bel et bien finie.
%
%
\end{proof}

\begin{corollaire} \label{dc}
Supposons~$U$ et~$V$ semi-algébriques et bornés. La famille
\[
((\Gamma H) \cap g U)_{g\in V}
\]
est alors semi-algébrique.
\end{corollaire}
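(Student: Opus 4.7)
Le plan consiste à invoquer la Proposition précédente pour réduire la famille infinie~$\Gamma H$ à une union finie de translatés de~$H$, puis à vérifier directement la semi-algébricité du graphe qui en résulte.

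D'abord, j'appliquerais la proposition précédente aux parties bornées~$U':=VU$ (produit de deux bornés, donc borné) et~$V':=\{1\}$: elle fournit une partie finie~$F\subseteq\Gamma$ telle que
\[
(\Gamma H)\cap VU=(FH)\cap VU.
\]
Ensuite, pour tout~$g\in V$, on a trivialement~$gU\subseteq VU$, d'où en intersectant les deux membres ci-dessus avec~$gU$ l'égalité ponctuelle
\[
(\Gamma H)\cap gU=(FH)\cap gU.
\]
Ceci transforme la famille d'origine en la famille~$((FH)\cap gU)_{g\in V}$, pour laquelle~$F$ est fixe et indépendant de~$g$.

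Il reste à constater que cette dernière famille est semi-algébrique. Son graphe s'écrit
\[
\widetilde A=\{(g,x)\in V\times G\mid x\in FH \text{ et } g^{-1}x\in U\},
\]
qui est l'intersection de~$V\times FH$ avec l'image réciproque de~$U$ par l'application algébrique~$(g,x)\mapsto g^{-1}x$. La partie~$FH=\bigcup_{f\in F}fH$ est semi-algébrique, car~$F$ est finie et la composante neutre~$H=\mathbf{H}(\R)^+$ d'un sous-groupe algébrique réel est semi-algébrique (par décomposition en cellules classique en géométrie algébrique réelle). Par hypothèse~$U$ et~$V$ le sont aussi, d'où la semi-algébricité de~$\widetilde A$. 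Aucun obstacle technique sérieux n'est à prévoir: le seul point à surveiller est la semi-algébricité de~$H$, standard, et la réduction à un~$F$ fini donnée par la proposition effectue l'essentiel du travail.
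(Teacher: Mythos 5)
Votre démonstration est correcte et suit essentiellement la même voie que celle du texte : réduction de~$\Gamma H$ à l'union finie~$FH$ via la proposition précédente, puis description semi-algébrique directe du graphe~$\{(g,x)\mid x\in FH,\ g^{-1}x\in U\}$. Vous explicitez seulement un point laissé implicite dans le texte, à savoir l'invocation de la proposition avec~$U'=VU$ et~$V'=\{1\}$ pour obtenir un~$F$ indépendant de~$g\in V$, ce qui est bienvenu mais ne change pas l'argument.
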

\begin{proof}

 Dans
\[
((F H) \cap g U)_{g\in V}
\]
l'ensemble~$H$ est algébrique réel, de même que tout translaté~$f\cdot H$, et donc de l'union finie~$F\cdot H$ de tels translatés. 
La famille~$(FH\cap g U)_{g\in G}$ est définissable, car donnée par la propriété
\[
x\cdot g^{-1} \in U\text{ et }x\in FH.
\]
La restriction de la famille à~$V\subseteq G$ est donc définissable.

\end{proof}
Mettons en exergue le cas~$V=\{g\}$.
\begin{corollaire}
Alors, pour tout semi-algébrique borné~$U$ de~$G$ et~$g$ dans~$G$
\[
(\Gamma H g) \cap U
\]
est définissable.
\end{corollaire}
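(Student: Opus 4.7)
La stratégie consiste à instancier la Proposition précédente avec~$V=\{g\}$, qui est trivialement borné (puisque singleton) et semi-algébrique. Cette spécialisation fournit un ensemble fini~$F\subseteq\Gamma$ tel que
\[
(\Gamma H g)\cap U = (F H g)\cap U,
\]
ramenant ainsi l'étude du membre de gauche (où~$\Gamma$ est infini) à celle d'un membre de droite ne faisant plus intervenir qu'une union finie de translatés de~$H$. C'est l'unique étape non triviale: tout le travail technique --- finitude découlant, via un plongement de Chevalley, de la discrétude de l'orbite du vecteur stabilisé par~$H$ dans un~$\Q$-espace fidèle --- a été absorbé dans la Proposition elle-même.

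Le reste se résume à un constat de semi-algébricité. Le sous-groupe~$H$, composante neutre des points réels d'un groupe algébrique défini sur~$\Q$, est semi-algébrique (les composantes connexes d'un ensemble algébrique réel sont en nombre fini et semi-algébriques). A fortiori chaque translaté~$fHg$ (pour~$f\in F$) l'est, et l'union finie
\[
FHg = \bigcup_{f\in F} fHg
\]
est encore semi-algébrique. L'intersection~$(FHg)\cap U$ de deux parties semi-algébriques de~$G$ est alors semi-algébrique, \emph{a fortiori} définissable dans toute structure o-minimale, ce qui est la conclusion voulue.

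Je n'anticipe donc aucun obstacle sérieux: le présent corollaire n'est que l'instance ponctuelle~$V=\{g\}$ de la version familiale (Corollaire précédent), et toute la difficulté réelle est concentrée en amont, dans la Proposition antérieure.
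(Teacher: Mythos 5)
Votre démonstration est correcte et suit essentiellement le même chemin que le texte: le papier obtient cet énoncé comme le cas~$V=\{g\}$ du corollaire familial précédent, dont la preuve est précisément l'argument que vous déroulez (réduction à~$FH$ fini via la Proposition, puis semi-algébricité de~$FHg$ et de son intersection avec~$U$). Rien à redire.
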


\subsection{Rappels de dynamique homogène}\label{sec7}
\subsubsection{Haut et Bas}
Baptisons l'application quotient
\(
\theta:G\to \Gamma\sous G
\), visualisée verticalement:
\begin{equation}\label{application verticale}
\begin{tikzcd}
G \arrow{d}\\ \Gamma\sous G
\end{tikzcd}
\end{equation}
Nous allons traiter de mesures à deux étages: \flqq{}~en haut~\frqq{} dans~$G$; et \flqq{}~en bas~\frqq{} dans~$\Gamma\sous G$. Notre but sera d'étudier des objets géométriques en bas par des mesures canoniquement associées. Les énoncés précédents vont s'appliquer aux mesures en haut. 

%

\subsubsection{Mesures de probabilité en bas (Théorie de Ratner)} 

\begin{definition}
Un sous-groupe algébrique~$\HH\leq \G$ défini sur $\Q$ est dit de type $ \mathcal{H}$ si son radical $\mathbf{R}_H$ est unipotent et si $\HH/\mathbf{R}_H$ est un produit presque direct de sous-groupes $\Q$-simples dont les points réels ne sont pas compacts.  Le sous-groupe arithmétique~$\Gamma\cap H\subset\HH(\Q)\cap H\subset H$ est alors un réseau. Il y a donc une mesure de probabilité sur~$(\Gamma\cap H)\sous H$ invariante à droite par~$H$. On note $\mu_\HH$ la mesure sur~$\Gamma\sous G$ donnée par l'image directe de cette probabilité invariante par l'application d'identification puis le plongement fermé (\cite[Th.~1.13]{Raghunathan})
\[
(\Gamma\cap H)\sous H \simeq \Gamma \sous \Gamma \cdot H \hookrightarrow \Gamma\sous G.
\]
\end{definition}

\begin{definition}\label{écriture} Une mesure (de probabilité) \emph{de type Ratner}~$\mu$ sur~$\Gamma\sous G$ est une mesure de la forme
\[
\mu=\mu_\HH\cdot g
\]
translatée par~$g$  de la mesure~$\mu_\HH$, avec~$\HH\leq \G$ un sous-groupe algébrique de type~$\mathcal{H}$ et~$g$ dans~$G$.
\end{definition}

\paragraph{Choix d'\flqq{}~écriture\frqq{}}
Si une mesure~$\mu$ de type Ratner peut  s'écrire de la forme
\[
\mu=\mu_{\HH}\cdot g
\]
avec un couple~$(\HH,g)$, ce couple n'est pas uniquement déterminé par~$\mu$. 
\begin{definition}\label{def ecritures}
Pour~$\mu$ une mesure de probabilité de type Ratner sur~$\Gamma\sous G$, nous appellerons \emph{écriture} de~$\mu$ un couple~$(\HH,g)\in\mathcal{H}\times G$ tel que~$\mu=\mu_\HH\cdot g$.
\end{definition}
Le groupe algébrique~$\HH$ n'est déterminé qu'à conjugaison près par les éléments de~$\Gamma$; et~$\HH$ étant choisi,~$g$ n'est déterminé qu'à translation à gauche par~$H$.

En revanche~$\mu$ et~$g$ étant connus, on retrouve~$H$ comme composante neutre du stabilisateur de~$\mu\cdot g^{-1}=\mu_\HH$ pour l'action de~$G$ par translation, et puis~$\HH$ comme adhérence de Zariski de~$H$ dans~$\G$ (sur~$\Q$).\footnote{L'adhérence de Zariski sur~$\R$ sera en fait définie sur~$\Q$.}

\subsubsection{Mesures localement finies en haut}\label{becar}

Il y a une correspondance bijective entre
\begin{itemize}
\item  les mesures en haut~$\nu$ localement finies et invariantes par translation à gauche par des élements de~$\Gamma$, 
\item et les mesures~$\nu$ localement finies en bas.
\end{itemize}
Cette correspondance dépend d'une normalisation, le choix d'une mesure de Haar (positive)~$\nu_\Gamma$ sur~$\Gamma$, et nous choisissons la mesure de comptage
\[
\nu_\Gamma(B)=\# B\in\{0;1;\ldots;+\infty\}\text{ pour }B\subseteq \Gamma,
\]
mais peu nous importe en définitive. Nous noterons cette correspondance à la manière de la référence~\cite{BBKINT}, au moyen des opérations de quotient et relèvement
\begin{equation}\label{correspondance vague}
\nu\mapsto\mu=\nu_\Gamma\sous \nu\qquad \mu\mapsto\nu=\mu^\sharp.
\end{equation}
La notion générale de \emph{mesure quotient} est développée dans~\cite[INT VII \S2 No. 2 Def.~1 VII.31]{BBKINT}, dans la généralité idiosyncratique, dans l'approche de la théorie de la mesure qui leur est propre. Un cas plus particulier qui nous correspond est~\cite[INT VII \S3 Th.~4 VII.52]{BBKINT}. Un traitement plus classique de notre cas se trouve  dans~\cite[Ch.~I]{Raghunathan}.

\paragraph{} La mesure~$\mu$ n'est pas l'image directe~$\theta_\star\nu$, laquelle n'est pas localement finie, sauf à être nulle, et alors~$\nu$ aussi.

\paragraph{} Sauf cas très particuliers,~$\Gamma$ est infini. Alors la mesure~$\nu$ ne sera pas finie, même si~$\mu$ est finie, sauf à être nulle, et alors~$\mu$ aussi.

\paragraph{} Les mesures~$\nu$ et~$\mu$ se correspondent si et seulement si elles sont localement égales, une fois comparées au moyen d'un homéomorphisme local provenant de~$\theta$. Cela signifie que si~$\mu$ et~$\nu$ se correspondent et si~$\theta$ est injective sur un ouvert~$U\subseteq G$ alors
\[
\nu(\theta(U))=\mu(U)
\]  
et réciproquement que si l'identité vaut pour tout tel ouvert~$U$, alors~$\mu$ et~$\nu$ se correspondent.




\subsection[Résultats de Mozes-Shah]{Résultats de Mozes-Shah (basés sur ceux de Ratner)} \label{secMS}

Mettons en place une terminologie pour exposer les informations que nous tirons de~\cite{MoSh}
concernant le comportement typique des suites de mesures de type Ratner dans~$\Gamma\sous G$.
\begin{definition} Nous dirons qu'une suite de mesures de type Ratner~\((\mu_i)_{i\geq 0}\) est \emph{de la forme Mozes-Shah (dans~$G$)} s'il existe des écritures (cf. Def.~\ref{def ecritures})
\[
\mu_i=\mu_{\HH_i}\cdot g_i
\]
où
\begin{itemize}
\item[(M.-S.~1)] la suite~$(g_i)_{i\geq 0}$ est convergente, de limite notée~$g_\infty$;
\item[(M.-S.~2)] parmi les sous-groupes algébriques de~$G$ (sur~$\R$ ou~$\Q$) peu importe) contenant une infinité des~$\HH_i$, il en existe un unique minimal, que nous noterons~$\HH_\infty=\limsup_{i\geq 0} \HH_i$.
\end{itemize}
Nous dirons alors que~$(\HH_i,g_i)_{i\geq 0}$ est \emph{une écriture de type Mozes-Shah} (dans~$G$) de la suite~$(\mu_i)_{i\geq 0}$.

\end{definition}

Nous avons choisi de décomposer les résultats de Mozes-Shah en trois énoncés, pas indépendants dans leur ensemble. Ce premier, très utile pour deviner les mesures limites d'une suite étudiée, est ramené par ces auteurs aux Théorèmes de Ratner. 
\begin{theorem} \label{MS1}
Une suite~$(\mu_i)_{i\in\N}$ de la forme Mozes-Shah a une limite (étroite)~$\mu_\infty=\lim_{i\in\N}\mu_i$ et, pour une écriture de type Mozes-Shah, sa limite admet l'écriture
\[
\mu_\infty=\mu_{\HH_\infty}\cdot g_\infty,
\]
avec~$\HH_\infty$ et~$g_\infty$ tels que dans la définition.
\end{theorem}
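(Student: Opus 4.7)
The plan is to deduce the theorem from the main result of \cite{MoSh} by way of two reductions: one handling the translations~$g_i$ by continuity of the $G$-action on probability measures, and one identifying the limiting algebraic subgroup produced by Mozes-Shah with the group~$\HH_\infty$ specified in condition (M.-S.~2).

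The first step is to reduce to the untranslated setting. Setting~$h_i = g_\infty^{-1} g_i$, we have~$h_i \to e$ and~$\mu_i = (\mu_{\HH_i} \cdot g_\infty) \cdot h_i$. Using that the right action of~$G$ on the space of probability measures on~$\Gamma\bs G$ is jointly continuous on tight families in the narrow topology, it suffices to prove that~$\mu_{\HH_i}$ converges narrowly to~$\mu_{\HH_\infty}$, from which~$\mu_i \to \mu_{\HH_\infty}\cdot g_\infty$ follows. Tightness of the sequence~$(\mu_{\HH_i})$ is furnished by Dani-Margulis \cite{DaMa}: since each~$\HH_i$ embeds into the fixed algebraic subgroup~$\HH_\infty$ of type~$\mathcal{H}$, a uniform non-divergence estimate applies, and Prokhorov then gives narrow precompactness.

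The second step applies the main theorem of \cite{MoSh} to any narrow subsequential limit~$\mu_\star$ of~$(\mu_{\HH_i})$: such a~$\mu_\star$ is a Ratner-type measure~$\mu_F$, with~$F$ an algebraic subgroup of~$\G$ of type~$\mathcal{H}$, and moreover for cofinitely many indices~$i$ along the subsequence one has~$\HH_i \subseteq F$ (possibly after reindexing to absorb the $\Gamma$-conjugations that appear in the Mozes-Shah statement into the ambient algebraic structure, compatibly with the intrinsic formulation of (M.-S.~2)). One then identifies~$F$ with~$\HH_\infty$: since~$F$ is algebraic and contains infinitely many~$\HH_i$, minimality in (M.-S.~2) gives~$\HH_\infty \subseteq F$; conversely,~$F$ is characterised by Mozes-Shah as the (connected component of the) stabiliser of the limit measure, and~$\mu_{\HH_\infty}$ is already a homogeneous measure supporting all the orbits~$\Gamma\HH_i\bs\Gamma$, so minimality of~$F$ forces~$F \subseteq \HH_\infty$. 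Hence~$F = \HH_\infty$, every subsequential limit equals~$\mu_{\HH_\infty}$, and the whole sequence~$(\mu_{\HH_i})$ converges narrowly to~$\mu_{\HH_\infty}$, as required.

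The main obstacle will be the cleanup in the Mozes-Shah application: in general \cite{MoSh} only furnishes~$\gamma_i \HH_i \gamma_i^{-1} \subseteq F$ for some~$\gamma_i \in \Gamma$, whereas condition (M.-S.~2) is stated intrinsically about the groups~$\HH_i$ themselves. Making these two match --- either by showing that the~$\gamma_i$ can be taken trivial once (M.-S.~2) is in force, or by explicitly passing to an adequately conjugated subsequence --- is the technical heart of the proof, and is the point at which the strength of the Mozes-Shah form assumption is actually used.
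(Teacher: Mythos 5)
The paper itself offers no proof of Théorème~\ref{MS1}: it is presented as a repackaging of the results of \cite{MoSh} (``est ramené par ces auteurs aux Théorèmes de Ratner''), the definition of the forme Mozes--Shah being tailored so that \cite{MoSh} delivers exactly this conclusion. Your attempt to reconstruct that deduction must therefore be judged on its own terms. Its overall architecture (peel off the translations $g_i$ by continuity and tightness, apply \cite{MoSh} to subsequential limits of $(\mu_{\HH_i})_{i\in\N}$, then identify the limiting group with $\HH_\infty$) is the natural one, and the first step is fine. The identification step, however, contains two genuine gaps.

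First, the inclusion $F\subseteq\HH_\infty$ is not established. There is no ``minimality of $F$'' in the Mozes--Shah conclusion to invoke: $F$ is simply the invariance group of the limit measure, and nothing characterises it as minimal among groups whose orbit carries all the $\Gamma\backslash\Gamma H_i$. Moreover your premise that $\mu_{\HH_\infty}$ ``supports all the orbits $\Gamma\backslash\Gamma H_i$'' presupposes $\HH_i\subseteq\HH_\infty$ for all (or cofinitely many) $i$, whereas condition (M.-S.~2) only asserts that $\HH_\infty$ contains \emph{infinitely many} of the $\HH_i$ and is the unique minimal algebraic subgroup with that property; along the particular subsequence realising the limit $\mu_\star$, the containment $\HH_i\subseteq\HH_\infty$ is precisely what has to be argued, not assumed. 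The correct mechanism for $F\subseteq\HH_\infty$ goes through supports (in effect the Lemme~\ref{lemme faible vietoris} applied to the constant closed set $\Gamma\backslash\Gamma H_\infty$): once $\HH_i\subseteq\HH_\infty$ is secured along the subsequence, each $\mu_{\HH_i}$ is concentrated on that closed set, hence so is $\mu_\star$, which forces $F\subseteq H_\infty$. Second, you explicitly defer what you yourself call the technical heart: \cite{MoSh} a priori yields only $\gamma_i\HH_i\gamma_i^{-1}\subseteq \mathbf{F}$ for some $\gamma_i\in\Gamma$, and the passage to on-the-nose inclusions --- needed for \emph{both} directions of the identification $\mathbf{F}=\HH_\infty$ --- is left open. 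A proposal that names its central difficulty without resolving it is an outline, not a proof.
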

Un point central pour nous est que
\begin{equation}\label{dimension monte}
\dim(\HH_\infty)\geq \limsup\dim(\HH_i)\text{ soit }
\dim(H_\infty)\geq \limsup\dim(H_i)
\end{equation}
en utilisant la dimension comme variété algébrique puis comme groupe de Lie (i.e. comme variété différentielle).

Les  énoncés suivants indiquent que les situations de la forme Mozes-Shah, loin d'être particulières, capturent toutes les situations de convergence. 
\begin{theorem}  \label{MS extraite}
(i) Toute suite \emph{tendue} de mesures de type Ratner contient une sous-suite infinie de la forme Mozes-Shah.

(ii) Toute suite \emph{\underline{étroitement} convergente} de mesures de type Ratner est de la forme Mozes-Shah.
\end{theorem}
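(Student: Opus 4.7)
The plan is to obtain (i) as a corollary of (ii) via Prokhorov, and to derive (ii) from the main rigidity statement of \cite{MoSh} supplemented by a Noetherian descent argument on algebraic subgroups of $\G$.

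\emph{From (ii) to (i).} Given a tight sequence $(\mu_i)_{i\in\N}$ of Ratner-type probability measures on $\Gamma\backslash G$, tightness supplies a compact subset of $\Gamma\backslash G$ carrying almost all the mass uniformly in $i$; Prokhorov's theorem (cf. Théorème~\ref{Prokhorov}, applied in a suitable compactification) then yields a narrowly convergent subsequence whose limit $\mu_\infty$ is still a probability measure, because no mass escapes. Applying (ii) to this subsequence directly produces the infinite subsequence of Mozes--Shah form required by (i).

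\emph{Proof of (ii).} Fix a narrowly convergent sequence $\mu_i \to \mu_\infty$ and start from arbitrary initial writings $\mu_i = \mu_{\HH_i}\cdot g_i$. The main theorem of \cite{MoSh}---itself a consequence of Ratner's measure classification combined with Dani--Margulis non-escape estimates---asserts first that $\mu_\infty$ is itself Ratner-type, say $\mu_\infty = \mu_{\tilde \HH}\cdot g_\infty$, and second that the writings can be adjusted (replacing each $\HH_i$ by a suitable $\Gamma$-conjugate and each $g_i$ by a left translate $h_i g_i$ with $h_i\in H_i$) so that $g_i \to g_\infty$ on the entire sequence and $H_i \subseteq \tilde H$ for all sufficiently large $i$. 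This delivers condition (M.-S.~1). For (M.-S.~2), let $\mathcal F$ denote the family of algebraic subgroups of $\G$ that contain $\HH_i$ for infinitely many~$i$; this family is non-empty since $\tilde\HH \in \mathcal F$. The descending chain condition on algebraic subgroups of $\G$ furnishes a minimal element $\HH_\infty \in \mathcal F$. Uniqueness follows from a standard intersection argument: if $\HH_\infty'$ were another minimal element, with index set $J'$ witnessing its membership in $\mathcal F$, then the intersection of $J'$ with the cofinite set $\{i : H_i \subseteq \tilde H\}$ is still infinite, so on this set $\HH_i\subseteq \HH_\infty\cap \HH_\infty'$; thus $\HH_\infty\cap \HH_\infty' \in \mathcal F$ is algebraic and contained in each, and minimality of both forces $\HH_\infty\cap \HH_\infty' = \HH_\infty = \HH_\infty'$.

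\emph{Main obstacle.} The substantive content is the invocation of \cite{MoSh}: one must know both the algebraicity of the limit measure and that the writings can be arranged so the group parameters $g_i$ converge along the \emph{entire} sequence, not merely along an extracted subsequence, with the $H_i$ landing inside a common ambient algebraic group. Once this Ratner-theoretic input is secured, the remainder is formal bookkeeping---Prokhorov on one side and the Noetherian descent on algebraic subgroups of $\G$ on the other.
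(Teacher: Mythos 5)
Un point de cadrage d'abord : le texte ne démontre pas ce théorème ; il le présente explicitement comme un reconditionnement des résultats de \cite{MoSh} (« Nous avons choisi de décomposer les résultats de Mozes-Shah en trois énoncés »). Votre stratégie — tout ramener au théorème principal de Mozes--Shah, avec Prokhorov pour déduire (i) de (ii) — est donc dans le même esprit que le texte, et la réduction de (i) à (ii) est correcte : la tension donne une sous-suite étroitement convergente vers une mesure de probabilité, à laquelle on applique (ii).

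En revanche, votre vérification de (M.-S.~2) contient une erreur réelle. Vous prenez $\HH_\infty$ minimal dans $\mathcal F$ avec un ensemble témoin $J$, puis un autre élément minimal $\HH'_\infty$ avec témoin $J'$, et vous affirmez que sur $J'\cap\{i:\HH_i\subseteq\tilde\HH\}$ on a $\HH_i\subseteq\HH_\infty\cap\HH'_\infty$. Or sur cet ensemble on sait seulement $\HH_i\subseteq\HH'_\infty\cap\tilde\HH$ ; l'inclusion $\HH_i\subseteq\HH_\infty$ n'est garantie que pour $i\in J$, et $J\cap J'$ peut être fini. Votre argument ne livre donc que $\HH'_\infty\subseteq\tilde\HH$, pas $\HH'_\infty=\HH_\infty$. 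Qu'il soit irrécupérable tel quel se voit sur l'exemple combinatoire $\HH_{2i}=\mathbf A$, $\HH_{2i+1}=\mathbf B$ avec $\mathbf A,\mathbf B\subsetneq\tilde\HH$ incomparables : $\mathcal F$ a alors deux éléments minimaux distincts, alors que votre raisonnement (qui n'utilise à cet endroit que l'inclusion $\HH_i\subseteq\tilde\HH$ pour $i$ grand, et non la convergence) conclurait à l'unicité. L'ingrédient manquant est précisément la convergence de la suite \emph{entière} : toute sous-suite converge vers la même limite $\mu_\infty$, donc tout élément minimal de $\mathcal F$, étant (par le Théorème~\ref{MS1} appliqué à la sous-suite témoin) un groupe d'écriture de $\mu_\infty$, est $\Gamma$-conjugué à $\tilde\HH$ ; il faut ensuite réajuster les écritures (remplacer les $\HH_i$ par des $\Gamma$-conjugués convenables et translater les $g_i$ en conséquence) pour rendre ces éléments minimaux littéralement égaux. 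C'est cette étape de réécriture, et non une intersection d'éléments minimaux, qui assure l'unicité exigée par (M.-S.~2).
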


\begin{theorem}\label{Critère tendue} Une suite de mesures de probabilités de type Ratner~$(\mu_i)_{i\geq0}$ est tendue si et seulement si il existe une suite d'écritures
\[
\mu_i=\mu_{\HH_i}\cdot g_i
\]
où la suite~$(g_i)_{i\geq 0}$ est \emph{bornée} dans~$G$: il existe une partie compacte~$C$ de~$G$ contenant chacun des~$g_i$.
\end{theorem}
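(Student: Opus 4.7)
Le plan est d'établir les deux implications séparément, la substance résidant dans le sens direct via une version uniforme du théorème de non-divergence de Dani-Margulis.

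Pour l'implication \og{}écriture bornée $\Rightarrow$ tendue\fg{}, je fixerais un compact $C \subset G$ contenant tous les $g_i$, et invoquerais le théorème de non-divergence de Dani-Margulis~\cite{DaMa} dans sa forme uniforme en le groupe $\HH$ et en le point $g$: pour tout $\varepsilon > 0$, il existe un compact $K_\varepsilon \subset \Gamma\sous G$ tel que pour tout $\HH$ de type $\mathcal{H}$ défini sur $\Q$ et tout $g \in C$,
\[
(\mu_\HH \cdot g)(K_\varepsilon) \geq 1 - \varepsilon.
\]
Cette uniformité repose sur le fait que $H$ est engendré par ses sous-groupes unipotents à un paramètre, conséquence de l'hypothèse selon laquelle $\HH/\mathbf{R}_H$ est un produit presque direct de groupes $\Q$-simples dont les points réels ne sont pas compacts. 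Appliquée aux écritures données $\mu_i = \mu_{\HH_i} \cdot g_i$, on obtient $\mu_i(K_\varepsilon) \geq 1 - \varepsilon$ pour tout $i$, ce qui est précisément la tension au sens de Prokhorov.

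Pour la réciproque \og{}tendue $\Rightarrow$ écriture bornée\fg{}, je procéderais directement à partir du critère de Dani-Margulis rappelé dans le texte: il existe un compact $K \subset \Gamma\sous G$ rencontrant chaque support $\operatorname{Supp}(\mu_i) = \Gamma\sous \Gamma H_i g_i$. Relevant $K$ en une partie bornée $\tilde{K} \subset G$ via un ensemble fondamental de Siegel, j'extrairais, pour chaque $i$, des éléments $\gamma_i \in \Gamma$ et $h_i \in H_i$ tels que $\gamma_i h_i g_i \in \tilde{K}$. En posant $\HH'_i := \gamma_i \HH_i \gamma_i^{-1}$ (encore de type $\mathcal{H}$ et défini sur $\Q$ puisque $\gamma_i \in \G(\Q)$) et $g'_i := \gamma_i h_i g_i$, les mesures $\mu_{\HH'_i} \cdot g'_i$ et $\mu_{\HH_i} \cdot g_i$ sont deux probabilités supportées sur le même sous-ensemble homogène fermé $\Gamma\sous \Gamma H_i g_i$ et invariantes sous le même sous-groupe à droite $g_i^{-1} H_i g_i$, donc coïncident. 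On obtient ainsi une écriture avec $(g'_i)_{i\geq 0}$ bornée dans $G$.

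L'obstacle principal est le sens direct, spécifiquement l'uniformité de la non-divergence par rapport aux groupes $\HH_i$ variables: l'énoncé original de~\cite{DaMa} fixe un unique groupe agissant, et obtenir la version uniforme requiert un contrôle quantitatif sur les excursions des orbites unipotentes dans les pointes qui soit robuste lorsque le groupe agissant varie. Cette forme uniforme est néanmoins bien établie dans la littérature sur la dynamique homogène, implicite dans~\cite{MoSh} et détaillée dans les travaux ultérieurs comme~\cite{DGU1}.
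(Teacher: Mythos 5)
Votre démonstration est correcte dans ses deux sens, mais il faut d'abord signaler que le texte ne démontre pas cet énoncé: le Théorème~\ref{Critère tendue} y figure parmi les résultats \emph{rappelés} de la section~\ref{secMS}, présentés comme conséquences de \cite{MoSh} et de \cite{DaMa} sans argument. Votre proposition reconstitue donc l'argument standard de la littérature plutôt qu'elle ne s'écarte d'une preuve donnée. Sur le fond: pour le sens direct, vous identifiez correctement que toute la substance est l'uniformité de la non-divergence en le groupe $\HH$ (l'uniformité en $g\in C$ est, elle, triviale: si $\mu_{\HH}(K)\geq 1-\varepsilon$ pour tout $\HH$ de type $\mathcal{H}$, alors $(\mu_{\HH}\cdot g)(K\cdot C)=\mu_{\HH}(KCg^{-1})\geq\mu_{\HH}(K)$ dès que $g\in C$). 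La référence précise pour cette uniformité est \cite{EMS97} (Eskin--Mozes--Shah, déjà dans la bibliographie du texte), qui traite exactement des translatés de mesures homogènes; notons aussi que \cite{DaMa} est déjà uniforme en le sous-groupe unipotent à un paramètre et en le point base dans un compact, le passage aux mesures $\mu_{\HH}$ se faisant par moyennisation de Birkhoff le long d'un tel sous-groupe de $H$, ce que votre rédaction laisse implicite. Pour la réciproque, votre manipulation est juste: l'unicité de la probabilité invariante sur l'orbite fermée $\Gamma\sous\Gamma H_i g_i$ sous l'action transitive de $g_i^{-1}H_ig_i$ garantit bien $\mu_{\HH'_i}\cdot g'_i=\mu_{\HH_i}\cdot g_i$, et la conjugaison par $\gamma_i\in\Gamma\subset\G(\Q)$ préserve le type $\mathcal{H}$ ainsi que la propriété de réseau de $\Gamma\cap H'_i=\gamma_i(\Gamma\cap H_i)\gamma_i^{-1}$. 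Seule retouche de rigueur: vous n'utilisez de la définition de la tension que l'implication facile (un compact de masse $\geq 1/2$ rencontre chaque support), ce qui évite toute circularité avec le \og{}critère équivalent\fg{} énoncé dans la définition du texte — il serait bon de le dire explicitement.
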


\subsection{Dynamique Homogène sur les espaces hermitien symétriques.}

Les énoncés de la partie précédente s'appliquent directement aux sous-variétés homogènes et aux sous-variétés faiblement spéciales des espaces symétriques hermitiens. On fixe $\G$ comme dans la partie précédente. On suppose de plus que $\G$ est semi-simple de type adjoint et que l'espace symétrique $X$ de $G$ est hermitien. Dans cette situation $S=\Gamma\backslash X$ est un espace localement symétrique hermitien.
On fixe un point $x_0$ de $X$ et on dispose d'un morphisme
$$
\beta=\beta_{x_0}: \Gamma\backslash G\longrightarrow S=\Gamma\backslash X
$$
$$
\Gamma g\mapsto \Gamma g.x_0
$$

\begin{definition}
\begin{itemize}
\item Une mesure de type Ratner sur $S$ est l'image direct $\nu=\beta_*\mu$ d'une mesure de type Ratner sur $\Gamma\backslash G$.
\item Le support d'une mesure de type Ratner est un sous-espace homogène de $S$. 
\item On observe que les mesures de probabilités associées aux sous-variétés faiblement spéciales sont de type Ratner.
\item Une suite de mesures de type Ratner sur $S$ est dite de Mozes-Shah, si elle est de la forme $(\nu_n)_{n\in \NN}=(\beta_*(\mu_n)_{n\in \NN})$ pour une suite $(\mu_n)_{n\in \NN}$ de type Mozes-Shah de $\Gamma\backslash G$.
\end{itemize}
\end{definition}

Avec ces définitions, les énoncés des Théorèmes \ref{MS1}, \ref{MS extraite} et \ref{Critère tendue} valent pour $S$.
Nous retiendrons pour les applications que nous avons en vue le résultat suivant.
\begin{theorem}\label{MSS}
Soit $(Z_n)_{n\in \NN}$ une suite de sous-variétés faiblement spéciales de $S$.  On suppose que la suite de mesures associée $(\nu_n)_{n\in \NN}$ est tendue. Il existe alors une sous-suite infinie de type Mozes-Shah qui converge vers une mesure de type Ratner $\nu_{\infty}$. En passant à cette sous-suite, on peut écrire 
$$
\nu_n=\beta_* (\mu_{\HH_i}\cdot g_n) \longrightarrow \nu_{\infty}=\beta_*(\mu_{\HH_{\infty}.g_{\infty}})
$$
pour un sous-groupe algébrique $\HH_{\infty}$ de type $\mathcal{H}$  tel que pour tout $n$ assez grand $\HH_n\subset \HH_{\infty}$ et $g_{\infty}= \lim_{n\rightarrow \infty} g_n$.
 
\end{theorem}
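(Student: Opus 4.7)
The plan is to lift everything from $S$ to $\Gamma\backslash G$ through the projection $\beta=\beta_{x_0}$, apply the Mozes--Shah structure theorems of~\ref{secMS}, and push the resulting limit back down. Each weakly special $Z_n=\Gamma\backslash\Gamma\cdot\HH_n(\RR)^+\cdot x_n$ carries, by its very definition~\eqref{Eq-fs}, a canonical probability measure $\nu_n=\beta_\ast\mu_n$ where $\mu_n$ is a Ratner-type measure on $\Gamma\backslash G$. After the standard reduction of replacing $\HH_n$ by the almost direct product of its $\QQ$-simple factors with non-compact real points---which yields the same orbit $\HH_n(\RR)^+\cdot x_n$ up to a right factor contained in the stabiliser $K_\infty$ of $x_0$, and hence the same canonical measure on $Z_n$---we may assume that each $\HH_n$ lies in the class $\mathcal{H}$, and we pick $g_n\in G$ with $g_n\cdot x_0=x_n$, so that $\mu_n=\mu_{\HH_n}\cdot g_n$.

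The first real step is to transfer tightness from $(\nu_n)$ to $(\mu_n)$. Since $\G$ is semisimple adjoint and $X=G/K_\infty$, the map $\beta$ is the quotient by the right action of the compact group $K_\infty$ on $\Gamma\backslash G$ and has compact fibres; the preimage $\beta^{-1}(C)$ of a compact $C\subseteq S$ is therefore compact in $\Gamma\backslash G$, and $\mu_n(\beta^{-1}(C))=\nu_n(C)$ for every Borel $C\subseteq S$. Tightness of $(\nu_n)$ thus implies tightness of $(\mu_n)$.

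Next, applying Theorem~\ref{MS extraite}(i) to $(\mu_n)$ extracts an infinite subsequence in Mozes--Shah form; along this subsequence, Theorem~\ref{MS1} supplies writings $\mu_n=\mu_{\HH_n}\cdot g_n$ with $g_n\to g_\infty$ in $G$, a minimal algebraic subgroup $\HH_\infty=\limsup_{n\in\NN}\HH_n$ containing all but finitely many of the $\HH_n$, and narrow convergence $\mu_n\to\mu_\infty:=\mu_{\HH_\infty}\cdot g_\infty$. Because each $\HH_n$ is of type $\mathcal{H}$ and $\HH_\infty$ arises as a Mozes--Shah limsup of type-$\mathcal{H}$ groups, it is itself of type $\mathcal{H}$, so that $\mu_\infty$ is again of Ratner type.

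Finally, pushforward by $\beta_\ast$ is continuous for narrow convergence of probability measures, so $\nu_n=\beta_\ast\mu_n\longrightarrow\beta_\ast\mu_\infty=\beta_\ast(\mu_{\HH_\infty}\cdot g_\infty)$, which is the claimed $\nu_\infty$. The deep content is entirely absorbed into the Mozes--Shah extraction step~\ref{MS extraite}(i), itself a consequence of Ratner's classification of unipotent-invariant measures; the only effort proper to the present statement is the compact-fibre tightness transfer through $\beta$ and the initial reduction from reductive $\HH_n$ to type-$\mathcal{H}$ subgroups, both of which are routine given Moonen's description of weakly special subvarieties.
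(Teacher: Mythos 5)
Votre démonstration suit exactement la voie que l'article sous-entend sans la détailler : relever les mesures de $S$ à $\Gamma\backslash G$, transférer la tension via la propreté de $\beta$ (fibres compactes $K_\infty$), invoquer les Théorèmes~\ref{MS extraite}(i) et~\ref{MS1}, puis redescendre par continuité de $\beta_\ast$ pour la convergence étroite. C'est correct et conforme à l'approche du texte, qui se contente d'affirmer que les énoncés de Mozes--Shah « valent pour $S$ » ; votre rédaction en est la vérification de routine.
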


\subsection{Le \emph{"baby case" du Théorème d'Ax-Lindemann hyperbolique.}}  

Le résultat suivant est une conséquence simple du Théorème d'Ax-Lindemann hyperbolique (ALH) démontré dans \cite{KUY} en utilisant le Lemme 4.1 de \cite{PT}. Une preuve rapide qui n'utilise pas toute la force de 
ALH est donnée dans la Proposition 2.6 de \cite{UY5}. 

\begin{proposition}[{\cite{KUY}}] \label{BAL}
Soit~$\HH$ un sous-groupe algébrique de $\G$ de type $\mathcal{H}$. Soit~$x$ un point de~$X$ et soit 
$$
[H\cdot x]:=\{\Gamma\cdot h\cdot x\in \Gamma\sous X~|~h\in H\}\subseteq S
$$
l'image dans~$S$ de l'orbite de~$H$ dans~$X$ passant par~$x$.

Alors une composante irréductible de  l'adhérence de Zariski de~$[H\cdot x]$ dans~$S$ est une sous-variété faiblement spéciale.

\end{proposition}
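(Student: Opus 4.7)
Plan. La stratégie est de combiner le théorème de clôture d'orbite de Ratner avec la classification des sous-variétés faiblement spéciales due à Moonen \cite{Moonen1}. L'idée directrice est d'identifier d'abord la clôture topologique de $[H\cdot x]$ dans $S$ via Ratner, puis de l'incorporer dans une sous-variété faiblement spéciale canonique, et enfin de vérifier que celle-ci coïncide avec une composante irréductible de la clôture Zariski.

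Étape 1 (clôture topologique). Relevons $x$ en $g_0 \in G$ tel que $g_0 \cdot x_0 = x$. Comme $\HH$ est de type $\mathcal{H}$, le groupe $H$ est engendré par des sous-groupes à un paramètre $\Ad$-unipotents, et le théorème de clôture d'orbites de Ratner, appliqué à l'action à droite de $H$ sur $\Gamma\sous G$, entraîne que la clôture topologique $\overline{\Gamma H g_0}$ est de la forme $\Gamma L g_0$ pour un sous-groupe algébrique connexe $L \supseteq H$ défini sur $\Q$ (la rationalité résultant de la densité de Borel), avec $\Gamma \cap L$ réseau dans $L$. Poussant par $\beta_{x_0}: \Gamma\sous G \to S$, la clôture topologique de $[H \cdot x]$ dans $S$ est le sous-ensemble homogène $[L \cdot x]$.

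Étape 2 (enrichissement réductif et conclusion). Soit $L = L^{\mathrm{red}} \ltimes R_u(L)$ une décomposition de Levi, et soit $\mathbf{M}$ le plus petit $\Q$-sous-groupe réductif de $\G$ contenant $L^{\mathrm{red}}$ tel que $M \cdot x$ soit un sous-espace hermitien symétrique de $X$ contenant $L \cdot x$. Par le théorème de Moonen \cite{Moonen1}, $[M \cdot x]$ est une sous-variété faiblement spéciale de $S$, et l'on a $[H \cdot x] \subseteq [L \cdot x] \subseteq [M \cdot x]$; par minimalité de $\mathbf{M}$, l'ensemble $L \cdot x$ est ouvert dans $M \cdot x$, donc $[L \cdot x]$ est ouvert dans $[M \cdot x]$. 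Soit $V$ la composante irréductible de $\overline{[H \cdot x]}^{\Zar}$ contenant le connexe $[H \cdot x]$: puisque $[M\cdot x]$ est une sous-variété algébrique contenant $[H \cdot x]$, on a $V \subseteq [M \cdot x]$; réciproquement, $V$ est analytiquement fermée et contient $[H \cdot x]$, donc contient sa clôture analytique $[L \cdot x]$, qui est un ouvert non vide de l'irréductible $[M \cdot x]$. Comme toute sous-variété Zariski-fermée d'une variété algébrique irréductible contenant un ouvert analytique non vide lui est égale, on obtient $V = [M \cdot x]$, qui est faiblement spéciale.

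Obstacle principal. Le point délicat se trouve dans la construction de $\mathbf{M}$ à l'étape~2: il faut garantir simultanément que $\mathbf{M}$ est réductif et défini sur $\Q$, que $M \cdot x$ est hermitien symétrique, et surtout que $L \cdot x$ y est ouvert. Le traitement du radical unipotent $R_u(L)$ exige de vérifier que son action sur $x$ est absorbée par l'enrichissement réductif, ce qui repose sur la structure spéciale des sous-groupes de type $\mathcal{H}$ et sur leur comportement vis-à-vis du point hermitien $x$. Une alternative plus axiomatique, suivie dans \cite{UY5} Proposition~2.6, utilise le Lemme 4.1 de \cite{PT} pour trouver suffisamment de points rationnels dans la clôture Zariski réelle de $H$ et conclure directement que $V$ est faiblement spéciale, contournant ainsi la construction explicite de $\mathbf{M}$.
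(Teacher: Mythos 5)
Votre étape 1 est correcte mais n'apporte rien : comme $\HH$ est de type $\mathcal{H}$ et défini sur $\Q$, $\Gamma\cap H$ est déjà un réseau de $H$, l'orbite $\Gamma\sous\Gamma H g_0$ est fermée dans $\Gamma\sous G$ (Raghunathan, Th.~1.13, cité dans le texte), et son image par l'application propre $\beta_{x_0}$ est fermée ; donc $[H\cdot x]$ est déjà fermé et $L=H$. Tout le contenu de la proposition est concentré dans votre étape 2, et c'est là que l'argument échoue. D'abord, l'existence d'un \emph{plus petit} $\Q$-sous-groupe réductif $\mathbf{M}$ contenant $L^{\mathrm{red}}$ et d'orbite hermitienne symétrique n'est pas justifiée (une intersection de tels groupes n'en est pas un en général). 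Surtout, l'affirmation \og{}par minimalité de $\mathbf{M}$, $L\cdot x$ est ouvert dans $M\cdot x$\fg{} est fausse : si $L\cdot x$ était ouvert dans $M\cdot x$, ce sous-espace totalement géodésique complet de même dimension que l'espace symétrique connexe $M\cdot x$ lui serait égal, autrement dit $H\cdot x$ serait lui-même hermitien symétrique. Or c'est précisément ce qui tombe en défaut en général : pour $\HH$ de type $\mathcal{H}$, l'orbite $H(\R)^+\cdot x$ est une sous-variété réelle totalement géodésique qui n'a aucune raison d'être complexe (exemple classique : $\SL_2\hookrightarrow\mathbf{Sp}_4$ par $\mathrm{Sym}^3$, dont l'orbite dans l'espace de Siegel $\mathbb{H}_2$ est un disque géodésique non holomorphe, de dimension réelle $2$ alors que toute orbite hermitienne le contenant est de dimension réelle au moins $4$).

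C'est exactement cette possibilité — que l'ensemble réel-analytique $[H\cdot x]$ soit de dimension strictement inférieure à celle de son adhérence de Zariski — qui fait de l'énoncé un résultat de transcendance fonctionnelle, et votre conclusion $V=[M\cdot x]$ repose entièrement sur l'ouverture non démontrée. L'ingrédient manquant est l'argument de type Ax--Lindemann (Lemme 4.1 de \cite{PT}, ou \cite{UY5} Prop.~2.6, ou \cite{KUY}), que vous signalez dans votre \og{}obstacle principal\fg{} mais ne fournissez pas ; le texte lui-même ne redémontre d'ailleurs pas cette proposition et renvoie à ces références. Tel qu'écrit, votre schéma ne constitue donc pas une démonstration.
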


\subsection{Dynamique homogène en présence de perte de masse.}
\subsubsection{Introduction}
Pour les suites~$(\mu_i)_{i\geq0}$ de mesures de type Ratner dans~$\Gamma\sous G$, un Théorème de Dani-Margulis implique l'alternative suivante:
\begin{itemize}
\item ou bien la suite~$(\mu_i)_{i\in\N}$ contient une sous-suite tendue, donc une sous-suite étroitement convergente (donc une suite de type Mozes-Shah)
\item ou bien la suite~$(\mu_i)_{i\in\N}$ converge faiblement vers~$0$.
\end{itemize}
Le Théorème de Dani-Margulis va un peu plus loin et implique par exemple que, dans le second cas les groupes $\HH_n$ de type ${\mathcal H}$ peuvent être choisi tels qu'une infinité d'entre eux sont contenus dans un même sous-groupe parabolique propre défini sur~$\Q$.

Les travaux de~\cite{DGU1,DGU2} étudient plus précisément ce phénomène en complétant les travaux de Mozès-Shah. Si les derniers utilisent quasi-exclusivement les Théorèmes de Ratner, les premiers utilisent aussi des propriétés fines de la théorie de la réduction associée aux groupes arithmétiques.

\subsubsection{Sous-groupes paraboliques et chambres de Weyl associés} Nous commençons par introduire quelques notations relatives à un sous-groupe parabolique. Une référence standard, complète et détaillée est~\cite[III.1]{BorelJi} dont nous rappelons les quelques constructions qui nous serviront.

On suppose donnés un sous-groupe parabolique~$\PP\leq \G$ défini sur~$\Q$, et un sous-groupe compact maximal~$K_{\infty}\leq \G(\R)$ dont l'involution de Cartan globale est notée~$\Theta:G\to G$.

Le sous-groupe~$L=P\cap\Theta(P)$ est un sous-groupe réductif (défini sur~$\R$) maximal de~$P$, aussi dit  \emph{sous-groupe de Levi}. Si~$\NNN$ désigne le plus grand sous-groupe algébrique unipotent de~$\PP$ (défini sur~$\R$ ou~$\Q$, cela revient au même), aussi dit \emph{radical unipotent}, alors on a la \emph{décomposition de Lévi} (définie sur~$\R$)
$$
P=N\cdot L,
$$
et $N\cap L=\{e\}$. 

Le quotient~$\mathbf{Q}=\PP/\NNN$ est un groupe algébrique défini sur~$\Q$. C'est le plus grand quotient réductif de~$\PP$, parfois dit \emph{(groupe de) Levi quotient}.
Le centre de~$\mathbf{Q}$ contient un plus grand sous-tore~$\mathbf{S}_Q\leq \mathbf{Q}$ défini et déployé sur~$\Q$. Comme on a un isomorphisme, algébrique sur~$\R$
$$
L_\R\to Q_\R 
$$
$\mathbf{S}_Q$ se relève en un tore~$A\leq L$ défini sur~$\R$, et déployé sur~$\R$ (pas forcément maximal) et on a une décomposition presque directe

$$
L=MA.
$$
De plus le groupe $H_P= NM$ provient d'un $\QQ$-sous-groupe algébrique $ \mathbf{H}_P$ de $\G$. Dans cette situation $\Gamma_P=\Gamma \cap \mathbf{H}_P$ est un réseau de 
$\mathbf{H}_P$.

On dispose alors  des décompositions de Langlands associées
\begin{equation}\label{eqLang}
P=NMA \ \ \ \ \ \ \    G=PK_{\infty}=NMAK_{\infty} \ \ \ \ \ \ \ \    X=NX_PA
\end{equation}
où $X_P=M/(K_{\infty}\cap M)$.
On écrira $g=n_gm_ga_gk_g$ (ou simplement $g=nmak$ si cela ne prête pas à confusion)  la décomposition de Langlands d'un élément $g\in G$ relativement à $\PP$ et à $K_{\infty}$.

 On note
\[
A^+\leq A(\R)
\]
la composante neutre de $A(\R)$ au sens des groupes de Lie. La donnée du sous-groupe parabolique~$P$ contenant~$A$ permet de définir une chambre de Weyl positive
comme suit. La représentation adjointe \underline{\em à droite} de~$P$ sur~$\mathfrak{g}$ 
\[
X\cdot p = p^{-1} X p.
\]
stabilise la sous-algèbre de Lie~$\mathfrak{n}$ associée à~$N$.
L'action de~$A$ est simultanément diagonalisable sur~$\R$ et les espaces propres simultanés sont de la forme~$\mathfrak{n}_\alpha$ associés à des caractères algébriques réels
\[
\alpha:A\to GL(1)_\R
\]
par
\[
\mathfrak{n}_\alpha=\{X\in\mathfrak{n}|\forall a\in A(\R), a\cdot X=\alpha(a)\cdot X\}.
\]
Nous noterons
\[
\Phi(A,P)
\]
l'ensemble des caractères de~$A$ tels que~$\mathfrak{n}_\alpha\neq \{0\}$. Nous considérerons la chambre positive
\[
A^+_{\geq 0}=\{a\in A^+|\forall \alpha\in\Phi(A,P), \alpha(a)\ge 1\}
\]
et pour~$t\in \R^+$ tel que $t\ge 1$, on note
\[
A^+_{\ge t }= \{a\in A^+|\forall \alpha\in\Phi(A,P), \alpha(a')\ge t\}.
\]

\subsubsection{Ensembles de Siegel}
Soit $\PP\leq \G$ un sous-parabolique  défini sur~$\Q$ et~$K_{\infty}\leq \G(\R)$ un sous-groupe compact maximal, nous définissons une classe de sous-ensembles de~$G$.
On écrit suivant la décomposition de Langlands (\ref{eqLang})
$$
P=NMA=H_PA.
$$
 Soit $V$ un ouvert borné  semi-algébrique de $H_P=NM$ et $t\ge  1$.

L'ensemble 
$$
\Sigma(\PP,V,t,K_{\infty}):=VA^+_{\ge t}K_{\infty}
$$
est appelé ensemble de Siegel dans $G$  relativement à $\PP$. Si $M$ est un sous-groupe compact de $K_{\infty}$ et $\alpha_M:G\longrightarrow G/M$ est l'application quotient alors 
$$
\Sigma_M(\PP,V,t,K_{\infty}):=\alpha_M(\Sigma(\PP,V,t,K_{\infty}))
$$
est appelé ensemble de Siegel dans $G/M$ relativement à $\PP$. Notons que la condition de semi-algébricité sur $V$ que nous imposons n'est pas usuellement requise dans la littérature, mais nous ne considérons que des ensembles de Siegel avec cette condition sur $V$ dans la suite.

 La théorie de la réduction est résumée dans la Proposition suivante [\cite{BorelJi} , Proposition III.2.19, p. 284] 
\begin{proposition}\label{Siegel}
Soit $K_{\infty}$ un compact maximal fixé de $G$. Soit $(\PP_1,\dots,\PP_r)$ un ensemble de représentants des $\Gamma$-classes de conjugaison de $\QQ$-sous groupes paraboliques de $\G$.
\begin{itemize}
\item (i) Il existe des ensembles de Siegel
$\Sigma(\PP_i,V_i,t_i, K_{\infty}):=V_iA^+_{i,\ge t}K_{\infty}$ de $G$ associés aux $\PP_i$ dont la réunion des images dans $S_{\Gamma,G,M}=\Gamma\backslash G/M$ recouvrent $S_{\Gamma,G,M}$.
\item (ii) Soient $\mathbf{Q}_1$ et $\mathbf{Q}_2$ deux $\QQ$-paraboliques  de $\G$ et $\Sigma(\mathbf{Q}_i,V_i,t_i,K_{\infty})$ des ensembles de Siegel associés. Alors l'ensemble
$$
\{\gamma\in \Gamma \ \vert \gamma \Sigma(\mathbf{Q}_1,V_1,t_1,K_{\infty})\cap \Sigma(\mathbf{Q}_2,V_2,t_2,K_{\infty}) \neq \emptyset\}
$$
est fini. Si de plus $\mathbf{Q}_2$
 n'est  pas $\Gamma$-conjugué à $\mathbf{Q}_1$ et $V_1$ et $V_2$ sont fixés, 
 $$
 \gamma \Sigma(\mathbf{Q}_1,V_1,t_1, K_{\infty})\cap \Sigma(\mathbf{Q}_2,V_2,t_2,K_{\infty}) =\emptyset
 $$
  pour $t_1$ et $t_2$ suffisamment grand.
 \item (iii) Soit $\PP$ un $\QQ$-parabolique  de $\G$ et $\Sigma(\mathbf{P},V,t)$ un ensemble de Siegel  de $\G$ associé à $\PP$. Alors pour $t$ suffisamment grand 
 $$
 \gamma \Sigma(\PP,V,t,K_{\infty})\cap \Sigma(\PP,V,t)=\emptyset
 $$
 pour tout $\gamma\notin \Gamma- \Gamma_P$, où l'on a noté $\Gamma_P=\Gamma\cap \PP$.
 \end{itemize}
\end{proposition}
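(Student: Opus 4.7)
Cet énoncé est essentiellement la Proposition III.2.19 de~\cite{BorelJi}, dont la démonstration complète est longue et technique. Je décris ici les grandes étapes et les outils que j'emploierais si je devais en reconstruire une preuve complète, en insistant sur les points délicats. La restriction semi-algébrique imposée à l'ensemble~$V$ décrivant la partie compacte de~$H_P$ ne crée aucune difficulté nouvelle: elle est automatiquement préservée par les constructions standards (on peut toujours grossir un compact fondamental en un ouvert borné semi-algébrique, par exemple par image d'un voisinage semi-algébrique d'un compact fondamental dans~$NM$).

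Pour~(i), l'idée est de procéder par récurrence sur le~$\Q$-rang. Pour le~$\Q$-rang nul,~$S$ est compact et l'énoncé est trivial. En général, on démontre l'existence d'une partie compacte~$\omega$ telle que~$\omega \cdot \Sigma$ recouvre~$G$, pour un ensemble de Siegel~$\Sigma$ attaché au parabolique minimal~$\PP_0$, résultat classique dit de \og{}Borel-Harish-Chandra\fg{} (cf.~\cite[Th.~13.1]{BorelIntro} et~\cite[Th.~4.8]{BHC}). On décompose ensuite~$\Sigma$ le long des faces de la chambre de Weyl en réunion finie indexée par les classes de~$\Gamma$-conjugaison de paraboliques standards contenant~$\PP_0$, chaque morceau étant contenu dans un ensemble de Siegel associé au parabolique correspondant. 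Ici la semi-algébricité se propage car les projections de~$\omega$ sur les composantes~$H_{P_i}$ restent bornées et peuvent être englobées dans des voisinages semi-algébriques.

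Pour~(ii), la finitude repose sur l'argument de réduction dimensionnelle via une représentation algébrique: on utilise le Théorème de Chevalley comme dans la preuve du Corollaire~\ref{dc} ci-dessus pour plonger~$G/P$ dans une représentation~$W$; l'orbite d'un vecteur hautement~$P$-dominant sous~$\Gamma$ est discrète (car contenue dans un réseau linéaire), tandis que les images des ensembles de Siegel restent bornées dans~$W$ (contrôle de la partie~$A^+_{\geq t}$ via les caractères~$\alpha\in\Phi(A,P)$). L'intersection des deux correspond à la condition de~$\gamma$-intersection, d'où sa finitude. Pour la partie plus fine concernant le cas~$\mathbf{Q}_1$ et~$\mathbf{Q}_2$ non-$\Gamma$-conjugués, l'obstacle principal réside dans un contrôle quantitatif: quand~$t_1$ et~$t_2$ tendent vers l'infini, toute direction asymptotique dans un ensemble de Siegel pour~$\mathbf{Q}_i$ correspond à un \og{}point à l'infini\fg{} dans l'immeuble sphérique de Tits associé à~$\mathbf{Q}_i$. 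La condition de non-$\Gamma$-conjugaison de~$\mathbf{Q}_1$ et~$\mathbf{Q}_2$ empêche alors toute identification asymptotique modulo~$\Gamma$.

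Pour~(iii), qui constitue l'étape la plus délicate, l'argument classique consiste à observer que si~$\gamma\Sigma(\PP,V,t,K_\infty)\cap \Sigma(\PP,V,t,K_\infty)$ est non vide pour~$t$ arbitrairement grand, alors~$\gamma$ doit préserver la face de la chambre de Weyl à l'infini associée à~$\PP$, donc normaliser~$\PP$, ce qui force~$\gamma\in \PP(\Q)\cap\Gamma=\Gamma_P$. La mise en forme quantitative utilise les Théorèmes de Langlands sur la décomposition des fonctions sphériques, ou plus simplement un calcul direct avec la décomposition~$g=nmak$: l'action de~$\gamma$ sur la composante~$a\in A^+_{\geq t}$ doit rester dans~$A^+_{\geq t}$, ce qui impose~$\gamma\in\PP$. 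L'obstacle principal dans cette étape est de rendre effectif ce~$t$ en fonction de~$V$ et~$\Gamma$; la preuve utilise typiquement un argument de compacité combiné à la~(ii) appliquée au cas~$\mathbf{Q}_1=\mathbf{Q}_2=\PP$.

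Dans l'économie de ce texte, nous nous contentons d'invoquer~\cite[III.2.19]{BorelJi} sans reproduire ces arguments; la condition semi-algébrique supplémentaire sur~$V$ est compatible avec toutes les étapes ci-dessus.
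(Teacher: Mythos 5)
Votre proposition est correcte et suit essentiellement la même démarche que le texte: la Proposition~\ref{Siegel} y est présentée sans démonstration, comme un simple résumé de la théorie de la réduction renvoyant à \cite[Prop.~III.2.19]{BorelJi}, ce qui est exactement votre conclusion finale. Votre esquisse des étapes de l'argument de Borel--Ji (récurrence sur le $\Q$-rang et Borel--Harish-Chandra pour~(i), propriété de Siegel via une représentation de Chevalley pour~(ii), rigidité de la face à l'infini pour~(iii)) est fidèle à la preuve de référence et la remarque sur la compatibilité de la condition semi-algébrique sur~$V$ est pertinente.
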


\subsubsection{Propriétés de définassibilité des applications d'uniformisation des quotients arithmétiques.}\label{sDefiUnif}

Nous aurons besoin dans la suite d'énoncés assurant que la restriction de l'application d'uniformisation d'un quotient arithmétique à un domaine de Siegel est définissable dans une théorie o-minimal convenable. Le cas de l'espace des modules ${\mathcal A}_g$ des variétés abéliennes principalement polarisées de dimension $g$ est traité par Peterzil et Starchenko \cite{PT}. Le cas général d'une variété de Shimura, ou d'espace localement symétrique hermitien arithmétique est traité dans \cite{KUY}. Le cas des quotients arithmétiques arbitraires est traité par Bakker, Klingler et Tsimerman \cite{BKT}. On résumé cet ensemble de résultats dans l'énoncé suivant.

Soit $\G$ un groupe semi-simple connexe sur $\Q$, $\Gamma \subset \G(\Q)^+$ un réseau sans torsion et $M\subset G$ un sous-groupe compact. On rappelle que le quotient arithmétique 
$S_{\Gamma,G,M}=\Gamma\backslash G/M$ est muni d'une  structure de variété $\RR^{alg}$-définissable.   Si $G$ est de type hermitien, $M=K_{\infty}$, alors $X=G/K_{\infty}$ est un espace hermitien symétrique et par Baily-Borel $S_{\Gamma,G,M}$ a une unique structure de variété algébrique quasi-projective. Les deux structures de variétés $\RR^{alg}$-définissables ainsi obtenues sont différentes mais d'après \cite{BKT} les extension de ses deux structures dans $\RR^{an, exp}$ coïncident. On note comme précédemment $\alpha_M:G\longrightarrow G/M$ l'application quotient. Quand $M=K_{\infty}$ correspond à un point $x_0$ de $X=G/K_{\infty}$ on utilise aussi la notation $\alpha_{K_{\infty}}= \alpha_{x_0}$.

\begin{proposition}\label{defpi}
(i) Soit $\pi: G/M \longrightarrow S_{\Gamma,G,M}=\Gamma\backslash G/M$, soit 
$$
\Sigma_M=\Sigma_M(\PP,V,t,K_{\infty})=\alpha_M (\Sigma(\PP,V,t,K_{\infty}))
$$
 un ensemble de Siegel de $G/M$ associé à un sous-groupe parabolique $\PP$ et à $K_{\infty}$ contenant $M$. 
Alors l'application 
$$
\pi_{\vert \Sigma_M}: \Sigma_M\longrightarrow S_{\Gamma,G,M}
$$
est définissable dans $\RR^{alg}$. Elle est donc définissable dans toute théorie o-minimale étendant la structure de variété $\RR^{alg}$-définissable sur $ \Sigma_M\longrightarrow S_{\Gamma,G,M}$. En particulier 
$$
{\pi\circ \alpha_M}_{\vert \Sigma(\PP,V,t,K_{\infty})}: \Sigma(\PP,V,t,K_{\infty})\longrightarrow  S_{\Gamma,G,M}
$$
est définissable dans $\RR^{an, exp}$

(ii) Si $S_{\Gamma,G,K_{\infty}}$ est localement symétrique hermitien
Alors l'application 
$$
\pi_{\vert \Sigma_{K_{\infty}}}: \Sigma_{K_{\infty}}\longrightarrow S_{\Gamma,G,K_{\infty}}
$$
est définissable dans $\RR^{an, exp}$. De même 
$$
{\pi\circ\alpha_{x_o}}_{\vert \Sigma(\PP,V,t,K_{\infty})}: \Sigma (\PP,V,t,K_{\infty}) \longrightarrow S_{\Gamma,G,K_{\infty}}
$$
est définissable dans $\RR^{an, exp}$. 

\end{proposition}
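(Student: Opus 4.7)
La stratégie consiste à combiner la théorie de la réduction (Proposition~\ref{Siegel}) avec les résultats de définissabilité déjà présents dans la littérature (\cite{PT, KUY, BKT}), ces derniers ayant précisément été formulés pour équiper les quotients arithmétiques d'une structure o-minimale compatible avec l'uniformisation sur les domaines de Siegel. Il ne s'agira donc pas tant de démontrer une nouveauté que d'organiser une synthèse, en veillant à la cohérence des différentes structures o-minimales en présence.

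Pour (i), je commencerais par observer que la structure $\R^{alg}$-définissable sur $S_{\Gamma,G,M}$ fournie par~\cite[Th.~1.1]{BKT} est construite précisément de sorte que la restriction de $\pi$ à un ensemble de Siegel soit semi-algébrique. La clef est la Proposition~\ref{Siegel}~(ii), (iii): sur $\Sigma(\PP, V, t, K_\infty)$ avec $V$ semi-algébrique borné, la relation d'équivalence induite par $\pi$ n'est contrôlée que par une partie finie $F \subseteq \Gamma$. Autrement dit, deux points $g_1, g_2$ de $\Sigma_M$ vérifient $\pi(g_1) = \pi(g_2)$ si et seulement s'il existe $\gamma \in F$ avec $g_2 = \gamma g_1 \pmod{M}$, ce qui est une condition semi-algébrique. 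Comme $\Sigma(\PP, V, t, K_\infty)$ est lui-même semi-algébrique dans $G$ et que $\alpha_M$ est semi-algébrique, l'image $\Sigma_M$ est semi-algébrique dans $G/M$; l'atlas construit par BKT fait alors de $\pi|_{\Sigma_M}$ une application semi-algébrique. La définissabilité dans toute structure o-minimale étendant $\R^{alg}$ (en particulier $\R^{an, exp}$) suit par extension formelle.

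Pour (ii), l'enjeu est de travailler avec la structure algébrique de Baily-Borel sur $S_{\Gamma,G,K_\infty}$, qui diffère en général de la structure $\R^{alg}$ issue des ensembles de Siegel. Je m'appuierais sur les résultats de définissabilité de l'uniformisation relativement à cette structure de Baily-Borel: le cas de $\mathcal{A}_g$ traité par Peterzil et Starchenko~\cite{PT}, puis le cas hermitien général traité par Klingler-Ullmo-Yafaev~\cite{KUY}. Le point capital pour unifier les deux approches est l'énoncé de~\cite{BKT} selon lequel les deux structures $\R^{alg}$ (celle de BKT via les ensembles de Siegel et celle issue de Baily-Borel), une fois étendues à $\R^{an, exp}$, coïncident. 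On en tire à la fois la définissabilité dans $\R^{an, exp}$ de $\pi|_{\Sigma_{K_\infty}}$ pour la structure de Baily-Borel, et la définissabilité de ${\pi\circ\alpha_{x_0}}|_{\Sigma(\PP,V,t,K_\infty)}$ qui en découle via la semi-algébricité de $\alpha_{x_0}$.

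L'obstacle principal est précisément cette question de compatibilité: à structure $\R^{alg}$ fixée sur $S$, il y a plusieurs façons plausibles de définir une extension à $\R^{an, exp}$, et il faut vérifier que les extensions construites via les deux voies concordent. Cette compatibilité est le contenu du Théorème~1.1 de~\cite{BKT}, et une fois ce résultat admis, la preuve proposée devient essentiellement une juxtaposition de la théorie de la réduction et des énoncés cités. Il restera néanmoins à mentionner explicitement la dépendance (ou l'indépendance) du résultat vis-à-vis du choix du compact maximal $K_\infty$, choix qui affecte la structure $\R^{alg}$ mais non, dans le cas hermitien, la structure de Baily-Borel sous-jacente.
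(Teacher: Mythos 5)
Your proposal is correct and takes essentially the same approach as the paper, which in fact offers no proof of this proposition at all: the statement is explicitly presented there as a summary (``on résume cet ensemble de résultats'') of the definability theorems of Peterzil--Starchenko, Klingler--Ullmo--Yafaev and Bakker--Klingler--Tsimerman. Your sketch --- controlling the fibres of $\pi$ on a Siegel set via the finiteness properties of Proposition~\ref{Siegel} and invoking the coincidence, after extension to $\R^{an,exp}$, of the Baily--Borel and Siegel-set $\R^{alg}$-structures from \cite{BKT} --- correctly unpacks what the paper merely cites.
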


\begin{remarque}
Quand $M=K_{\infty}$, $X=G/K_{\infty}$ est un espace symétrique (hermitien ou non), la structure de variété $\R^{alg}$-définissable sur $S=\Gamma\backslash X$ ne dépend pas du choix de $K_{\infty}$. Soit $\PP$ un $\Q$-parabolique. Soient $K_{\infty}$ et $K'_{\infty}$ deux compacts maximaux de $G$.
On note $x_0$ et $x_0'$ les points  de $X$ correspondant à $K_{\infty}$ et à $K'_{\infty}$. Pour $x\in X$, on note 
$$
\alpha_{x}: G\longrightarrow X \mbox{ l'application } g\mapsto g.x.
$$
 Comme $G=PK_{\infty}$, il existe $p\in P$ tel que $x'_0=p.x_0$ et $\alpha_{p.x_0}=\alpha_{x_0}\psi_p$ où $\psi_p$ désigne l'application de translation à droite par $p$ sur $X$. Dans cette situation
 $$
 \psi_p: \Sigma(\PP,V p^{-1},t, K'_{\infty} ))\longrightarrow \Sigma(\PP,V,t, K_{\infty})
 $$ 
 est un isomorphisme semi-algébrique qui induit l'isomorphisme de $\R^{alg}$-structure définissable sur 
 $$
 \alpha_{x_0}(\Sigma(\PP,V,t, K_{\infty}))= \alpha_{p.x_0} (\Sigma(\PP,V p^{-1},t, K'_{\infty} ))\subset S=\Gamma\backslash X.
 $$
 On construit l'isomorphisme de $\R^{alg}$-structures définissables sur $S$ en recouvrant $S$ par des Siegel convenables relativement  à un système $(\PP_1,\PP_2,\dots, \PP_r)$ de représentants des $\Q$-paraboliques de $\G$  modulo la conjugaison par $\Gamma$ comme dans la Proposition \ref{Siegel}. 
 
 Quand $M$ n'est pas maximal, les $\R^{alg}$-structures définissables sur $S_{\Gamma, G,M}$ associées à deux compacts maximaux distincts peuvent différer de manière essentielle. On peut voir que déjà pour $\G=\SL_2$ il n'est pas en général possible de construire des isomorphismes de $\R^{an,exp}$-structures entre les extensions   $\R^{an,exp}$ des $\R^{alg}$-structures définissables sur $\SL(2,\Z)\backslash \SL(2,\R)$ associées à des compacts maximaux de $G$ distincts.  Les résultats que nous avons en vue sont valables pour tout les choix de $\R^{alg}$-structures définissables sur $S_{\Gamma, G,M}$ et les choix en questions ne jouerons aucuns rôles dans la suite de ce texte.
\end{remarque}

\subsubsection{Suite de mesures de type DGU}\label{sDGU}

\begin{definition}\label{defDGU}
Nous dirons qu'une suite de mesures de type Ratner~\((\mu_i)_{i\in \N}\) est \emph{de la forme DGU (dans~$G$, muni de~$K_{\infty}$, relativement à~$\PP$)} s'il existe des écritures
dans la décomposition de Langlands~(\ref{eqLang})
\begin{equation}\label{def ecritures DGU}
\mu_i=\mu_{\HH_i}\cdot g_i\qquad{g_i=n_i\cdot m_i \cdot a_i\cdot k_i} \qquad{h_i=n_im_i\in H_P}
\end{equation}
où
\begin{itemize}
\item[{[DGU~1]}] la suite~\((\widehat{\mu_i})_{i\in \N}:=(\mu_{\HH_i}\cdot h_i)_{i\geq 0}\) est de la forme Mozès-Shah dans $\Gamma_P\backslash H_P\subset \Gamma \backslash G$, soit
\begin{itemize}
\item[(M.-S.~1)] la suite~$(h_i)_{i\in \N}$ est convergente, de limite notée~$h_\infty$;
\item[(M.-S.~2)] parmi les sous-groupes algébriques de~$\G$  contenant une infinité des~$\HH_i$, il en existe un unique minimal, que nous noterons~$\HH_\infty=\limsup_{i\in \N} \HH_i$.
\item On a de plus $\HH_i\subset \HH_P$ pour tout $i\ge 0$.
\end{itemize}
\item[{[DGU~2]}] la suite~$(a_i)_{i\in \N}$ vérifie
\[
\forall \alpha\in\Phi(P,A_P), \lim_{i\to \infty}\alpha(a_i)=\infty
\]
\end{itemize}
Nous dirons alors que~$(\HH_i,h_i,k_i)_{i\in \N}$ est \emph{une écriture de type DGU} (dans~$G$, muni de~$K_{\infty}$, relativement à~$P$) de la suite~$(\mu_i)_{i\in \N}$.
\end{definition}
Un des points centraux du travail~\cite{DGU2} est la généralisation suivante du Théorème~\ref{MS extraite} qui résume la preuve du Théorème 1.1 de \cite{DGU2} qui s'obtient en vérifiant le critère du Théorème 4.2 de \cite{DGU2}.

\begin{theorem}
Soit~$(\mu_i)_{i\in \N}$ une suite de type Ratner dans~$\Gamma\sous G$.

Il existe un sous-groupe parabolique~$\PP$ tel que pour tout sous-groupe compact maximal $K_{\infty}\subset \G(\R)$,
il existe une sous-suite extraite infinie de la forme DGU (dans~$G$, muni de~$K_{\infty}$, relativement à~$\PP$).
\end{theorem}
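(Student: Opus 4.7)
Mon plan consiste à utiliser la dichotomie de Dani-Margulis pour isoler l'éventuelle perte de masse, puis à combiner la théorie de Mozes-Shah appliquée à l'intérieur du sous-groupe $H_P$ avec une extraction supplémentaire contrôlant les directions toroïdales dans $A_P$. On observe d'emblée que le choix du parabolique $\PP$ doit pouvoir être fait indépendamment de $K_\infty$, lequel n'interviendra que par l'intermédiaire de la décomposition de Langlands associée. On sépare donc le raisonnement en deux cas.

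Si la suite $(\mu_i)_{i\in\N}$ admet une sous-suite tendue, le Théorème~\ref{MS extraite}(i) en fournit une sous-suite de forme Mozes-Shah. On choisit alors $\PP=\G$, pour lequel $N=\{e\}$, $A_P=\{e\}$ et $H_P=G$: la condition [DGU~1] coïncide avec la définition de Mozes-Shah, et [DGU~2] est trivialement satisfaite puisque $\Phi(\PP,A_P)=\emptyset$.

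Sinon, le Théorème de Dani-Margulis, couplé à la finitude des $\Gamma$-classes de conjugaison des $\QQ$-paraboliques (Proposition~\ref{Siegel}), garantit l'existence d'un $\QQ$-parabolique propre $\PP$, d'un borné semi-algébrique $V\subset H_P$ et d'un $t\geq 1$ tels qu'après translation éventuelle par $\Gamma$ (ce qui remplace les $\HH_i$ par des conjugués), on puisse extraire une sous-suite avec $g_i\in \Sigma(\PP,V,t,K_\infty)$ et $\HH_i\subseteq \HH_P$. La décomposition de Langlands relative à $K_\infty$ donne $g_i=h_ia_ik_i$ avec $h_i\in V$, $a_i\in A^+_{\geq t}$, $k_i\in K_\infty$; la compacité de $\overline{V}$ et $K_\infty$ fournit, par extraction, la convergence de $(h_i)$ et $(k_i)$.

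On installe alors [DGU~1] en appliquant le Théorème~\ref{MS extraite}(ii) à la suite auxiliaire $\widehat{\mu_i}:=\mu_{\HH_i}\cdot h_i$, qui est de type Ratner dans $\Gamma_P\sous H_P$ et tendue par bornitude de $(h_i)$, donc de forme Mozes-Shah dans $H_P$ après extraction. Pour [DGU~2], on extrait une dernière fois pour que chaque $\alpha(a_i)$ converge dans $[1,+\infty]$; si certaines limites sont finies, on absorbe les directions toroïdales bornées dans le Levi en remplaçant $\PP$ par un parabolique $\PP'\supseteq\PP$ adéquat, ce qui réduit $\Phi(\PP',A_{P'})$ aux racines effectivement explosives. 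La difficulté principale est précisément cet ajustement terminal: il faut vérifier que le passage à $\PP'$ préserve simultanément l'inclusion $\HH_i\subseteq \HH_{P'}$ et la structure Mozes-Shah dans le nouveau quotient $\Gamma_{P'}\sous H_{P'}$, ce qui est la substance du critère établi au Théorème~4.2 de \cite{DGU2}, dont le présent énoncé est une conséquence directe.
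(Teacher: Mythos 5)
Le texte ne démontre pas ce théorème~: il est présenté explicitement comme un résumé du Théorème~1.1 de \cite{DGU2}, obtenu en vérifiant le critère du Théorème~4.2 de cette référence, et aucune preuve indépendante n'est donnée. Votre proposition doit donc être comparée à l'argument de \cite{DGU2} plutôt qu'à celui du papier. Votre traitement du cas tendu est correct et complet~: avec $\PP=\G$ on a bien $N=\{e\}$, $A=\{e\}$ (car $\G$ est semi-simple), $H_P=G$ et $\Phi(A,P)=\emptyset$, de sorte que la forme DGU se réduit exactement à la forme Mozes-Shah fournie par le Théorème~\ref{MS extraite}~(i), et ce choix de $\PP$ est uniforme en $K_{\infty}$.

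En revanche, le cas non tendu comporte deux lacunes réelles. D'une part, vous affirmez que la dichotomie de Dani-Margulis et la Proposition~\ref{Siegel} fournissent \emph{simultanément} un $\Q$-parabolique propre $\PP$ avec $\HH_i\subseteq\HH_P$ \emph{et} un positionnement $g_i\in\Sigma(\PP,V,t,K_{\infty})$ avec $h_i$ dans un borné fixe $V\subset H_P$. Or la théorie de la réduction place $g_i$ dans un ensemble de Siegel relatif à \emph{un} parabolique standard qui n'a aucune raison a priori d'être celui fourni par Dani-Margulis comme contenant les $\HH_i$~; faire coïncider les deux (en choisissant le représentant $g_i$ dans sa classe $H_ig_i$ et le $\Gamma$-conjugué de $\HH_i$ de façon compatible, et en garantissant que la suite interne $\widehat{\mu_i}$ ne perd pas elle-même de la masse dans $\Gamma_P\backslash H_P$) est précisément le contenu non trivial de \cite{DGU2}. (Notons au passage que l'implication $\HH_i\subseteq\PP\Rightarrow\HH_i\subseteq\HH_P$, que vous utilisez tacitement, est vraie mais mérite d'être justifiée~: un groupe de type $\cHH$ n'a pas de caractère rationnel, donc est contenu dans le noyau commun des caractères rationnels de $\PP$.) D'autre part, l'\og{}ajustement terminal\fg{} consistant à agrandir $\PP$ en $\PP'$ pour ne conserver que les racines dont les valeurs $\alpha(a_i)$ tendent vers l'infini, tout en préservant $\HH_i\subseteq\HH_{P'}$ et la forme Mozes-Shah de la nouvelle suite dans $\Gamma_{P'}\backslash H_{P'}$, est le c\oe{}ur du critère du Théorème~4.2 de \cite{DGU2}, que vous invoquez sans le vérifier. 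Votre esquisse est donc une reconstruction plausible de la stratégie de \cite{DGU2}, mais, comme le papier lui-même, elle repose in fine sur cette référence pour les étapes décisives~; en tant que preuve autonome, elle est incomplète.
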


Nous aurons besoin de la conséquence simple suivante de ces énoncés.
\begin{lemme}\label{bonK}
Soit $\epsilon>0$. Soit $\theta: G\longrightarrow \Gamma \backslash G$ et soit $(\mu_i)_{i\in \NN}$ une suite de type Ratner dans $\Gamma\backslash G$ avec une écriture de type DGU comme dans la définition \ref{defDGU}, dans $G$  muni de $K_{\infty}$ relativement
à $\PP$. Il existe un ensemble de Siegel $\Sigma(\PP, V_P,t,K_{\infty})=V_PA^+_{\ge t}K_{\infty}$ dans $G$ relativement à $\PP$ tel que pour tout $i$ assez grand
$$
\widehat{\mu}_i(\theta (V_P))= {\mu}_i(\theta (V_Pa_i))\ge 1-\epsilon.
$$
\end{lemme}
Comme la suite  $ (\widehat{\mu_i})_{i\geq 0}:=(\mu_{\HH_i}\cdot h_i)_{i\geq 0}$ est de la forme Mozès-Shah dans $\Gamma_P\backslash H_P$, on sait que $\widehat{\mu_i}$ converge faiblement vers 
$\widehat{\mu}_{\infty}=\widehat{\mu}_{H_{\infty}} \cdot h_{\infty}$ dans $\Gamma_P\backslash H_P$. On peut alors trouver un compact $C$ de $\Gamma_P\backslash H_P$ tel que 
$\widehat{\mu}_{\infty}(C)\ge 1-\frac{\epsilon}{2}$. Pour tout $i$ assez grand on a $\vert \widehat{\mu}_i (C)-\widehat{\mu}_{\infty}(C)\vert \leq  \frac{\epsilon}{2}$. On peut alors choisir un compact semi-algébrique $V_P$ de $H_P$ tel que $C\subset \theta(V_P)$ qui aura les propriétés voulues.  On remarquera aussi que pour $i$ assez grand $a_i\in A^+_{\ge t}$ d'après la condition [DGU2].

\section{Mesures homogènes et o-minimalité dans les espaces localement homogènes}  \label{sec8} Voici l'énoncé charnière de notre méthode: il condense ce qui nous servira dans la preuve du Théorème principal dans le cas tendu. Il combine nos énoncés à propos de la dimension de familles définissables, avec les propriétés sur les mesures de type Ratner.

Une variante à un paramètre de cet énoncé est donnée à la section \ref{v1p}. Elle nous sera utile pour la démonstration du Théorème principal quand on ne se place plus à priori dans le cas tendu.

\begin{theorem}[]\label{Charnière}  

Soit $(\mu_i)_{i\in \N}$ une suite de mesures de type Ratner, de la forme Mozes-Shah, dans une écriture de type Mozes-Shah, 
\begin{equation*}
\mu_i=\mu_{\HH_i}\cdot g_i\longrightarrow\mu_\infty=\mu_{\HH_\infty}\cdot g_\infty. 
\end{equation*}

Soit aussi~$\wV\subseteq \Gamma\sous G$ une partie mesurable et~$U\subseteq G$ une partie définissable bornée. Nous faisons  deux hypothèses.
\begin{itemize}
\item \emph{Hypothèse en bas:} 
\begin{equation}\label{Inégalité en bas}
\liminf_{i\in\N}\mu_i(\wV\cap\theta(U))>0
\end{equation}
\item \emph{Hypothèse en haut:} La partie~$\theta^{-1}(\wV)\cap U$ est définissable.
\end{itemize}

Alors, en haut, sauf pour au plus un nombre fini d'indices~$i\in\N$, l'ensemble
\(\theta^{-1}(\wV)\cap U\) contient un ouvert non vide de $\Gamma \cdot H_\infty\cdot g_i$.

Pour ces mêmes indices, en bas, l'ensemble
\(\widehat{V}\cap \theta(U)\) contient un ouvert non vide de \linebreak $\Gamma\sous\Gamma \cdot H_\infty\cdot g_i$.
\end{theorem}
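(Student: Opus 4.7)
Le plan consiste à appliquer le Corollaire~\ref{Corollaire mesures} après avoir remonté la situation à $G\subseteq\R^n$ via le plongement fourni par la représentation fidèle. Le paramètre dimensionnel sera $d=\dim(H_\infty)$; la famille définissable enveloppant les supports sera formée des translatés $\Gamma\cdot H_\infty\cdot g_i$ intersectés avec un compact, définissable via le Corollaire~\ref{dc}; la famille asymptotiquement chargée sera la partie constante $\theta^{-1}(\wV)\cap U$, définissable par hypothèse.

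Premier temps, la mise en place \og{}en haut\fg{}: on choisit un voisinage ouvert borné semi-algébrique $W$ de $\overline{U}$ avec $\mu_\infty^\sharp(\partial W)=0$, ce qui est possible en perturbant les paramètres définissant~$W$, la mesure $\mu_\infty^\sharp$ ayant son support inclus dans une union finie de sous-variétés lisses de $K$. On pose $K:=\overline{W}$ et $\nu_i:=\mu_i^\sharp\restriction_W$. La convergence étroite $\mu_i\to\mu_\infty$ donnée par le Théorème~\ref{MS1} entraine, via la formule $\int_G f~d\mu_i^\sharp=\int_{\Gamma\sous G}\tilde{f}~d\mu_i$ (avec $\tilde{f}(x)=\sum_\gamma f(\gamma^{-1}x)$ pour $f\in C_c(G)$), la convergence vague $\mu_i^\sharp\to\mu_\infty^\sharp$ en haut. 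Un argument à la Portmanteau livre alors la convergence étroite $\nu_i\to\nu_\infty:=\mu_\infty^\sharp\restriction_W$ dans le compact~$K$, avec masses uniformément bornées.

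Second temps, les familles définissables et l'analyse dimensionnelle. Par la condition (M.-S.~2), $\HH_i\subseteq\HH_\infty$ pour $i$ assez grand, donc $\operatorname{Supp}(\nu_i)\subseteq\Gamma\cdot H_i\cdot g_i\cap K\subseteq A(b_i):=\Gamma\cdot H_\infty\cdot g_i\cap K$. La famille $(A(b_i))_i$ est semi-algébrique par le Corollaire~\ref{dc} (dans sa version symétrique à droite, en paramétrant par $g$ dans un voisinage semi-algébrique borné $V$ de $g_\infty$ contenant les $g_i$ à partir d'un certain rang). La mesure limite $\nu_\infty$ est concentrée sur $\Gamma\cdot H_\infty\cdot g_\infty\cap K$, union finie de $\Gamma$-translatés par discrétude de~$\Gamma$ et bornitude de~$K$; chaque translaté portant localement une mesure de Haar sur $H_\infty\cdot g_\infty$, la Proposition~\ref{Haardim} rend négligeables pour $\nu_\infty$ les parties définissables de dimension strictement inférieure à $d:=\dim(H_\infty)$. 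La majoration $\dim A(b_i)\leq d$ est immédiate, $A(b_i)$ étant contenu dans une variété lisse de dimension~$d$. Enfin, la famille constante $A'(b'):=\theta^{-1}(\wV)\cap U$ est définissable par hypothèse, et la formule de relèvement donne $\nu_i(\theta^{-1}(\wV)\cap U)\geq \mu_i(\wV\cap\theta(U))$ (car $\#(\theta^{-1}(x)\cap U)\geq 1$ dès que $x\in\wV\cap\theta(U)$), d'où $\liminf_i\nu_i(A')>0$ par l'hypothèse en bas.

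Le Corollaire~\ref{Corollaire mesures} s'applique alors et fournit, pour tout $i$ assez grand, un ouvert non vide de $A(b_i)=\Gamma\cdot H_\infty\cdot g_i\cap K$ contenu dans $\theta^{-1}(\wV)\cap U$, ce qui est l'énoncé en haut. La projection par~$\theta$, injective sur tout ouvert suffisamment petit de $G$, donne l'énoncé en bas. Le principal obstacle technique sera d'assurer rigoureusement la convergence étroite $\nu_i\to\nu_\infty$ dans~$K$ via le choix soigneux du bord de~$W$, ainsi que la définissabilité de la famille $(A(b_i))$ via l'adaptation du Corollaire~\ref{dc} (passage de la translation à gauche du facteur~$U$ à la translation à droite adéquate).
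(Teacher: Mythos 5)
Your proposal is correct and follows essentially the same route as the paper: the same instantiation of the Corollaire~\ref{Corollaire mesures} with $d=\dim(H_\infty)$, the lifted restricted measures $\mu_i^\sharp\restriction_K$, the definable family of double cosets obtained from the Corollaire~\ref{dc}, and the counting inequality $\nu_i(\theta^{-1}(\wV)\cap U)\geq\mu_i(\wV\cap\theta(U))$ to verify the hypothesis~\eqref{H2}. The only (harmless) variation is that you identify the limit $\nu_\infty={\mu_\infty}^\sharp\restriction_W$ exactly via a Portmanteau argument requiring ${\mu_\infty}^\sharp(\partial W)=0$, whereas the paper simply extracts a convergent subsequence by Prokhorov and uses only the domination $\nu_\infty\leq{\mu_\infty}^\sharp\restriction_K$, which suffices for the negligibility hypothesis~\eqref{H1}.
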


Le point frappant à réaliser dans cet énoncé est que l'on suppose a priori que~$\wV$ contient une partie de l'ensemble~$\Gamma\sous\Gamma \cdot H_i\cdot g_i=\operatorname{Supp}{}(\mu_i)$, mesurée par~$\mu_i$ mais que  l'on déduit a posteriori que~$\wV$ contient une partie ouverte de l'ensemble plus grand~$\Gamma\sous\Gamma \cdot H_\infty\cdot g_i$. Il est pourtant des cas où~$H_\infty$ soit strictement plus grand que les~$H_i$. 

Le passage de la première conclusion à la seconde n'est pas surprenant et s'obtient de la manière suivante.
\begin{proof} 

Soit $Z=\Gamma\sous\Gamma \cdot H_\infty\cdot g_i$, pour un indice~$i\in\N$ assez grand.
D'après la conclusion de l'énoncé principal pour un tel $i$, il existe un ouvert non vide~$W$ de~$\theta^{-1}(Z)=\Gamma \cdot H_\infty\cdot g_i$ contenu dans~$U\cap\theta^{-1}(\wV)$. Comme $\theta$ est un homéomorphisme local,  c'est une application ouverte et la restriction de $\theta$ à$\theta^{-1}(Z)\rightarrow Z$ est ouverte.
On en déduit que  $\theta(W)$ est un ouvert non vide de $Z$ qui est contenu dans $\theta(U)\cap \wV$.

\end{proof} 

\begin{proof}{~} Nous démontrons maintenant la première phrase de conclusion.
\subsubsection{Instanciation} 
Nous spécifions une instance du Théorème~\ref{Proposition mesures}. 

Comme dans la section \ref{def4}, nous posons~$n=N^2$  et nous réalisons $G$ comme sous-ensemble de  $\R^{N^2}=\R^n$. On définit 
\[K=\overline{U}\subseteq G\subseteq\R^n.\]
Alors~$K$ est une partie compacte car~$U$ est borné et $K$ est définissable comme clôture d'un ensemble définissable.


Soit aussi une partie définissable bornée~$W$ de~$G$ qui contient~$\{g_i|i\in\N\}$.

Comme~$K$ et~$W$ sont définissables et bornés, nous pouvons  appliquer le Corrolaire~\ref{dc}, selon lequel la famille
\begin{equation}\label{invoque définissabilité}
(A(g))_{g\in W}=\left(K\cap (\Gamma H_\infty g)\right)_{g\in W}
\end{equation}
est une famille définissable.

Soit $B'=\{b'\}$ un ensemble ayant un unique élément. Nous utilisons la famille constante
\begin{equation}\label{invoque définissabilité 2}
A'(b')_{b'\in B'}=\left(\theta^{-1}(\wV)\cap U\right)_{b'\in B'}
\end{equation}
qui est une famille  définissable par l'hypothèse en haut. Nous choisissons alors la suite constante
\[
(b'_i)_{i\geq0}=(b')_{i\geq0}.
\]

Soit ${\mu_i}^\sharp$ la mesure localement finie sur $G$ associé 
à la mesure $\mu_i$ en bas, par le procédé détaillé dans la section \ref{becar}. On définit alors $\nu_i:={{\mu_i}^\sharp}\restriction_K$, qui est une mesure finie sur $K$.

\begin{lemme}
Quitte à extraire une sous-suite infinie, on peut supposer que la  suite $(\nu_i)_{i\in \N}$  converge faiblement vers une mesure $\nu_{\infty}$. De plus 
\begin{equation}\label{domination2}
\nu_\infty\leq {\mu_\infty}^\sharp\restriction_K.
\end{equation}

\end{lemme}
\begin{proof}

\begin{equation}\label{une domination}
\limsup_{i\in\N} \nu_i(K)=\limsup_{i\in\N} \mu^\sharp_i\restriction_K(K)\leq {\mu_\infty}^\sharp(K).
\end{equation}
Le membre de droite est fini car~$K$ est borné et~${\mu_\infty}^\sharp$ localement finie. Cela établi l'hypothèse~\eqref{H Prokhorov} du Théorème~\ref{Prokhorov} de Prokhorov qui assure la conclusion du Lemme.
\end{proof}

Nous avons en main tous les objets qui jouent un rôle dans l'énoncé du Théorème~\ref{Proposition mesures}. On pose  

\begin{itemize}
\item $(A(b))_{b\in B}=(A(g))_{g\in W},$
\item $ (b_i)_{i\in\N}=(g_i)_{i\in\N},$
\item $ (\nu_i)_{i\in\N}=({{\mu_i}^\sharp}\restriction_K)_{i\in\N},$
\item $ d=\dim(H_\infty),$
\item $\nu_{\infty}=\lim (\nu_i)$.
\end{itemize}

La conclusion recherchée n'est autre que la conclusion du Corollaire~\ref{Corollaire mesures}; il ne nous reste donc plus qu'à vérifier les hypothèses: \eqref{H1}, \eqref{H0}, \eqref{H2}, \eqref{H3}.
\subsubsection{Vérifications}

Nous avons
\[
\operatorname{Supp}{}\nu_i
=
\operatorname{Supp}{}{\mu_i}^\sharp\restriction_K 
\subseteq
K\cap \operatorname{Supp}{}({\mu_i}^\sharp) 
=
K\cap \Gamma\cdot H_i\cdot g_i
\subseteq
K\cap \Gamma\cdot H_\infty\cdot g_i=A(g_i)
\]
où la dernière inclusion vaut sauf pour peut-être un nombre fini d'indices~$i$, d'après les propriétés des suites de mesures de types Mozes-Shah données dans la section \ref{secMS}.
Cela assure la validité de~\eqref{H0}.

D'après la Proposition \ref{Haardim} tout ensemble définissable~\(A\subseteq \R^n\) tel que
$$
\dim(A)<\dim(H_\infty)=d
$$
 est négligeable pour pour~$\mu_\infty^\sharp$. Alors~$A\cap K$ sera a fortiori négligeable pour~${\mu_\infty}^\sharp\restriction_K$ et, vu la domination~\eqref{domination2}, pour~$\nu_\infty$. On a établi l'hypothèse~\eqref{H1}.

L'hypothèse~\eqref{Inégalité en bas} en bas, n'est qu'une manière d'écrire en bas l'hypothèse~\eqref{H2}.
Nous ne démontrons que l'implication utilisée. Nous passons de l'une à l'autre des inégalités par
\begin{equation}\label{eq7.6}
\nu_i(A'(b'_i))=\int_{K} \,\,\,f\restriction_K~{{{\mu_i}^\sharp}\restriction_K}=
\int_{G}\,\,\, f~{\mu_i}^\sharp
=\int_{\Gamma\sous G} f^\flat \mu_i \geq\int_{\Gamma\sous G} h \mu_i=\mu_i(\wV\cap \theta(U))
\end{equation}
où l'on a choisi
$$
f:=\chi_{\theta^{-1}(\wV)\cap U}:K\to \R_{\geq0}  \mbox{ , } h:=\chi_{\wV\cap\theta(U)}:\Gamma\backslash G\to \R_{\geq0}
$$
 et 
 $$
 f^\flat(\Gamma\cdot g):=\#\left(U\cap\theta^{-1}(\wV)\cap \theta^{-1}(\{\Gamma\cdot g\})\right)\geq \chi_{\wV\cap \theta(U)}(\Gamma\cdot g).
 $$
 On déduit alors \eqref{H2} de l'inégalité en bas ~\eqref{Inégalité en bas} car on a 
\begin{equation}\label{preuve majoration hypothèse en bas}
\liminf \nu_i(A'(b'_i))\geq\liminf \mu_i(\wV\cap\theta(U))>0.
\end{equation}

Pour l'inégalité~\eqref{H3}, nous estimons tout d'abord
\[
\dim A(b_i)\leq \dim (\Gamma\cdot H_{\infty}\cdot g_i) = \dim (H_{\infty})
\]
où la première dimension est la notion définissable et les autres celle de variété différentielle... Alors~\eqref{H3} découle de~\eqref{dimension monte}.

Nous avons bien vérifié les hypothèses voulues. La conclusion  du Corollaire \ref{Corollaire mesures} vaut donc pour notre sous-suite. Ceci termine la preuve du Théorème \ref{Charnière}.  

\end{proof}


\subsection{Variante à paramètres}\label{v1p}
La Proposition~\ref{Charnière} se généralise en la variante à paramètres ci-dessous, dont elle est  le cas particulier~$A=\{e\}$. Cette généralisation nous servira dans l'étude des suites de mesures de type Ratner quand de la masse se perd à l'infini.
\begin{theorem}[]\label{Charnière DGU}  
Soit  $ (\widehat{\mu_i})_{i\in \N}$ une suite de mesures de la forme Mozes-Shah, dans une écriture de type Mozes-Shah,
\begin{equation*}
\widehat{\mu_i}=\mu_{\HH_i}\cdot h_i\longrightarrow \widehat{\mu}_\infty=\mu_{\HH_\infty}\cdot h_\infty,
\end{equation*} 
ainsi qu'une suite~$(a_i)_{i\in\N}$ choisie dans une partie définissable~$A$ de~$G$, et posons enfin
\[
(\mu_i)_{i\in\N}=(\widehat{\mu}_i\cdot a_i)_{i\in\N}.
\]

Soit encore~$\wV\subseteq \Gamma\sous G$ une partie mesurable et~$U\subseteq G$ une partie définissable bornée de~$G$.

Nous faisons ces deux hypothèses.
\begin{itemize}
\item \emph{Hypothèse en bas:} Nous avons
\begin{equation}\label{Inégalité en bas DGU}
\liminf_{i\in\N}\mu_i(\wV\cap\theta(U\cdot a_i))>0,
\end{equation}
\item \emph{Hypothèse en haut:} La partie~$\theta^{-1}(\wV)\cap (U\cdot A)$ est définissable.
\end{itemize}

Alors, en haut, sauf pour au plus un nombre fini d'indices~$i\in\N$, l'ensemble
\(\theta^{-1}(\wV)\cap (U\cdot a_i)\) contient un ouvert non vide de
$\Gamma \cdot H_\infty\cdot h_i\cdot a_i.$

Pour ces mêmes indices, en bas, l'ensemble
\(\wV\cap \theta(U\cdot a_i)\) contient un ouvert non vide de
$$
\Gamma\sous\Gamma \cdot H_\infty\cdot h_i\cdot a_i.
$$
\end{theorem}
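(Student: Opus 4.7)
The plan is to reduce Theorem \ref{Charnière DGU} to the analysis performed in the proof of Theorem \ref{Charnière}, by using right-translation to move every moving object back onto a fixed compact. Writing $R_a\colon x\mapsto xa$ for the right action on $G$ and on $\Gamma\sous G$, the identity $\mu_i=\widehat{\mu}_i\cdot a_i=(R_{a_i})_\star\widehat{\mu}_i$ gives $\mu_i(X)=\widehat{\mu}_i(X\cdot a_i^{-1})$ for any Borel $X\subseteq \Gamma\sous G$; combined with $\theta(U\cdot a_i)=\theta(U)\cdot a_i$, the hypothesis en bas \eqref{Inégalité en bas DGU} rewrites as
\[
\liminf_{i\in\N}\widehat{\mu}_i\bigl(\wV\cdot a_i^{-1}\cap \theta(U)\bigr)>0.
\]
We have thus traded the moving compact $\overline{U\cdot a_i}$ for the fixed compact $K=\overline{U}$, paying the price that the ``test set'' now varies with $i$, in the definable family
\[
\bigl(A'(a)\bigr)_{a\in A}:=\bigl(\theta^{-1}(\wV)\cdot a^{-1}\cap U\bigr)_{a\in A}.
\]
Definability of this family follows from the hypothesis en haut: the total space $\{(a,y)\mid a\in A,\,ya\in\theta^{-1}(\wV)\cap(U\cdot A),\,y\in U\}$ is cut out of $A\times G$ by the definable set $\theta^{-1}(\wV)\cap (U\cdot A)$ and the (semi-)algebraic relation $ya\in U\cdot a$.

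Next, I would repeat almost verbatim the instantiation of Theorem \ref{Proposition mesures} from the proof of Theorem \ref{Charnière}, but with $\widehat{\mu}_i$ in place of $\mu_i$, $h_i$ in place of $g_i$ and the varying family $(A'(a))_{a\in A}$ in place of the constant one. Choosing a bounded definable $W\subseteq G$ containing all the $h_i$, one uses the definable family $(A(h))_{h\in W}=(K\cap \Gamma H_\infty h)_{h\in W}$ furnished by Corollaire \ref{dc}, and the finite measures $\nu_i=\widehat{\mu}_i^{\sharp}\!\restriction_K$, for which Prokhorov's Theorem \ref{Prokhorov} extracts a limit $\nu_\infty\leq \widehat{\mu}_\infty^{\sharp}\!\restriction_K$. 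Hypotheses \eqref{H0}, \eqref{H1}, \eqref{H3} are verified exactly as before: \eqref{H0} uses $\operatorname{Supp}{}\widehat{\mu}_i\subseteq \Gamma H_ih_i\subseteq \Gamma H_\infty h_i$ from (M.-S.~2); \eqref{H1} uses the Proposition \ref{Haardim} with $d=\dim H_\infty$; \eqref{H3} uses $\dim A(h_i)\leq \dim H_\infty$ and the dimension-monotonicity \eqref{dimension monte}. Finally, \eqref{H2} is obtained from the rewritten hypothesis en bas through the exact counterpart of the computation \eqref{eq7.6}, the right-translation identity $\mu_i^{\sharp}=(R_{a_i})_\star \widehat{\mu}_i^{\sharp}$ allowing to match integrals upstairs and downstairs.

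Corollaire \ref{Corollaire mesures} then yields that, for $i$ large enough, $A'(a_i)$ contains a non-empty open subset of $A(h_i)$, hence of $\Gamma H_\infty h_i$. Right-translating by $a_i$, and using that $R_{a_i}$ is a homeomorphism, this reads: $\theta^{-1}(\wV)\cap (U\cdot a_i)$ contains a non-empty open subset of $\Gamma H_\infty h_i a_i$, which is the conclusion ``en haut''. The conclusion ``en bas'' follows by pushing down through $\theta$, which is a local homeomorphism and hence open, exactly as at the very beginning of the proof of Theorem \ref{Charnière}. The only subtle point in all of this is the bookkeeping of right-translations, in particular making sure that the passage between the two levels through the correspondance \eqref{correspondance vague} commutes with $R_{a_i}$; everything else is precisely the proof of Theorem \ref{Charnière}, reread with $(h_i,A'(a_i))$ replacing $(g_i,A')$ and the conclusion translated back on the right.
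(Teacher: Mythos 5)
Votre preuve est correcte et suit essentiellement la même démarche que celle du texte : on remplace la famille constante de la preuve du Théorème~\ref{Charnière} par la famille définissable $\bigl((\theta^{-1}(\wV)\cap (U\cdot a))\cdot a^{-1}\bigr)_{a\in A}=\bigl((\theta^{-1}(\wV)\cdot a^{-1})\cap U\bigr)_{a\in A}$, dont la définissabilité se déduit de l'hypothèse en haut exactement comme vous le faites, on travaille avec $\nu_i=\widehat{\mu}_i^{\sharp}\restriction_K$ et les écritures $(h_i)$, puis on retraduit la conclusion par translation à droite par $a_i$. Les vérifications de \eqref{H0}, \eqref{H1}, \eqref{H2}, \eqref{H3} et le passage de la conclusion en haut à celle en bas sont identiques à ceux du texte.
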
 
\subsubsection{Démonstration} La démonstration est essentiellement  la même que celle du Théorème~\ref{Charnière}, et nous  n'indiquerons que les modifications à apporter pour prendre en compte le paramètre~$a_i\in A$.
\paragraph{Changement de famille}
La famille~\eqref{invoque définissabilité 2} sera remplacée par
\begin{equation}\label{invoque définissabilité 2 DGU}
A'(b')_{b'\in B'}=\left((\theta^{-1}(\wV)\cap (U\cdot a))\cdot a^{-1}\right)_{a\in A},\text{ pour }B'=A,
\end{equation}
et nous choisissons la suite
\[
(b'_i)_{i\in\N}=(a_i)_{i\in\N}.
\]
Vérifions la définissabilité de cette famille. On rappelle que l'ensemble~$E=\theta^{-1}(\wV)\cap (U\cdot A)$ est définissable.
Pour $a\in A$, on a $U\cdot a =(U\cdot A) \cap (U\cdot a)$ de sorte que   
\[\theta^{-1}(\wV)\cap (U\cdot a)=\theta^{-1}(\wV)\cap (U\cdot A)\cap (U\cdot a)= E\cap (U\cdot a)\]
on peut donc récrire notre famille sous la forme
\[
A'(b')_{b'\in B'}=\left((E\cap (U\cdot a))\cdot a^{-1}\right)_{a\in A}=\left((E\cdot a^{-1})\cap U\right)_{a\in A}
\]
qui est manifestement définissable.

\paragraph{Vérification d'hypothèse}
L'autre changement est  la vérification de l'inégalité en bas~\eqref{preuve majoration hypothèse en bas}. On pose 
$$
\nu_i= {\widehat{\mu_i}^{\sharp}}_{\vert K}.
$$
Comme 
\[
\nu_i(A'(b'_i))\geq \mu_i(\wV\cap\theta(U\cdot a_i)),
\]
on obtient
\begin{equation}\label{preuve majoration hypothèse en bas2}
\liminf \nu_i(A'(b'_i))\geq\liminf \mu_i(\wV\cap\theta(U\cdot a_i))>0.
\end{equation}

Les même calcul qu'en \ref{eq7.6}, en remplaçant~$\wV$ par~$\wV'=\wV\cdot a^{-1}$, et en modifiant~$f$ et~$h$ en conséquence donne
\begin{equation}\label{eq7.7}
\nu_i(A'(b'_i))=\int_{K} \,\,\,f\restriction_K~{{{\widehat{\mu}_i}^\sharp}\restriction_K}=
\int_{G}\,\,\, f~{\widehat{\mu}_i}^\sharp
\geq\int_{\Gamma\sous G} h \widehat{\mu}_i=\widehat{\mu}_i((\wV a_i^{-1})\cap \theta(U))=\mu_i(\wV\cap (Ua_i)).
\end{equation}

\paragraph{} Enfin il ne faut pas oublier de vérifier que l'usage de la Proposition~\ref{Proposition mesures}, et son Corollaire~\ref{Corollaire mesures}, invoqués
avec la nouvelle famille~\eqref{invoque définissabilité 2 DGU}, donne la conclusion cherchée, ce qui est le cas.

Plus précisément on déduira que l'ensemble
$ A'(b'_i)=\theta^{-1}(\wV)\cap (U\cdot a_i) a_i^{-1}$ contient un ouvert non vide de~$A(b_i)=\Gamma \cdot H_\infty\cdot h_i$. Il suffire de translater à droite par~$a_i$
pour obtenir la conclusion cherchée. Enfin le passage de la première conclusion à la seconde se fait sans changement.

\subsection{Preuve du Théorème \ref{teo1.6}}

Nous pouvons maintenant donner la preuve du Théorème \ref{teo1.6}. Nous commençons par le Lemme suivant qui nous permet de supposer que $M=\{1\}$.

\begin{lemme}
Soit $M\subset G$ un compact.
Le Théorème \ref{teo1.6} pour $S_{\Gamma,G,M}=\Gamma\backslash G/M$ est une conséquence du Théorème \ref{teo1.6} pour $S=\Gamma \backslash G$.
\end{lemme}
\begin{proof}
Soit $V$ une sous-variété analytique fermée de $S_{\Gamma,G,M}$ définissable dans $\R^{an,exp}$.  Soit 
$$
\alpha_M: S=\Gamma \backslash G\longrightarrow  S_{\Gamma,G,M} \ \  \mbox{ l'application } \Gamma g\mapsto \Gamma gM.
$$
Comme $\alpha_M$ est propre et définissable dans $\RR^{alg}$ pour les structures de variétés   $\RR^{alg}$-définissables  sur $S_{\Gamma,G,M}$ et $S$ introduites par Bakker-Klingler-Tsimerman (c.f section \ref{Vom}), $\widehat{V}=\alpha_M^{-1}(V)$ est analytique fermée et définissable dans $\R^{an,exp}$.

Soit 
$$
Z_i=\Gamma_{H_i}\backslash H_i(\RR)^+ g_i\cdot M
$$
une suite de sous-variétés homogènes de $V$. Alors 
$$
\widehat{Z}_i=\Gamma_{H_i}\backslash H_i(\RR)^+ g_i
$$
est une suite de sous-variétés homogènes de $\widehat{V}\subset S$. D'après le Théorème \ref{teo1.6} pour $\widehat{V}$ dans $S$, en passant éventuellement à une sous-suite, on peut supposer l'existence d'un sous-groupe algébrique $\HH_{\infty}$  tel que $\HH_i\subset \HH_{\infty}$ et tel que
$$
\widehat{Z}'_i:= \Gamma_{H_{\infty}}\backslash H_i(\RR)^+ g_i \subset \widehat{V}. 
$$
On en déduit l'existence de la suite de sous-variétés homogènes
$$
{Z}'_i:= \Gamma_{H_{\infty}}\backslash H_i(\RR)^+ g_i \cdot M\subset  V
$$
prévue  par le Théorème \ref{teo1.6} pour $V$ dans $S_{\Gamma,G,M}$.
\end{proof}

On suppose donc que $M=\{1\}$ et on se donne une suite de sous-variétés homogènes 
$$
(Z_i)_{i\in \N}=(\Gamma_{H_i}\backslash H_i(\RR)^+ g_i)_{i\in \N}
$$
 de $\widehat{V}\subset S=\Gamma\backslash G$. Soit $(\mu_i)_{i\in \N}=(\mu_{\HH_i}\cdot g_i)_{i\in \N}$ la suite de mesure de type Ratner associée. 
D'après la discussion de la section \ref{sDGU} on peut suppose, en passant au besoin à une sous-suite,  qu'il existe un parabolique $\PP$ et une écriture de type DGU pour $\PP$ de la suite $(\mu_i)_{i\in \N}$. On a alors 
$$
g_i=n_i m_i a_i k_i\in G=NMAK
$$
avec $h_i=n_i m_i\in H_P =NM$ et $H_i\subset H_{P}$ pour tout $i$. 

\begin{lemme}
Il existe une partie définissable bornée $U$ de $G$ telle que 
\begin{equation}
\liminf_{i\in\N}\mu_i(\wV\cap\theta(U\cdot a_i))>0,
\end{equation}
 et telle que $\theta^{-1}(\wV)\cap (U\cdot A^+_{\ge t})$ est définissable dans $\RR^{an, exp}$.
\end{lemme}
 \begin{proof}
 Notons que par hypothèse le support de $\mu_i$ est contenu dans $\widehat{V}$ de sorte que
 $$
 \mu_i(\wV\cap\theta(U\cdot a_i))= \mu_i(\theta(U\cdot a_i))
 $$
  D'après le Lemme \ref{bonK}, on peut choisir $U=V_P\subset H_P$ définissable borné vérifiant la première équation et tel que $\Sigma(\PP, V_P,t)=V_PA^+_{\ge t}K_{\infty}$ soit un ensemble de Siegel de $\G$ relativement à $\PP$. Les propriétés de définissabilité rappelées à la Proposition 
 \ref{defpi} assurent alors que 
 $$
 \theta^{-1}(\wV)\cap (U\cdot A^+_{\ge_t})\cdot K_{\infty}
 $$
  et par suite $\theta^{-1}(\wV)\cap (U\cdot A^+_{\ge_t})$ est définissable dans $\RR^{an,exp}$.
 \end{proof}

D'après le Théorème \ref{Charnière DGU},  $ \wV$ contient un ouvert non vide de  $\Gamma \backslash \Gamma H_{\infty}g_i$ pour  tout $i$. Par un argument de prolongement analytique 
$$
\Gamma \backslash \Gamma H_{\infty}g_i\subset \widehat{V}.
$$ 

Ceci termine la preuve du Théorème \ref{teo1.6}.

\section{Application aux variété arithmétiques: partie géométrique d'André-Oort}\label{sec9}

On reprend les notations des parties précédentes. On fixe en particulier un $\QQ$-groupe $\G$ semi-simple de type adjoint. On fixe un sous-groupe compact maximal $K_{\infty}$ de $G$.
 On suppose que l'espace symétrique $X:=G/K_{\infty}$ est hermitien. On fixe un point $x_0$ de $X$ dont le stabilisateur dans $G$ est $K_{\infty}$. On fixe un réseau arithmétique $\Gamma\subset \G(\Q)$ et on note
$$
\alpha=\alpha_{x_o}: G\longrightarrow X \ \ \ \ \ \mbox{  et } \beta=\beta_{x_o}: \Gamma \backslash G\longrightarrow  \Gamma\backslash X
$$
les applications données respectivement par $g\mapsto g.x_0=\alpha(g)$ et $\Gamma g\mapsto \Gamma g.x_0= \beta(\Gamma g)$

On a alors le diagramme commutatif suivant.

$$
\xymatrix{ 
&G \ar[rd]^{\theta} \ar[ld]_{\alpha_{x_0}} \\
X  \ar[rd]^{\pi} && \Gamma\backslash G \ar[ld]_{\beta_{x_0}} \\ 
& S=\Gamma\backslash X  
}
$$

On donne  dans cette section une preuve du  Théorème \ref{T1}. 

 \begin{theorem}
 Une sous-variété algébrique irréductible $V$ de $S$ qui contient un sous-ensemble Zariski-dense de sous-variétés faiblement spéciales de dimension positive est une sous variété faiblement spéciale ou se décompose en un produit faiblement spécial.
 \end{theorem}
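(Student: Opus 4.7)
Le plan est de suivre l'argument esquissé à la fin de la section~\ref{strategie}, en s'appuyant sur le Théorème~\ref{T2}, le Corollaire qui le suit, et la description explicite des produits faiblement spéciaux donnée en~\ref{intro produits}.

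Je commencerais par choisir une suite~$(Z_n)_{n\in\N}$ de sous-variétés faiblement spéciales de~$V$ qui soit Zariski dense dans~$V$, et dont chaque terme~$Z_n$ soit \emph{maximal} parmi les sous-variétés faiblement spéciales de dimension non nulle contenues dans~$V$. L'existence d'une telle suite s'obtient ainsi: partant d'une suite Zariski dense $(Y_n)$ de sous-variétés faiblement spéciales de dimension positive contenues dans~$V$, on plonge chaque~$Y_n$ dans une sous-variété faiblement spéciale maximale~$Z_n$ de~$V$. L'existence de~$Z_n$ repose sur la finitude des $\G(\Q)$-classes de conjugaison de $\Q$-sous-groupes réductifs de $\G$ combinée à un argument noethérien local. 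La suite~$(Z_n)$ contient~$(Y_n)$, donc est encore Zariski dense dans~$V$.

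Ensuite, j'invoque le Théorème~\ref{T2}: quitte à extraire une sous-suite (qui reste Zariski dense dans~$V$), j'obtiens des écritures $Z_n = \Gamma\backslash\Gamma \HH_n(\R)^+\cdot x_n$ et un sous-groupe algébrique~$\HH_\infty$ sans caractère rationnel tel que $\HH_n \subseteq \HH_\infty$ et tel que les sous-ensembles homogènes $Z'_n = \Gamma\backslash\Gamma \HH_\infty(\R)^+\cdot x_n$ sont contenus dans~$V$. Le Corollaire qui suit le Théorème~\ref{T2}, appliquant la Proposition~\ref{BAL} (``baby case'' d'Ax-Lindemann hyperbolique) à la clôture de Zariski de~$Z'_n$, fournit pour chaque~$n$ une sous-variété faiblement spéciale $\tilde{Z}_n = \overline{Z'_n}^{\text{Zar}}$ contenue dans~$V$ et contenant~$Z_n$. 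La maximalité de~$Z_n$ dans~$V$ force alors $\tilde{Z}_n = Z_n$, et par comparaison des dimensions~$\dim(\HH_n) = \dim(\HH_\infty)$, donc $\HH_n = \HH_\infty$ (tenant compte de la connexité, les composantes neutres coïncident et l'inclusion $\HH_n\subseteq \HH_\infty$ est une égalité).

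Posant~$\HH_1 := \HH_\infty$ et~$\HH_2 := Z_\G(\HH_1)^\circ$ (composante neutre du centralisateur), j'obtiens que le $\Q$-sous-groupe~$\HH := \HH_1 \cdot \HH_2$ satisfait $\HH^{\text{ad}} = \HH_1^{\text{ad}} \times \HH_2^{\text{ad}}$. Pour chaque~$n$, comme~$Z_n = [\HH_1, x_n]$ est faiblement spéciale, le point~$x_n$ définit, via la factorisation décrite en~\ref{intro produits}, une sous-variété $V_{2,n} := [\HH_2, x_n]$, et le produit faiblement spécial associé $\psi(S_1 \times V_{2,n})$ contient~$Z_n$. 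Reste à démontrer deux choses: d'une part que tous les~$V_{2,n}$ se recollent pour fournir une \emph{unique} sous-variété algébrique~$V_2\subseteq S_2$, d'autre part que $V = \psi(S_1 \times V_2)$. Le point clef est que la suite des~$Z_n$ étant Zariski dense dans~$V$, la suite des~$\psi(S_1\times V_{2,n})$ l'est aussi; si tous ces produits sont contenus dans~$V$ (ce qui demande de montrer que l'action à droite par~$\HH_2(\R)^+$ sur les~$Z_n$ reste dans~$V$, conséquence du fait que~$V$ contient déjà les~$Z'_n = [\HH_\infty, x_n]$ et que~$\HH_\infty = \HH_1$ agit transitivement sur les facteurs~$X_{H_1}$ dans chaque~$x_n$), alors la Zariski densité de cette famille dans~$V$ entraîne que~$V$ est elle-même, soit faiblement spéciale (si~$\dim S_1 = 0$ ou~$\dim V_2 = 0$ ou~$\dim S_2 = 0$), soit un produit faiblement spécial. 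La principale difficulté attendue est la justification rigoureuse de la maximalité en étape~1 (qui requiert de manier la théorie des sous-groupes rationnels) et la vérification de cohérence des~$V_{2,n}$ à l'étape finale, qui peut obliger à passer à une nouvelle sous-suite pour identifier la composante~$V_2$ commune.
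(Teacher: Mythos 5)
La première moitié de votre argument (choix d'une suite Zariski dense de sous-variétés faiblement spéciales maximales, invocation du Théorème~\ref{T2} et de son corollaire via la Proposition~\ref{BAL}, conclusion $\tilde Z_n=Z_n$ puis $\HH_n=\HH_\infty$) coïncide avec la démarche du texte. C'est l'étape finale qui présente une lacune réelle.

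Votre construction pose $\HH_1=\HH_\infty$, $\HH_2=Z_\G(\HH_1)^\circ$ et tente de recoller les $[\HH_2,x_n]$ en un produit $\psi(S_1\times V_2)$ égal à $V$. Cela ne peut fonctionner que si $\HH_\infty$ est \emph{normal} dans $\G$ (ou dans le groupe de Mumford--Tate générique de $V$): sinon $\HH_1\cdot\HH_2$ est en général un sous-groupe propre, les orbites $\pi(H_\infty(\R)^+\cdot x)$ ne feuillettent aucune décomposition en produit de $X$, et les $V_{2,n}$ ne vivent pas dans un $S_2$ commun. De plus, votre justification de l'inclusion $\psi(S_1\times V_{2,n})\subseteq V$ est incorrecte: savoir que $V$ contient $Z'_n=[\HH_\infty,x_n]$ ne dit rien sur $[\HH_\infty,h_2\cdot x_n]$ pour $h_2\in\HH_2(\R)^+$; l'argument est circulaire. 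Le texte comble précisément ce trou par la Proposition~\ref{p3.4} (Prop.~3.5 de \cite{Ullmo3}), qui fournit la dichotomie: ou bien $\HH_\infty$ n'est pas normal dans $\G$ et alors \emph{tous} les $Z_n=\Gamma\backslash\Gamma H_\infty x_n$ sont contenus dans une union finie de sous-variétés spéciales strictes, ou bien $\HH_\infty$ est normal et l'on a la décomposition $X=X_1\times X_2$ avec $Z_n=\pi(X_1\times\{x_n\})$. Pour exclure la première branche, il faut deux ingrédients absents de votre proposition: la réduction préalable au cas où $V$ est Hodge générique (en remplaçant $S$ par la plus petite sous-variété spéciale contenant $V$), et le fait que la suite $(Z_n)$ peut être choisie \emph{stricte} (aucune sous-suite infinie contenue dans une sous-variété spéciale propre), ce qui découle de la généricité de la suite dans $V$ combinée à la Hodge-généricité de $V$. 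Sans ces deux points, la Zariski-densité seule ne donne pas de contradiction. Une fois $\HH_\infty$ normal acquis, la conclusion par Zariski-densité des $\pi(X_1\times\{x_n\})$ est immédiate et votre recollement des $V_{2,n}$ devient superflu.

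Signalons aussi un point mineur: la finitude des $\G(\Q)$-classes de conjugaison de $\Q$-sous-groupes réductifs que vous invoquez pour l'existence des $Z_n$ maximaux est fausse en général (il y a une infinité de $\Q$-formes); l'existence de sous-variétés faiblement spéciales maximales se justifie plutôt par un argument de chaînes croissantes et de dimension bornée.
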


La preuve passe par celle du Théorème \ref{T2}

Soit $V$ une sous-variété irréductible de $S$ qui admet un sous-ensemble Zariski dense de sous-variétés faiblement spéciales de dimension positive. On peut supposer que $V$ est Hodge générique.   
Soit en effet $S'$ la plus petite sous-variété spéciale de $S$ qui contient $V$. Il existe alors un $\Q$-sous-groupe semi-simple $\G'$ de $\G$  et un point $x_1$ de $X$ tels que
$S'= \Gamma'\backslash X'$ avec $X':=G'.x_1$ hermitien symétrique et $\Gamma':=\Gamma\cap \G'(\Q)\subset \G'(\Q)$ un réseau arithmétique. Dans cette situation $V$ est Hodge générique dans $S'$
et la conclusion du Théorème pour $V$ dans $S'$ est identique à celle de $V$ dans $S$. On suppose donc dans la suite que $V$ est Hodge générique dans $S$.

On peut alors construire une suite $(Z_i)_{i\in \N}$ de sous-variétés faiblement spéciales de $V$ de dimension positive telle que
\begin{itemize}

\item Pour tout $i\in\N$, $Z_i$ est maximal parmi les sous-variétés faiblement spéciales de $V$.
\item La suite $(Z_i)_{i\in \N}$ est {\bf générique} dans $V$: pour toute sous-variété stricte $W$ de $V$, l'ensemble
$$
\{i\in \N, Z_i\subset W\}
$$
est de cardinal fini. On utilise ici l'hypothèse que $V$ est irréductible.

\item La suite $(Z_i)_{i\in \N}$ est {\bf  stricte} dans $S$: Pour toute sous-variété spéciale $S'$ stricte de $S$,
$$
\{i\in \N, Z_i\subset S'\}
$$
est de cardinal fini. C'est une conséquence de la propriété précédente car comme $V$ est Hodge générique, pour toute variété spéciale $S'$ stricte de $S$, $V\cap S'$ est un fermé de Zariski stricte de $V$ donc 
$$
\{i\in \N, Z_i\subset S'\}=\{i\in \N, Z_i\subset V\cap S'\}
$$
est bien de cardinal fini.

\item Il existe un sous-groupe parabolique $\PP$ de $\G$ et une suite de mesure $(\mu_i)_{i\in \N}$ de type Ratner sur $\Gamma\backslash G$ de la forme DGU dans $G$, muni de $K_{\infty}=K_{x_0}$ et de $\PP$  avec les écritures 
$$
\mu_i=\mu_{\HH_i}\cdot g_i\qquad{g_i=n_i\cdot m_i\cdot a_i\cdot k_i} \qquad{h_i=n_im_i\in H_P}
$$
où

 \item[{[DGU~1]}]  La suite~\((\widehat{\mu_i})_{i\in \N}:=(\mu_{\HH_i}\cdot h_i)_{i\in \N}\) est de la forme Mozès-Shah:

\item[(M.-S.~1)] la suite~$(h_i)_{i\in \N}$ est convergente, de limite notée~$h_\infty$;
\item[(M.-S.~2)] parmi les sous-groupes algébriques de~$\G$  contenant une infinité des~$\HH_i$, il en existe un unique minimal, que nous noterons~$\HH_\infty=\limsup_{i\rightarrow \infty} \HH_i$.
 On a de plus $\HH_i\subset \HH_P$ pour tout $i\ge 0$.

\item[{[DGU~2]}] la suite~$(a_i)_{i\in \N}$ vérifie
\[
\forall \alpha\in\Phi(P,A_P), \lim_{i\to \infty}\alpha(a_i)=\infty
\]

\item On a $Z_i= \Gamma\backslash \Gamma H_ig_i.x_0$. La mesure canonique sur $Z_i$ est  $\nu_i=(\beta_{x_0})_{\star} \ \mu_i$ et on pose 
 $\widehat{\nu_i}=(\beta_{x_0})_{\star}\  \widehat{\mu_i}$

\item On pose $\wV=\beta_{x_0}^{-1}(V)\subset \Gamma \backslash G$.

\item Comme dans la preuve du Théorème \ref{teo1.6} donnée dans la section précédente,
Il existe une partie définissable bornée $U$ de $G$ telle que 
\begin{equation}
\liminf_{i\in\N}\mu_i(\wV\cap\theta(U\cdot a_i))>0,
\end{equation}
 et telle que $\theta^{-1}(\wV)\cap (U\cdot A^+_{\ge _t})$ est définissable dans $\RR^{an, exp}$.

\end{itemize}

D'après le Théorème \ref{Charnière DGU},   $ \wV$ contient un ouvert non vide de  $\Gamma \backslash \Gamma H_{\infty}g_i$ pour presque tout $i$. De plus $\HH_i\subset \HH_{\infty}$ pour presque tout $i$.
On en déduit que $V$ contient un ouvert non vide de $\Gamma \backslash \Gamma H_{\infty}g_i.x_0$ pour presque tout $i$. Par un argument de prolongement analytique on obtient les inclusions
\begin{equation}
Z_i\subset \Gamma \backslash \Gamma H_{\infty}g_i.x_0\subset V.
\end{equation}

Par le {\bf baby case} du Théorème d'Ax-Lindemann hyperbolique (Proposition \ref{BAL}) la clôture de Zariski 
$Z'_i$ de $\Gamma \backslash \Gamma H_{\infty}g_i.x_0$ est faiblement spéciale et on a en fait

\begin{equation}
Z_i\subset \Gamma \backslash \Gamma H_{\infty}g_i.x_0\subset  Z'_i\subset V.
\end{equation}

par notre hypothèse de maximalité, on obtient que
\begin{equation}
Z_i =\Gamma \backslash \Gamma H_{\infty}g_i.x_0=Z'_i
\end{equation}
d'où $\HH_i=\HH_{\infty}$ pour tout $i$ assez grand et
\begin{equation}
Z_i=\Gamma\backslash \Gamma H_{\infty}g_i.x_0=\Gamma\backslash \Gamma H_{\infty} x_i
\end{equation}
avec $x_i:=g_i.x_0$.

On est alors ramené à l'étude de l'ensemble $\cE(\HH_{\infty}) $ des sous-variétés faiblement spéciales de $S$ de la forme 
$\Gamma\backslash \Gamma H_{\infty} x$ pour un $x\in X$. Cette étude est faite dans la section 3 de \cite{Ullmo3} dont la  Proposition 3.5
est résumée dans la Proposition suivante.

\begin{proposition} \label{p3.4}
Soit $\HH$ un $\Q$-sous-groupe semi-simple de $\G$ tel que
$\cE(\HH)$ soit non vide. On suppose que $\HH \neq \G^{\rm der}$.

\item (i) Si $\HH$ n'est pas  un sous-groupe normal de $\G$
alors $\cE(\HH)$ est contenue dans une union finie de sous-vari\'et\'es sp\'eciales
strictes de $S$.
  
\item (ii) Si $\HH$ est un sous-groupe normal de $\G$,
 il existe une décomposition de  $X$ sous la
 forme \linebreak $X=X_{1}\times X_{2}$ en un produit de deux espaces symétriques hermitiens
 telle que $\cE(\HH)$ est l'ensemble des 
   $$
  \pi( {X_{1}\times \{x_{2}\}})
  $$
  avec $x_{2}\in X_{2}$.
 \end{proposition}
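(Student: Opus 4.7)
Mon plan est de séparer le raisonnement en fonction de la position de $\HH$ dans $\G$: soit $\HH$ est un facteur normal (cas (ii)), soit il est ``tordu'' (cas (i)). Dans le premier cas, $\HH$ se détachera comme facteur direct et les orbites $\HH(\R)^+\cdot x$ couvriront des tranches entières d'une décomposition en produit de $X$; dans le second, l'existence même d'un point $x\in X$ admissible (au sens où $\HH(\R)^+\cdot x$ est hermitien symétrique) contraindra $x$ à être non Hodge-générique, de sorte que $\pi(x)$ tombera dans une sous-variété spéciale stricte.

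Pour le cas (ii), quand $\HH$ est normal dans $\G$, j'exploiterais le fait que $\G$ est adjoint. Puisque $\HH$ est un sous-groupe normal semi-simple, le centralisateur $\HH' := Z_\G(\HH)$ est également normal semi-simple et on a une décomposition en produit direct $\G \simeq \HH \times \HH'$ sur $\Q$. L'hypothèse $\HH \neq \G^{\der}$ assure que $\HH'$ est non trivial. Cette décomposition induira $X \simeq X_1 \times X_2$ en produit de deux espaces hermitiens symétriques non triviaux, associés respectivement à $\HH(\R)^+$ et $\HH'(\R)^+$. Pour tout $x = (x_1, x_2) \in X$, la transitivité de $\HH(\R)^+$ sur $X_1$ donnera $\HH(\R)^+ \cdot x = X_1 \times \{x_2\}$, et l'image par $\pi$ fournira exactement la description annoncée de $\cE(\HH)$.

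Pour le cas (i), quand $\HH$ n'est pas normal, l'idée clé serait de passer par le normalisateur $\Nor_\G(\HH)$, qui est alors un $\Q$-sous-groupe algébrique propre de $\G$. L'observation centrale, s'appuyant sur la caractérisation de Moonen des sous-variétés faiblement spéciales, est la suivante: si $\HH(\R)^+\cdot x$ est hermitien symétrique, alors $\HH$ apparaît comme facteur normal de $\MT(x)^{\der}$ (à isogénie près). Il s'ensuivrait que $\MT(x)^{\der}$ normalise $\HH$, et comme le tore central $Z(\MT(x))^0$ commute à $\HH$, que $\MT(x)$ tout entier normalise $\HH$. Ainsi $\MT(x) \subseteq \Nor_\G(\HH)^0 \subsetneq \G$, et $\pi(x)$ n'est pas Hodge-générique: la plus petite sous-variété spéciale de $S$ contenant $\pi(x)$, à savoir $\pi(\MT(x)(\R)^+ \cdot x)$, est une sous-variété spéciale stricte qui contient $[\HH, x]$.

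L'obstacle principal serait l'extraction d'une famille \emph{finie} de sous-variétés spéciales recouvrant $\cE(\HH)$. Pour cela, je m'appuierais sur la finitude, à $\Gamma$-conjugaison près, des $\Q$-sous-groupes réductifs de $\Nor_\G(\HH)^0$ pouvant servir de groupe de Mumford-Tate à un point admissible: c'est une application de la théorie de la réduction arithmétique combinée à la rigidité des sous-données de Shimura. Ce point technique est précisément celui développé à la section 3 de \cite{Ullmo3}. Il en résulterait que $\cE(\HH)$ est recouvert par une union finie de sous-variétés spéciales strictes de $S$, conclusion recherchée.
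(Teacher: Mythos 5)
La Proposition~\ref{p3.4} n'est pas démontrée dans le texte: c'est un résumé de la Proposition 3.5 de \cite{Ullmo3}, à laquelle le papier renvoie purement et simplement. Votre esquisse suit pour l'essentiel la stratégie de cette référence. Le cas (ii) est correct: $\G$ étant adjoint, un sous-groupe normal semi-simple $\HH$ se détache en facteur direct $\G\simeq\HH\times Z_\G(\HH)$, d'où $X=X_1\times X_2$ et $\HH(\R)^+\cdot x=X_1\times\{x_2\}$, les deux facteurs étant non triviaux grâce à $\HH\neq\G^{\rm der}$ et à $\cE(\HH)\neq\emptyset$. Pour le cas (i), la réduction est également la bonne: si $[\HH,x]$ est faiblement spéciale, la description de Moonen place $\HH$ comme facteur normal du groupe dérivé du groupe de Mumford--Tate générique de l'orbite (pour $x$ Hodge-générique dans son orbite), de sorte que ce groupe est contenu dans $\widehat{\HH}:=\HH\cdot Z_\G(\HH)^\circ$, sous-groupe propre dès que $\HH$ n'est pas normal.

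En revanche, l'énoncé de finitude que vous invoquez pour conclure --- la finitude, à $\Gamma$-conjugaison près, des sous-groupes réductifs de $\Nor_\G(\HH)^0$ pouvant servir de groupe de Mumford--Tate --- n'est pas celui qu'il faut, et il est faux en général: déjà dans $\cA_g$, les tores CM fournissent une infinité de classes de $\Gamma$-conjugaison de groupes de Mumford--Tate. L'ingrédient correct, utilisé aussi bien dans \cite{Ullmo3} que dans l'appendice du présent texte (preuve que $\Hbold_\infty$ est normal), est la finitude des sous-données de Shimura (ou de Hodge) dont le groupe sous-jacent est le groupe \emph{fixé} $\widehat{\HH}$: il n'y a qu'un nombre fini de classes de conjugaison complexes de morphismes $\SS\to\widehat{\HH}_\R$ admissibles, et pour chacune un nombre fini de formes réelles, contrôlé par $\ker\bigl(H^1(\R,L_x)\to H^1(\R,\widehat{\HH}(\C))\bigr)$. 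Chaque $[\HH,x]\in\cE(\HH)$ est alors contenu dans la sous-variété spéciale stricte $\pi(\widehat{\HH}(\R)^+\cdot x)$, et ces dernières sont en nombre fini par ce qui précède. Avec cette substitution, votre argument coïncide avec celui de la référence citée et est complet.
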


Si $\HH_{\infty}$ n'est pas normal dans $\G$, la première partie de la Proposition montre que les $Z_i$ sont contenus dans une union finie de sous-variétés spéciales strictes de $V$. Cela contredit notre hypothèse que la suite $(Z_i)_{i\in \NN}$ est stricte. On en déduit alors que $\HH_{\infty}$ est normal dans $\G$. et que 
$$
Z_i=   \pi( {X_{1}\times \{x_{i}\}})
$$
pour un $x_i\in X_2$. Comme les $Z_i$ sont Zariski denses dans $V$, on en déduit que $V$ est faiblement spéciale (si $\HH_{\infty}=\G$) ou se décompose en un produit faiblement spécial. Ceci termine la preuve du Théorème \ref{T1}

\appendix

\section{Equidistribution des sous-vari\'et\'es faiblement sp\'eciales horizontales dans les domaines de p\'eriodes }\label{apendix}

\centerline{Jiaming Chen, Rodolphe Richard et Emmanuel Ullmo}

\bigskip

Nous pr\'esentons dans cette annexe  une généralisation du Th\'eor\`eme  \ref{T1} dans le contexte des $\ZZ$-variations de structures de Hodge  polarisables ($\ZZ$-VHS).
\bigskip

\vspace{-1em}

\subsection{Sous-variétés faiblement spéciales des domaines de p\'eriodes} Dans cette sous-section, nous rappelons les d\'efinitions pertinentes de la th\'eorie de Hodge et des sous-vari\'et\'es sp\'eciales et faiblement sp\'eciales pour une $\ZZ$-variation  de structures de Hodge polarisables. Nous faisons r\'ef\'erence \`a \cite{Klingler17}, \cite{KO19} Section 4, ou à \cite{C21} Section 3 pour plus de d\'etails. 

Soit $\VV$ un $\ZZ$-VHS sur une vari\'et\'e quasi-projective complexe irr\'eductible lisse $S$. Soit $\Gbold$ son groupe de Mumford-Tate générique. On fixe un point Hodge g\'en\'erique $o\in S$. La structure de Hodge sur la fibre $V := \VV_{\QQ, o}$ induit un morphisme de $\RR$-groupes alg\'ebriques $x_o\colon \SS\to\Gbold_\RR$. Soit $\Dcal$ la $\Gbold(\RR)$-classe de conjugaison de $x_o$ et soit $\Dcal^+$ la composante connexe de $\Dcal$ contenant $x_o$. Le couple $(\Gbold, \Dcal^+)$ (voir \cite{C21} D\'efinition 3.1), est la \emph{donn\'ee de Hodge connexe g\'en\'erique} associ\'ee \`a $(S, \VV)$.

Soit $\Gam$ un r\'eseau arithm\'etique net de $\Gbold(\QQ)_+$, o\`u $\Gbold(\QQ)_+$ est le stabilisateur dans $\Gbold(\QQ)$ de $\Dcal^+$. Apr\`es passage \`a une recouvrement \'etale finie de $S$, on peut supposer que $\Gam$ contient le groupe de monodromie de $S$ pour $\VV$, \`a savoir l'image de $\pi_1(S,o)$ dans $GL(V_{\ZZ})$. Le quotient arithm\'etique 
\[
    \Gam\bs\Dcal^+ = \Gam\bs\Gbold(\RR)_+/M_o=\colon S_{\Gam,\Gbold, M_o},
\] 
est  la vari\'et\'e de Hodge connexe associ\'ee au triplet $(S, \VV, \Gam)$. Ici, $\Gbold(\RR)_+$ est le stabilisateur dans $\Gbold(\RR)$ de $\Dcal^+$, $M_o$ est l'intersection du sous-groupe d'isotropie de $x_o$ dans $\Gbold(\RR)$ avec $\Gbold(\RR)_+$ et son image dans $\Gbold^{\ad}(\RR)^+$ est compacte. La vari\'et\'e de Hodge connexe $\Gam\bs\Dcal^+$ est une vari\'et\'e analytique complexe mais n'a en g\'en\'eral pas de structure alg\'ebrique sous-jacente. La $\ZZ$-VHS $\VV$ peut alors \^etre d\'ecrite par son application de p\'eriode
\[
\psi\colon S\to \Gam\bs\Dcal^+,
\]
 qui est holomorphe, horizontal et nous avons le diagramme commutatif suivant
  \begin{displaymath}
	\begin{tikzpicture}
		\node (A) at (-2,2) {$\widetilde{S}$};
		\node (B) at (-2,0) {$S$};
		\node (C) at (0,2) {$\Dcal^+$};
		\node (D) at (0,0) {$\Gam\bs\Dcal^+$};
	
		\path[-latex]
		(A) edge node[left]{$p$} (B)
		(A) edge node[above]{$\widetilde{\psi}$} (C)
		(B) edge node[below]{$\psi$} (D)
		(C) edge node[right]{$\pi$} (D);
   \end{tikzpicture}
   \end{displaymath}
   o\`u $p$ est le recouvrement topologique universel de $S$, $\widetilde{\psi}$ le rel\`evement de $\psi$ et $\pi$  la projection naturelle.
   
   Dans \cite{Klingler17}, Klingler a introduit les notions de sous-vari\'et\'es sp\'eciales et faiblement sp\'eciales pour un $\ZZ$-VHS qui g\'en\'eralisent les notions correspondantes de la théorie des variétés de Shimura. Nous commen\c{c}ons par rappeler ces d\'efinitions.
   
   \begin{definition}
       [\cite{Klingler17}]
\begin{itemize}
	\item[(1)] une sous-vari\'et\'e \emph{sp\'eciale} de $\Gam\bs\Dcal^+$ est l'image d'une variété de Hodge connexe $Y$ par un morphisme de Hodge $Y\to\Gam\bs\Dcal^+$.
    \item[(2)] Une \emph{sous-vari\'et\'e faiblement sp\'eciale} de $\Gam\bs\Dcal^+$ est une sous-variété spéciale ou  l'image $\varphi(Y_1\times \{t_2\})$ pour un morphisme de Hodge $\varphi\colon Y_1\times Y_2\to\Gam\bs\Dcal^+$  avec $t_2\in Y_2$.	\end{itemize} 
   \end{definition}
   
   \begin{remarque}
   	   Un r\'esultat fondamental de Cattani, Deligne et Kaplan \cite{CattaniDeligneKaplan95} dit que si $W$ est une sous-vari\'et\'e sp\'eciale de $G\bs\Dcal^+$, alors chaque composante analytique irr\'eductible de $\psi^{-1}(W)$ est une sous-vari\'et\'e alg\'ebrique de $S$. R\'ecemment, Bakker, Klingler et Tsimerman \cite{BKT} ont donné une preuve alternative de cet énoncé en \'etablissant des propriétés de définassibilité  de l'application de p\'eriode dans une structure o-minimale et en appliquant un Lemme de Chow convenable dans ce contexte. Cett m\'ethode permet \'egalement de  montrer que si $W$ est une sous-vari\'et\'e faiblement sp\'eciale de $\Gam\bs\Dcal^+$, alors $\psi^{-1}(W)$ est une sous-vari\'et\'e alg\'ebrique de $S$ (voir \cite{KO19}, Proposition 4.8).
   \end{remarque}

Comme la d\'efinissabilit\'e de l'application de  p\'eriode sera utilis\'ee de mani\`ere cruciale dans la preuve de nos r\'esultats, nous rassemblons ici le th\'eor\`eme 1.1 de \cite{BKT} dans la g\'en\'eralit\'e que nous voulons pour la commodit\'e :

\begin{theorem}[Bakker, Klingler and Tsimerman \cite{BKT}, Thm. 1.1] \label{BKT20}

\begin{itemize}
	\item[(1)] Il existe une structure de vari\'et\'e  $\RR{\alg}$-définissable sur la vari\'et\'e de Hodge connexe $\Gam\bs\Dcal^+$. 
\item[(2)] Si on muni  $S$ (resp. $\Gam\bs\Dcal^+$)  de la  structure de vari\'et\'e $\RR^{\an, \exp}$-d\'efinissable \'etendant la structure de vari\'et\'e $\RR^{\alg}$-d\'efinissable sur $S$  (resp $\Gam\bs\Dcal^+$) provenant de sa structure de variété alg\'ebrique complexe (resp. provenant de la structure  de vari\'et\'e  $\RR^{\alg}$-définissable sur $\Gam\bs\Dcal^+$ donn\'ee par la partie (1)), l'application de  p\'eriode $\psi\colon S\to\Gam\bs\Dcal^+$ est $\RR{\an, \exp}$-d\'efinissable.
\item[(3)] Pour tout $q\in\Gbold(\QQ)_+$, la correspondance de Hecke :
\[
   T_q\colon \Gam\bs\Dcal^+\stackrel{\pi_1}{\longleftarrow}q^{-1}\Gam q\cap\Gam\bs\Dcal^+\stackrel{q\cdot}{\longrightarrow}\Gam\cap q\Gam q^{-1}\bs\Dcal^+\stackrel{\pi_2}{\longrightarrow}\Gam\bs\Dcal^+,
\]
est $\RR{\alg}$-d\'efinissable par rapport \`a la structure d\'efinissable sur $\Gam\bs\Dcal^+$ donn\'ee par la partie (1). Ici, $\pi_1$ et $\pi_2$ sont les projections \'etales finies naturelles et l'application $q\cdot$ est la multiplication \`a gauche par $q$.
\end{itemize}	
\end{theorem}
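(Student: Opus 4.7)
Esquissons comment démontrer ce théorème, en suivant la stratégie de Bakker, Klingler et Tsimerman. Le plan combine la théorie de la réduction des groupes arithmétiques (Proposition~\ref{Siegel}) avec l'analyse asymptotique des applications de périodes à la Schmid. Les trois assertions se traitent séparément, mais dans cet ordre, car (2) et (3) reposent sur la structure construite en (1).

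Pour la partie (1), l'idée est de recouvrir $\Gam\bs\Dcal^+$ par un nombre fini d'images d'ensembles de Siegel $\Sigma_M(\PP_i, V_i, t_i, K_\infty)$, où les $\PP_i$ parcourent un système de représentants des classes de $\Gamma$-conjugaison de $\Q$-paraboliques de $\Gbold$, et où $K_\infty$ est un compact maximal fixé contenant $M_o$. Chaque ensemble de Siegel est semi-algébrique dans $\Gbold(\RR)_+/M_o$ et fournit une carte définissable. Les changements de cartes sont gouvernés par les éléments de l'ensemble fini $\{\gamma \in \Gamma \mid \gamma \Sigma_i \cap \Sigma_j \neq \emptyset\}$ donné par la Proposition~\ref{Siegel}(ii), dont l'action sur $\Dcal^+$ est polynomiale; on en déduit une structure de variété $\RR^{\alg}$-définissable. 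Pour la partie (3), la correspondance de Hecke $T_q$ se factorise en $\pi_2\circ (q\cdot)\circ \pi_1$, où les projections étales finies $\pi_1, \pi_2$ entre quotients arithmétiques héritent de la structure $\RR^{\alg}$-définissable construite en (1), et la multiplication à gauche par $q\in\Gbold(\Q)_+$ sur $\Dcal^+$ est la restriction d'une action polynomiale de $\Gbold$ sur un espace de drapeaux convenable. La définissabilité $\RR^{\alg}$ en découle immédiatement.

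La partie (2) est la plus délicate et constitue l'obstacle principal. On exploite la structure algébrique de $S$ via une compactification projective lisse $\overline{S}\supseteq S$ telle que $\overline{S}\smallsetminus S$ soit un diviseur à croisements normaux (Hironaka). Près d'un point de la frontière, on choisit des coordonnées locales $(z_1,\ldots,z_k,w_1,\ldots,w_m)$ dans lesquelles la frontière est $z_1\cdots z_k=0$. Le théorème de l'orbite nilpotente de Schmid affirme alors qu'un relèvement local de $\psi$ dans un ensemble de Siegel convenable s'écrit asymptotiquement sous la forme $\exp\bigl(\sum_i (\log z_i) N_i\bigr) \cdot F(z,w)$, avec $F$ holomorphe au bord et les $N_i$ nilpotents. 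Après substitution $z_i = \exp(\zeta_i)$ dans un secteur approprié, ce relèvement devient une composition de fonctions analytiques restreintes et de l'exponentielle, donc $\RR^{\an,\exp}$-définissable. Le raffinement par le théorème de l'orbite $\SL_2$ de Schmid, combiné à un argument de compacité sur $\overline{S}$, permet de globaliser et de recoller avec le recouvrement de Siegel construit en (1). La difficulté technique principale réside précisément dans ce contrôle asymptotique et dans la vérification que tous les éléments transcendants qui apparaissent (logarithmes, exponentielles, expressions matricielles nilpotentes) s'inscrivent bel et bien dans la structure o-minimale $\RR^{\an,\exp}$, sans en sortir, et ce uniformément lorsque l'on traverse les différentes strates de la frontière.
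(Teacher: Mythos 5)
Le texte ne démontre pas cet énoncé~: il est cité tel quel comme le Théorème~1.1 de \cite{BKT} (« nous rassemblons ici le théorème 1.1 de \cite{BKT} \ldots pour la commodité »), de sorte qu'il n'y a pas de preuve interne à laquelle comparer la vôtre. Votre esquisse restitue néanmoins fidèlement l'architecture de la démonstration originale de Bakker--Klingler--Tsimerman~: recouvrement par un nombre fini d'ensembles de Siegel et propriété de Siegel (Proposition~\ref{Siegel}(ii)) pour la structure $\RR^{\alg}$-définissable de (1), factorisation $\pi_2\circ(q\cdot)\circ\pi_1$ avec action algébrique de $q$ pour (3), et théorie asymptotique de Schmid pour (2). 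Un seul point mérite d'être accentué~: dans (2), la difficulté centrale chez \cite{BKT} n'est pas tant la forme locale $\exp\bigl(\sum_i (\log z_i) N_i\bigr)\cdot F(z,w)$ donnée par l'orbite nilpotente --- qui devient $\RR^{\an,\exp}$-définissable sur un secteur borné de façon assez directe --- que le contrôle, via le théorème de l'orbite $\SL_2$ (et sa version à plusieurs variables de Cattani--Kashiwara--Schmid), du fait que l'image du relèvement reste dans un \emph{ensemble de Siegel fixé}, faute de quoi la composition avec la projection $\pi$ de la Proposition~\ref{defpi} ne serait pas définissable. Votre phrase « le raffinement par le théorème de l'orbite $\SL_2$ \ldots permet de globaliser » recouvre ce point mais en masque le rôle logique~: c'est là que se joue la preuve, pas dans un simple recollement.
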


\begin{definition}
	Une sous-vari\'et\'e \emph{sp\'eciale} (resp. \emph{faiblement sp\'eciale}) de $S$ pour $\VV$ est  une composante irr\'eductible de $\psi^{-1}(W)$ pour une  sous-vari\'et\'e sp\'eciale (resp. faiblement sp\'eciale) $W$ de $\Gam\bs\Dcal^+$.
\end{definition}

\begin{definition}[\cite{KO19} Definition 1.3]
 Une sous-vari\'et\'e irr\'eductible ferm\'ee $Z\subseteq S$ pour $\VV$ est dite \emph{de dimension de p\'eriode positive pour $\VV$} si son groupe de monodromie alg\'ebrique pour $\VV$ est non trivial ; ou de mani\`ere \'equivalente si l'application de p\'eriode $\psi$ ne contracte pas $Z$ en un point de $\Gam\bs\Dcal^+$. 	
\end{definition}

\begin{definition}
    Une sous-vari\'et\'e faiblement sp\'eciale $W$ de $\Gam\bs\Dcal^+$ est dite \emph{horizontale} si $W$ est contenue dans $\overline{\psi(S)}$ (la fermeture topologique de $\psi(S)$ dans $\Gam\bs\Dcal^+)$.
    
	Une sous-vari\'et\'e faiblement sp\'eciale $Z$ de $S$ est dite \emph{horizontale} pour $\VV$ si $Z$ est une composante irr\'eductible de $\psi^{-1}(W)$ pour une  sous-vari\'et\'e faiblement sp\'eciale horizontale $W$ de $\Gam\bs\Dcal^+$. 	
	
	Nous d\'esignons par $\WS^h(S,\VV)_+$ l'union des sous-vari\'et\'es faiblement sp\'eciales  de $S$, horizontales pour $\VV$,  strictes de dimension de p\'eriode positive.	
\end{definition}

\subsection{Enoncé du r\'esultat principal.}

Le résultat principal que nous avons en vue est
\begin{theorem}\label{equidis:VHS}
	Soit $\VV$ une $\ZZ$-VHS sur une vari\'et\'e quasi-projective complexe irr\'eductible lisse $S$. Si $\WS^h(S,\VV)_+\subseteq S$ est Zariski-dense dans $S$, alors (en passant au besoin à un revêtement fini étale de $S$) l'application de  p\'eriode
	\[
   \psi\colon S\to\Gam\bs\Dcal^+
\] 
se factorise en
\begin{displaymath}
	\begin{tikzpicture}
		\node (A) at (-4.5,0) {$S$};
		\node (B) at (0,0) {$\Gam_1\bs\Dcal_1^+\times\Gam_2\bs\Dcal_2^+$};
		\node (C) at (4.5,0) {$\Gam\bs\Dcal^+$,};
	
		\path[-latex]
		(A) edge node[above]{$(\psi_1, \psi_2)$} (B)
		(B) edge node[above]{$\varphi$} (C);
   \end{tikzpicture}
   \end{displaymath}
   o\`u $\Gam_1\bs\Dcal_1^+$ est une vari\'et\'e de Hodge connexe de type Shimura, $\varphi$ est un morphisme de Hodge fini, $\psi_1$ est un morphisme alg\'ebrique dominant et  
$\psi(S)$ est dense dans 
\[
    \varphi(\Gam_1\bs\Dcal_1^+\times \psi_2(S)).
\]
\end{theorem}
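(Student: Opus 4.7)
Notre stratégie est de ramener le Théorème~\ref{equidis:VHS} au c\oe{}ur dynamico-o-minimal déjà développé dans le corps du texte, en appliquant le Théorème~\ref{teo1.6} à l'intérieur de la variété de Hodge connexe $\Gam\bs\Dcal^+$, avec pour $V$ la clôture analytique et définissable de l'image de l'application de période.

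D'abord, comme $\WS^h(S,\VV)_+$ est Zariski-dense dans $S$, on extrait une suite $(Z_n)_{n\in\N}$ de sous-variétés faiblement spéciales horizontales de dimension de période positive qui soit générique dans $S$, stricte dans $\Gam\bs\Dcal^+$, et telle que chaque $Z_n$ soit maximale parmi les sous-variétés faiblement spéciales horizontales de dimension de période positive de $S$. Quitte à passer à un revêtement étale fini de $S$, chaque $Z_n$ est une composante de $\psi^{-1}(W_n)$ pour une sous-variété faiblement spéciale horizontale $W_n=\Gam\bs\Gam\Hbold_n(\R)^+\cdot g_n\cdot M_o$ de $\Gam\bs\Dcal^+$, avec $\Hbold_n\leq\Gbold$ un $\Q$-sous-groupe de type $\cHH$ et $g_n\in\Gbold(\R)^+$.

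Ensuite, par le Théorème~\ref{BKT20} l'application $\psi$ est $\R^{\an,\exp}$-définissable, donc $\widehat V:=\overline{\psi(S)}$ est un fermé analytique définissable de $\Gam\bs\Dcal^+$ contenant chaque $W_n$. Appliquant le Théorème~\ref{teo1.6} à $(\widehat V,(W_n)_{n\in\N})$, on extrait une sous-suite infinie et on obtient un $\Q$-sous-groupe $\Hbold_\infty\leq\Gbold$ de type $\cHH$, contenant chaque $\Hbold_n$, tel que les sous-ensembles homogènes agrandis $W'_n:=\Gam\bs\Gam\Hbold_\infty(\R)^+\cdot g_n\cdot M_o$ restent contenus dans $\widehat V$. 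Avec le ``baby case'' d'Ax-Lindemann pour VHS (à la Bakker-Tsimerman, analogue de la Proposition~\ref{BAL}), la clôture de Zariski dans $S$ de chaque composante analytique de $\psi^{-1}(W'_n)$ est une sous-variété faiblement spéciale horizontale de $S$ contenant $Z_n$. La maximalité des $Z_n$ force alors $\Hbold_n=\Hbold_\infty=:\Hbold$ pour tout $n$ assez grand, de sorte que tous les $W_n$ partagent une même ``direction'' $\Hbold$.

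Enfin, le caractère strict de $(Z_n)$ et une dichotomie analogue à la Proposition~\ref{p3.4} imposent à $\Hbold$ d'être un sous-groupe normal de $\Gbold^{\der}$, d'où une décomposition en produit presque direct $\Gbold^{\der}=\Hbold\cdot\Hbold^c$ et une factorisation correspondante des données de Hodge $(\Gbold,\Dcal^+)\simeq(\Hbold,\Dcal_1^+)\times(\Hbold^c,\Dcal_2^+)$; les $W_n$ sont alors les tranches $\Dcal_1^+\times\{t_n\}$ modulo $\Gam$. L'apport nouveau spécifique au cadre VHS est \emph{l'horizontalité}: elle impose à $\Lie(\Hbold)$ de se placer dans le sous-espace tangent horizontal en $x_o$, ce qui par les arguments standard de théorie de Hodge entraîne que $\Dcal_1^+$ est hermitien symétrique et que le cocaractère de Hodge $x_o\colon\SS\to\Hbold_\R$ définit une donnée de Shimura. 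Ainsi $\Gam_1\bs\Dcal_1^+$ est une variété de Shimura connexe, d'où la factorisation $\psi=\varphi\circ(\psi_1,\psi_2)$ recherchée; l'algébricité de $\psi_1$ découle du Théorème de Cattani-Deligne-Kaplan (ou de la définissabilité et d'un Chow définissable), et la densité de $\psi(S)$ dans $\varphi(\Gam_1\bs\Dcal_1^+\times\psi_2(S))$ provient de la densité Zariski des tranches horizontales $(Z_n)$ dans $S$.

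L'obstacle principal réside dans cette dernière étape: extraire un facteur de type Shimura à partir de la ``direction normale'' $\Hbold$ produite par l'extraction dynamico-o-minimale exige de traduire précisément l'horizontalité en la structure hermitienne symétrique de $\Dcal_1^+$. C'est là que le cadre VHS généralise véritablement le cadre Shimura du Théorème~\ref{T1}, et où un appel soigneux à la technologie de Bakker-Klingler-Tsimerman sur la définissabilité et Ax-Lindemann dans les domaines de périodes est indispensable.
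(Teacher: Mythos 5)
Votre démarche est pour l'essentiel celle du texte: même construction d'une suite $(Z_n)$ maximale et générique de sous-variétés faiblement spéciales horizontales à dimension de période positive, même application du Théorème~\ref{teo1.6} à l'image de période (rendue définissable par le Théorème~\ref{BKT20}), même usage du \og{}baby case\fg{} d'Ax-Lindemann (Théorème~\ref{AL:baby}) combiné à la maximalité pour forcer $\Hbold_n=\Hbold_\infty$, puis même conclusion via la normalité de $\Hbold_\infty$ et l'horizontalité qui rend $\Dcal^+_{\Hbold_\infty}$ hermitien symétrique.

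Le seul point où votre rédaction masque une vraie difficulté est la normalité de $\Hbold_\infty$ dans $\Gbold$. Vous l'obtenez par \og{}une dichotomie analogue à la Proposition~\ref{p3.4}\fg{}, mais cette proposition est un énoncé du cadre localement symétrique hermitien (elle repose sur l'analyse de $\cE(\HH)$ dans \cite{Ullmo3}) et ne se transporte pas telle quelle aux variétés de Hodge connexes générales. Le texte doit y substituer un lemme nouveau: si $\Hbold_\infty$ n'était pas normal, on forme $\widehat{\Hbold}_\infty=\Hbold_\infty\cdot\Zcent_\Gbold(\Hbold_\infty)^\circ$, on montre qu'il est réductif via le Lemme 5.1 de \cite{EMS97}, puis on établit par un argument de cohomologie galoisienne (finitude de $\ker(H^1(\RR,L_x)\to H^1(\RR,\widehat{\Hbold}_\infty(\CC)))$) qu'il n'y a qu'un nombre fini de sous-données de Hodge $(\widehat{\Hbold}_\infty,\Dcal_{\widehat{\Hbold}_\infty})$, de sorte que tous les membres de $\cE(\Hbold_\infty)$ sont piégés dans une union finie de sous-variétés spéciales strictes, contredisant la généricité de $(Z_n)$. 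C'est cet argument qu'il vous faut fournir; le reste de votre plan, y compris la réduction au cas propre via Griffiths que vous contournez en prenant $\overline{\psi(S)}$, et la densité finale obtenue par généricité des tranches, coïncide avec la preuve du texte.
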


Les ingr\'edients principaux de la preuve du Th\'eor\`eme \ref{equidis:VHS} sont le Th\'eor\`eme \ref{teo1.6} et le  "baby case" du th\'eor\`eme d'Ax-Lindemann  pour les  $\ZZ$-VHS suivant.

\begin{theorem}[Baby-Ax-Lindemann pour les $\ZZ$-VHS]\label{AL:baby} 
Soit $\Hbold$ un sous-groupe de $\Gbold$ de type $\Hcal$.
	Soit $W = \Gam\bs\Gam\Hbold(\RR)^+gM/M\subset \overline{\psi(S)}$ une sous-vari\'et\'e homog\`ene horizontale (ferm\'ee) de $\Gam\bs\Dcal^+$. Soit $Z$ une composante irr\'eductible de la clôture de Zariski de $\psi^{-1}(W)$. Alors $Z$ est une sous-vari\'et\'e faiblement sp\'eciale horizontale de $S$ pour $\VV$.	
\end{theorem}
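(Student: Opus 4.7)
La strat\'egie est de reproduire, dans le cadre des $\ZZ$-VHS, l'argument utilis\'e pour la Proposition~\ref{BAL} du cas Shimura, en mariant la d\'efinissabilit\'e de l'application de p\'eriode (Th\'eor\`eme~\ref{BKT20}) \`a une forme adapt\'ee du Th\'eor\`eme d'Ax-Lindemann hyperbolique pour les $\ZZ$-VHS.

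Premi\`erement, je montrerais que $\psi^{-1}(W)$ est d\'ej\`a alg\'ebrique dans $S$, identifiant ainsi la cl\^oture de Zariski de l'\'enonc\'e \`a $\psi^{-1}(W)$ lui-m\^eme. Le rel\`evement $\widetilde{W} = \Hbold(\RR)^+\cdot g\cdot M/M\subset \Dcal^+$ est semi-alg\'ebrique, comme trace sur l'ouvert semi-alg\'ebrique $\Dcal^+$ d'une orbite alg\'ebrique de $\Hbold$ dans le dual compact $\check{\Dcal}$. D'apr\`es le Th\'eor\`eme~\ref{BKT20}(1), $W$ est d\'efinissable dans la structure $\RR^{\alg}$ sur $\Gam\bs\Dcal^+$; puisque $\psi$ est $\RR^{\an,\exp}$-d\'efinissable par le Th\'eor\`eme~\ref{BKT20}(2), $\psi^{-1}(W)\subset S$ est une sous-vari\'et\'e analytique complexe ferm\'ee $\RR^{\an,\exp}$-d\'efinissable. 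Le Lemme de Chow o-minimal (tel qu'invoqu\'e dans \cite{BKT}) en d\'eduit son alg\'ebricit\'e; $Z$ en est alors une composante irr\'eductible.

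Deuxi\`emement, j'identifierais une sous-vari\'et\'e faiblement sp\'eciale $W'\subset\Gam\bs\Dcal^+$ dont $Z$ est une composante irr\'eductible de la pr\'eimage. Si $\psi(Z)$ est de dimension nulle, $Z$ est une composante d'une fibre de $\psi$, faiblement sp\'eciale horizontale par d\'efinition. Sinon, choisissons une composante $\widetilde{Z}$ de $p^{-1}(Z)\subset\widetilde{S}$ et un rel\`evement $\widetilde{W}$ de $W$ avec $\widetilde{\psi}(\widetilde{Z})\subset\widetilde{W}$; par o-minimalit\'e, $\widetilde{\psi}(\widetilde{Z})$ contient un ouvert non vide de $\widetilde{W}$. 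J'invoquerais ensuite le Th\'eor\`eme d'Ax-Lindemann hyperbolique pour les $\ZZ$-VHS (\'etabli par Bakker-Tsimerman) appliqu\'e \`a $Z$: il existe un $\QQ$-sous-groupe $\Hbold'\supseteq\Hbold$ de type $\Hcal$ d'origine de Hodge tel que $\widetilde{W}':=\Hbold'(\RR)^+\cdot g\cdot M/M$ soit la plus petite sous-vari\'et\'e faiblement sp\'eciale de $\Dcal^+$ contenant $\widetilde{\psi}(\widetilde{Z})$, cette image y \'etant Zariski-dense. L'image $W'=\pi(\widetilde{W}')$ est faiblement sp\'eciale dans $\Gam\bs\Dcal^+$, et $\psi^{-1}(W')$ est alg\'ebrique (Cattani-Deligne-Kaplan, \'etendu dans \cite{KO19}, Proposition~4.8). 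La Zariski-densit\'e de $\widetilde{\psi}(\widetilde{Z})$ dans $\widetilde{W}'$, combin\'ee \`a un contr\^ole des fibres g\'en\'eriques de $\psi$ au-dessus de $W'$, force $Z$ \`a \^etre une composante irr\'eductible de $\psi^{-1}(W')$, donc faiblement sp\'eciale. L'horizontalit\'e de $W'$, et donc de $Z$, r\'esulte de $\widetilde{W}'\subset\overline{\widetilde{\psi}(\widetilde{S})}$, d'o\`u $W'\subset\overline{\psi(S)}$.

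Le point d\'elicat est la v\'erification, dans le second pas, que $Z$ \'egale exactement une composante de $\psi^{-1}(W')$ et n'y est pas strictement incluse; cela repose sur la pr\'ecision \og~Zariski-dense~\fg{} fournie par le Th\'eor\`eme d'Ax-Lindemann et sur un contr\^ole fin des fibres de $\psi$ au-dessus de $W'$. Une approche \og~baby-case~\fg{} auto-contenue, dans l'esprit de la Proposition~2.6 de \cite{UY5}, \'eviterait l'invocation du th\'eor\`eme d'Ax-Lindemann complet en exploitant l'action du groupe alg\'ebrique $\Hbold$ sur $\widetilde{W}\subset\check{\Dcal}$ et la $\RR^{\an,\exp}$-d\'efinissabilit\'e de $\widetilde{\psi}$ restreinte \`a un domaine fondamental pour rigidifier directement le rel\`evement $\widetilde{Z}$, en utilisant que $\widetilde{\psi}(\widetilde{Z})$ contient d\'ej\`a un ouvert non vide de $\widetilde{W}$.
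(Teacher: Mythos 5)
Votre stratégie diverge de celle du texte et bute précisément sur l'obstacle que le texte signale juste après l'énoncé: le Théorème \ref{AL:baby} \og{}n'est pas une conséquence directe du théorème d'Ax-Lindemann dans ce contexte à cause des hypothèses et conclusions d'horizontalité\fg{}. Deux points de votre deuxième étape sont des lacunes réelles. D'abord, l'affirmation que \og{}par o-minimalité, $\widetilde{\psi}(\widetilde{Z})$ contient un ouvert non vide de $\widetilde{W}$\fg{} n'est pas justifiée pour une composante irréductible $Z$ \emph{arbitraire} de $\psi^{-1}(W)$: un argument de Baire ou de définissabilité fournit au mieux \emph{une} composante dont l'image remplit $W$, mais rien n'empêche $\psi(Z)$ d'être un sous-ensemble analytique propre de $W$ de dimension intermédiaire, cas que vous ne traitez pas. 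Ensuite et surtout, l'horizontalité de $W'$ --- c'est-à-dire l'inclusion $\widetilde{W}'\subset\overline{\widetilde{\psi}(\widetilde{S})}$ --- est exactement ce qu'il faut démontrer et n'est fournie par aucune version du théorème d'Ax-Lindemann pour les $\ZZ$-VHS: si $\Hbold'\supsetneq\Hbold$, l'orbite $\Hbold'(\RR)^+\cdot g$ peut sortir de l'adhérence de l'image des périodes, et $Z$ ne serait alors pas faiblement spéciale \emph{horizontale} au sens de la définition. Vous identifiez vous-même le \og{}point délicat\fg{} sans le résoudre; en l'état, la preuve est incomplète.

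La preuve du texte suit une route entièrement différente qui n'invoque pas Ax-Lindemann. On se ramène, via la factorisation de l'application des périodes par le groupe de monodromie algébrique $\Hbold_Z$ \emph{de $Z$} (et non $\Hbold$), au cas où la monodromie de $Z$ est Zariski-dense dans $\Gbold$, de sorte qu'il faut montrer $Z=S$. On considère ensuite une composante analytique $U$ de $\psi(\overline{\psi^{-1}(W)}^{\Zar})$ contenant $W$ et l'on montre, grâce à la définissabilité des correspondances de Hecke (Théorème \ref{BKT20} (3)) et au théorème de Chow définissable, que $U\subseteq T_q(U)\cap\psi(S)$ pour $q\in\Hbold(\QQ)^+$ convenable; le Lemme \ref{Const:Nori} (constante de Nori, irréductibilité de $T_q(U)$, densité topologique des orbites de Hecke, à la Edixhoven--Yafaev) conclut alors. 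C'est cette mécanique de correspondances de Hecke, et non la transcendance fonctionnelle, qui traite simultanément la maximalité et l'horizontalité. Votre première étape (algébricité de $\psi^{-1}(W)$ par Chow définissable) est en revanche correcte et cohérente avec les outils du texte.
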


On peut noter que cet énoncé n'est pas une conséquence directe du théorème d'Ax-Lindemann dans ce contexte à cause des hypothèses et conclusions d'horizontalité.  

\subsection{Description des sous-vari\'et\'es faiblement sp\'eciales de $S$ pour $\VV$} \label{descrip:WS} Les sous-vari\'et\'es faiblement sp\'eciales de $S$ pour $\VV$ peuvent \^etre d\'ecrites en utilisant les groupes de monodromie alg\'ebrique.

\begin{definition}
    Soit $Z$ une sous-vari\'et\'e alg\'ebrique irr\'eductible ferm\'ee de $S$ et soit $\nu\colon\hat{Z}\to Z$ sa normalisation. Le \emph{groupe de monodromie alg\'ebrique} $\Hbold_Z$ de $Z$ pour $\VV$ est d\'efini comme \'etant la fermeture de Zariski dans $\GL(V)$ de la monodromie du syst\`eme local $\nu^*\VV$ sur $\hat{Z}$. 	
\end{definition}

Soit $Z$ une sous-vari\'et\'e alg\'ebrique irr\'eductible ferm\'ee de $S$. Nous avons alors un diagramme commutatif
\begin{displaymath}
	\begin{tikzpicture}
		\node (A) at (-2,2) {$\hat{Z}$};
		\node (B) at (-2,0) {$Z$};
		\node (C) at (0,2) {$\Gamma_Z\bs\Dcal_Z^+$};
		\node (D) at (0,0) {$\Gam\bs\Dcal^+$};

		\path[-latex]
		(A) edge node[left]{$\nu$} (B)
		(A) edge node[above]{$\psi_Z$} (C)
		(B) edge node[below]{$\psi | _Z$} (D)
		(C) edge node[right]{$\iota_Z$} (D);
    \end{tikzpicture}
\end{displaymath}
o\`u $(\Gbold_Z,\Dcal_Z^+)$ est la donn\'ee de Hodge connexe g\'en\'erique de la restriction de $\VV$ au lieu lisse de $Z$ et $\Gam_Z = \Gam\cap\Gbold_Z(\QQ)_+$. Par le th\'eor\`eme de monodromie d'Andr\'e-Deligne \cite{An92} , $\Hbold_Z$ est un sous-groupe normal de $\Gbold_Z^\der$. Puisque $\Gbold_Z$ est r\'eductif, il existe un sous-groupe normal $\Gbold_Z^\prime$ de $\Gbold_Z$ et  un produit presque direct
\[
   \Gbold_Z = \Hbold_Z\Gbold_Z^\prime,
\]
qui induit une  d\'ecomposition de la donn\'ee de Hodge connexe adjointe $(\Gbold_Z^{\ad},  \Dcal_Z^+)$
\begin{equation*}
  (\Gbold_Z^{\ad},  \Dcal_Z^+) = (\Hbold_Z^{\ad}, \Dcal_{\Hbold_Z}^+)\times(\Gbold_Z^{\prime\ad}, \Dcal_{\Gbold_Z^\prime}^+).
\end{equation*}
On en déduit un morphisme de Hodge fini
\begin{equation*}
   \alpha_Z\colon\Gam_{\Hbold_Z}\bs\Dcal_{\Hbold_Z}^+\times\Gam_{\Gbold_Z^\prime}\bs\Dcal_{\Gbold_Z^\prime}^+\to \Gam_{Z}\bs\Dcal_{Z}^+,
\end{equation*}
et l'application de  p\'eriode $\psi_Z$ se factorise en
 \begin{equation*}
	\begin{tikzpicture}
		\node (A) at (-4.5,0) {$\hat{Z}$};
		\node (B) at (0,0) {$\Gam_{\Hbold_Z}\bs\Dcal_{\Hbold_Z}^+\times\Gam_{\Gbold_Z^\prime}\bs\Dcal_{\Gbold_Z^\prime}^+$};
		\node (C) at (4.5,0) {$\Gam_Z\bs\Dcal_Z^+$.};
	
		\path[-latex]
		(A) edge node[above]{$(\psi_Z^{nt}, \psi_Z^t)$} (B)
		(B) edge node[above]{$\alpha_Z$} (C);
   \end{tikzpicture}
   \end{equation*}
Le Lemme 4.2  de \cite{KO19} assure que de la projection $\psi_Z^t\colon\hat{Z}\to\Gam_{\Gbold_Z^\prime}\bs\Dcal_{\Gbold_Z^\prime}^+$ est constante. 
Donc si $Z$ est une sous-vari\'et\'e faiblement sp\'eciale de $(S,\VV)$, alors $Z$ est une composante irr\'eductible de 
\[
    \psi^{-1}(\iota_Z\circ\alpha_Z(\Gam_{\Hbold_Z}\bs\Dcal_{\Hbold_Z}^+\times\psi_t(\hat{Z}))) = \psi^{-1}(\pi(\Dcal^+_{\Hbold_Z}\times\{x_Z^{\dprime}\})),
\]
o\`u $x_Z^\dprime\in\Dcal_{\Gbold_Z^\prime}^+$ est un rel\`evement arbitraire à $\Dcal_{\Gbold_Z^\prime}^+$ du point $y^{\dprime}_Z:=\psi_Z^t(\hat{Z})$ .

On en déduit la description suivante des sous-vari\'et\'es faiblement sp\'eciales de $S$ pour $\VV$ : 

Soit $Z\subseteq S$ une sous-vari\'et\'e alg\'ebrique irr\'eductible ferm\'ee de $S$. Alors $Z$ est faiblement sp\'eciale pour $\VV$ si et seulement si $Z$ est maximale parmi toutes les sous-vari\'et\'es alg\'ebriques irr\'eductibles ferm\'ees de $S$ ayant le même groupe de monodromie alg\'ebrique que $Z$. 

Soient $x_Z^\prime \in\Dcal_{\Hbold_Z}^+$  et $x_Z := (x_Z^\prime, x_Z^\dprime)$. Alors
\[
    \Dcal^+_{\Hbold_Z}\times\Dcal_{\Gbold_Z^\prime}^+ = \Gbold_Z(\RR)^+\cdot x_Z =  \Hbold_Z(\RR)^+\cdot x_Z^\prime\times\Gbold_Z\prime(\RR)^+\cdot x_Z^\dprime,
\]
\[  
   \Dcal^+_{\Hbold_Z}\times\{x_Z^\dprime\} = \Hbold_Z(\RR)^+\cdot x_Z = \Hbold_Z(\RR)^+\cdot x_Z^\prime\times\{x_Z^\dprime\}.
\]
Donc
\[ \pi(\Dcal^+_{\Hbold_Z}\times\{x_Z^{\dprime}\}) = \pi(\Hbold_Z(\RR)^+\cdot x_Z) = \Gam\bs\Gam\Hbold(\RR)^+g_Z M/M
\]
pour un  $g_Z$ dans $\Gbold(\RR)^+$.

La description ci-dessus est canonique modulo $\Gam$-conjuguaison : si on fixe un domaine fondamental $\Fcal$ pour les actions de $\Gam$ sur $\Dcal^+$ et que l'on exige que $x_Z\in\Fcal$, alors on obtient une description canonique des sous-vari\'et\'es faiblement sp\'eciales.

\begin{remarque}
	Soit $Z$ une sous-vari\'et\'e horizontale faiblement sp\'eciale  de $(S,\VV)$. Alors $\Dcal_{\Hbold_Z}^+$ est un domaine sym\'etrique hermitien 
et $\pi(\Hbold_Z(\RR)^+\cdot x_Z)$ est une sous-vari\'et\'e homog\`ene (voir sous-section \ref{QAG}) de $\Gam\bs\Dcal^+$ contenue dans l'image de p\'eriode.
\end{remarque}

\begin{remarque}
    En sp\'ecialisant la discussion ci-dessus \`a $Z=S$, on peut ignorer la partie de monodromie  triviale du groupe g\'en\'erique de Mumford-Tate $\Gbold$ . On supposera donc dans la suite  que le groupe de monodromie de $S$ pour $\VV$ est Zariski dense dans $\Gbold$.	
\end{remarque}

\subsection{Les preuves} 

Pour les preuves des Théorèmes  \ref{equidis:VHS} et \ref{AL:baby}, nous pouvons et nous allons supposer que l'application de  p\'eriode $\psi$ pour $\VV$ est propre.  En effet, soit $\bar{S}$ une compactification projective lisse de $S$ avec $\bar{S}\bs S$ un diviseur à croisements normaux. Soit $S^\prime$ l'union de $S$ et des points  de $\bar{S}\bs S$ autour desquels les monodromies locales sont d'ordre fini. Par un résultat de Griffiths (\cite{GriPerIII} Th\'eor\`emes 9.5 et 9.6), l'application de  p\'eriode $\psi$ s'\'etend holomorphiquement en une application propre $\psi^\prime\colon S^\prime\to \Gam\bs\Dcal^+$ et l'image $\psi^\prime(S^\prime)$ contient $\psi(S)$ comme complémentaire d'une sous-vari\'et\'e analytique. Les conclusions des Th\'eor\`emes \ref{equidis:VHS} et \ref{AL:baby} pour $\psi$ se déduisent alors  de celles pour $\psi^\prime$.

Soit $Z$ une  sous-vari\'et\'e faiblement sp\'eciale de $S$ pour $\VV$. On note   $\mu_Z$ sur $\Gam\bs\Dcal^+$
sa mesure de probabilit\'e de Borel canonique associée. 
 Le support de $\mu_Z$ est
  $$
  \supp(\mu_Z) = \pi(\Hbold_Z(\RR)^+\cdot x_Z)
  $$ 
  et $\mu_Z$ est l'image sous la projection (induite par $x_Z$)
\[
   \Gam\bs G\to\Gam\bs\Dcal^+
\]
de la mesure homog\`ene $\mu_{\Hbold_Z}\cdot g_Z$ sur $\Gam\bs G$ qui est de type Ratner (voir D\'efinition \ref{écriture}).
\\

Admettons provisoirement le Th\'eor\`eme \ref{AL:baby} et prouvons le Th\'eor\`eme \ref{equidis:VHS}.

\begin{proof}[Preuve du Th\'eor\`eme \ref{equidis:VHS}] 

Par l'hypothèse on dispose d'une suite $(Z_n)_{n\in\NN}$ de sous-vari\'et\'es  faiblement sp\'eciales, horizontales,  à dimension de p\'eriode positive pour $(S,\VV)$ telle que
\begin{itemize}
	\item[(1)] pour tout $n\in\NN$, $Z_n$ est maximal parmi les sous-vari\'et\'es faiblement sp\'eciales horizontales à dimensions de p\'eriode positives pour $(S,\VV)$ ;
	\item[(2)] la suite $(Z_n)_{n\in \NN}$ est g\'en\'erique ;
	\item[(3)] soit $(W_n)_{n\in\NN}$ la suite correspondante de sous-vari\'et\'es faiblement sp\'eciales horizontales de $\Gam\bs\Dcal^+$. On peut \'ecrire
	\[ 
	    W_n = \Gam\bs\Gam\Hbold_n(\RR)^+ g_n M/M\subseteq\psi(S),
    \]
	o\`u $\Hbold_n$ est le groupe de monodromie alg\'ebrique de $Z_n$ (qui est de type $\Hcal$ ). 
	
	\end{itemize}
	
Par le Th\'eor\`eme \ref{BKT20} (2),  $\psi(S)$ est une sous-vari\'et\'e analytique $\RR{\an,\exp}$- d\'efinissable de $\Gam\bs\Dcal^+$.		Le Th\'eor\`eme \ref{teo1.6} assure alors
	 qu'il existe  un sous-groupe $\Hbold_\infty$ de $\Gbold$,  de type $\Hcal$, tel que pour tout $n\in\NN$, $\Hbold_n\subseteq\Hbold_\infty$ et tel que $\psi(S)$ contient les sous-vari\'et\'es homog\`enes
	\[
		       W_n^\prime = \Gam\bs\Gam\Hbold_\infty(\RR)^+g_n M/M\subseteq\psi(S).
    \]

Soit $Z_n^\prime$ une composante irr\'eductible de $\overline{\psi^{-1}(W_n^\prime)}^\Zar$ contenant $Z_n$. Par le baby case du th\'eor\`eme d'Ax-Lindemann pour les $\ZZ$-VHS (Th\'eor\`eme \ref{AL:baby}), $Z_n^\prime$ est une sous-vari\'et\'e faiblement sp\'eciale horizontale de $(S,\VV)$ et $\overline{\psi^{-1}(W_n^\prime)}^\Zar$ est une union finie de sous-vari\'et\'es faiblement sp\'eciales horizontales. Donc $W_n^\prime$ est contenu dans une union finie de sous-vari\'et\'es horizontales faiblement sp\'eciales de $\Gam\bs\Dcal^+$. Ainsi	
\[
    \Hbold_\infty\subseteq\Hbold_{Z_n^\prime}.
\]
Mais par la maximalit\'e de $Z_n$, on a pour tout $n\in\NN$
\[
	Z_n = Z_n^\prime
\]
et donc
\[
    \Hbold_n = \Hbold_\infty.
\]
Par cons\'equent
\[
		   W_n = \Gam\bs\Gam\Hbold_n(\RR)^+ g_n M/M = \Gam\bs\Gam\Hbold_\infty(\RR)^+ g_n M/M.
\]

\begin{lemme}
$\Hbold_\infty$ est un sous-groupe normal de $\Gbold$.
\end{lemme}

\begin{proof}
Soit $\Ecal(\Hbold_\infty)$ l'ensemble des sous-vari\'et\'es faiblement sp\'eciales de $(S,\VV)$ qui sont des composantes irr\'eductibles de $\psi^{-1}(\pi(\Hbold_\infty(\RR)^+\cdot x)$ pour un  point $x\in\Fcal$. Comme  $\Hbold_\infty$ est de type Ratner et  $\pi(\Hbold_\infty(\RR)^+\cdot x)$ est faiblement spécial, $\Hbold_\infty$ est un $\QQ$-sous-groupe algébrique semi-simple de $\Gbold$. Supposons que $\Hbold_\infty$ n'est pas normal dans $\Gbold$. Soit $\widehat{\Hbold}_\infty:=\Hbold_\infty\cdot\Zcent_\Gbold(\Hbold_\infty)^\circ$. 

Soit $x\in\Fcal$ tel que  $\psi^{-1}(\pi(\Hbold_\infty(\RR)^+\cdot x)$ soit une union finie de composantes irréductibles dans $\Ecal(\Hbold_\infty)$.  Soit $Z_x$ une composante irr\'eductible de $\psi^{-1}(\pi(\Hbold_\infty(\RR)^+\cdot x)$ et soit $\Gbold_x$ le groupe de Mumford-Tate générique de $\VV |_{Z_x^{\circ}}$, la restriction de $\VV$ au lieu lisse de $Z_x$.  Alors  $(\Gbold_x, \Dcal_x := \Gbold_x\cdot x)$ est une sous-donnée de Hodge de $(\Gbold,\Dcal)$ et $\Hbold_\infty$ est un sous-groupe normal de $\Gbold_x$.

Donc $\widehat{\Hbold}_\infty$ est un sous-groupe de $\Gbold$ contenant $\Gbold_x$.
Comme $\Zcent_{\Gbold}(\Gbold_x)^\circ(\RR)/\Zcent^\circ(\Gbold)(\RR)$ est compact, on en déduit que $\Zcent_\Gbold(\widehat{\Hbold}_\infty)(\RR)^\circ/\Zcent(\Gbold)^\circ(\RR)$ est compact, donc par le Lemme 5.1 de \cite{EMS97}, $\widehat{\Hbold}_\infty$ est r\'eductif. Notez qu'il n'existe qu'un nombre fini de classes de conjugaison de \linebreak $h\colon\SS_\CC\to\widehat{\Hbold}_{\infty,\CC}$. Soit $x\colon\SS\to\widehat{\Hbold}_{\infty,\RR}$ (sur $\RR$). Alors le nombre de formes r\'eelles de $x$ est la cardinalit\'e de 
	\[
    \ker(H^1(\RR, L_x)\to H^1(\RR,\widehat{\Hbold}_{\infty}(\CC))),
    \]
    qui est finie. Ici, $L_x$ est le stabilisateur de $x$ dans $\widehat{\Hbold}_{\infty}(\CC)$. Il n'existe donc qu'un nombre fini choix de $\Dcal_{\widehat{\Hbold}_{\infty}}$ tels que $(\widehat{\Hbold}_{\infty},\Dcal_{\widehat{\Hbold}_{\infty}})$ soit un sous-donn\'ee de Hodge de $(\Gbold,\Dcal)$.
    Il existe donc  une sous-vari\'et\'e ferm\'ee stricte (une union finie de sous-vari\'et\'es sp\'eciales strictes de $(S,\VV)$) contenant tous les membres de $\Ecal(\Hbold_\infty)$, ce qui contredit la g\'en\'ericit\'e de la suite $(Z_n)_{n\in\NN}$. 	
\end{proof}

On peut maintenant terminer la preuve du th\'eor\`eme \ref{equidis:VHS}. 

Comme $\Hbold_\infty$ est un sous-groupe normal de $\Gbold$, on a a une d\'ecomposition de la donn\'ee de Hodge connexe adjointe $(\Gbold^{\ad}, \Dcal^+)$.
\[
  (\Gbold^{\ad},  \Dcal^+) = (\Hbold_\infty^{\ad}, \Dcal_{\Hbold_\infty}^+)\times(\Gbold^{\prime\ad}, \Dcal^{\prime+}).
\]
et en passant au besoin à un revêtement étale fini de $S$, une factorisation de l'application de p\'eriode $\psi$
\begin{displaymath}
\begin{tikzpicture}
		\node (A) at (-4.5,0) {$S$};
		\node (B) at (0,0) {$\Gam_{\Hbold_\infty}\bs\Dcal_{\Hbold_\infty}^+\times\Gam^\prime\bs\Dcal^{\prime+}$};
		\node (C) at (4.5,0) {$\Gam\bs\Dcal^+$.};
	
		\path[-latex]
		(A) edge node[above]{$(\psi_1, \psi_2)$} (B)
		(B) edge node[above]{$\varphi$} (C);
   \end{tikzpicture}
   \end{displaymath}
   On a alors
   \[
      W_n = \varphi(\Gam_{\Hbold_\infty}\bs\Dcal_{\Hbold_\infty}^+\times\{x_n^\prime\}) = \pi(\Hbold_\infty(\RR)^+\cdot z_n)\subseteq\psi(S)
   \]
pour un certain $z_n = (x_n, x_n^\prime)\in\Fcal$. La vari\'et\'e de Hodge connexe $\Gam_{\Hbold_\infty}\bs\Dcal_{\Hbold_\infty}^+$ est donc de type Shimura et $\psi_1$ est alg\'ebrique dominante.	

Soit $V^\prime$ le sous-ensemble de $\Gam^\prime\bs\Dcal^{\prime+}$ formé des points $x^\prime$ tels que $\varphi(\Gam_{\Hbold_\infty}\bs\Dcal_{\Hbold_\infty}^+\times\{x^\prime\})$ est contenu dans $\psi(S)$. Comme la suite $(Z_n)_{n\in\NN}$ est générique,  $\psi_2^{-1}(V^\prime)$ est Zariski dense dans $S$. On en déduit que
\[
	\psi(S) = \varphi(\Gam_{\Hbold_\infty}\bs\Dcal_{\Hbold_\infty}^+\times\psi_2(S)).\qedhere
\]	
\end{proof}

\begin{proof}[Preuve du Th\'eor\`eme \ref{AL:baby}]
	Nous conservons les notations de la sous-section \ref{descrip:WS}. Nous devons montrer que $Z$ est une composante irr\'eductible de $\psi^{-1}(\pi(\Dcal^+_{\Hbold_Z}\times\{x_Z^{\dprime}\})$.
	En rempla\c{c}ant $S$ par une composante irr\'eductible de $\psi^{-1}(\pi(\Dcal^+_{\Hbold_Z}\times\{x_Z^{\dprime}\})$ contenant $Z$, nous devons alors montrer que $Z =S$ et que $S$ est horizontal. L'application de p\'eriode $\psi$ admet la factorisation
	 \begin{displaymath}
	\begin{tikzpicture}
		\node (A) at (-4.5,0) {$S$};
		\node (B) at (0,0) {$\Gam_{\Hbold_Z}\bs\Dcal_{\Hbold_Z}^+\times\Gam_{\Gbold_Z^\prime}\bs\Dcal_{\Gbold_Z^\prime}^+$};
		\node (C) at (4.5,0) {$\Gam\bs\Dcal^+$.};
	
		\path[-latex]
		(A) edge node[above]{$(\psi_Z^{nt}, \psi_Z^t)$} (B)
		(B) edge node[above]{$\alpha_Z$} (C);
   \end{tikzpicture}
   \end{displaymath} 
    La projection $\psi_Z^t\colon S\to\Gam_{\Gbold_Z^\prime}\bs\Dcal_{\Gbold_Z^\prime}^+$ est constante. On en déduit que 
      $$
      \psi^{-1}(\pi(\Dcal^+_{\Hbold_Z}\times\{x_Z^{\dprime}\}) = (\psi^{nt}_Z)^{-1}(\Gam_{\Hbold_Z}\bs\Dcal_{\Hbold_Z}^+)
      $$
       On peut ainsi remplacer $\psi$ par $\psi^{nt}_Z$ et supposer que le groupe de monodromie de $Z$ est dense dans $\Gbold$.
    
    Soit $U$ une composante analytique irr\'eductible de $\psi(\overline{\psi^{-1}(W)}^\Zar)$ contenant $W$.
    
    Soient $q\in\Hbold(\QQ)^+$ et $T_q$ la correspondance de Hecke induite sur $\Gam\bs\Dcal^+$. Par le Th\'eor\`eme \ref{BKT20} (3), la correspondance de Hecke $T_q$ est $\RR^\alg$-d\'efinissable. Donc $T_q(U)$ est $\RR^{\an,\exp}$ d\'efinissable et analytique complexe, et 
\[
   \psi^{-1}(T_q(U))
\]
est $\RR{\an,\exp}$-d\'efinissable et analytique complexe. Le Th\'eor\`eme de Chow d\'efinissable nous assure alors que
\[
   \psi^{-1}(T_q(U))
\]
est une sous-vari\'et\'e alg\'ebrique de $S$. 
 Comme $W\subseteq T_q(W)$, $\psi^{-1}(W) \subseteq \psi^{-1}(T_q(U))$ et  
 $$
 \overline{\psi^{-1}(W)}^{Zar}\subseteq\psi^{-1}(T_q(U)).
 $$ Par cons\'equent, $U\subseteq T_q(U)\cap\psi(S)$.
Le Th\'eor\`eme \ref{AL:baby} d\'ecoulera alors du Lemme suivant. 	 
\end{proof}

\begin{lemme}\label{Const:Nori}  Il existe une constante $C(U)$, appel\'ee \emph{constante de Nori} de $U$ ayant la propriété suivante. Soit $p>C(U)$ un nombre premier $p>C(U)$.  Soit $q\in\Hbold(\QQ)^+$ tel que pour tout $\ell\neq p$, $g_\ell\in\Gam\cap\Gbold(\ZZ_\ell)$ et tel que les projections de $g_p$ sur chaque facteur $\QQ$-simple de $\Gbold^{\ad}$ ne sont pas contenues dans des sous-groupes compacts. Alors  $T_q(U)$ est irr\'eductible et les orbites de Hecke de $T_q+T_{q^{-1}}$ sont  denses dans $\Gam\bs\Dcal^+$ pour la topologie Archimédienne.  	
\end{lemme}

\begin{remarque}
	Dans le cas de  vari\'et\'e de Shimura, l'existence de la constante de Nori, l'irr\'eductibilit\'e et la densit\'e des orbites de Hecke ont \'et\'e prouv\'ees par Edixhoven et Yafaev (\cite{EY03} Th\'eor\`emes 5.1 et 6.1). L'existence de $q$ a \'et\'e détaillée  dans (\cite{UY14} Lemme 6.1). Les arguments sont essentiellement de nature topologique et peuvent \^etre facilement adapt\'es \`a notre cas. Seule l'existence de la constante de Nori n\'ecessite de petites modifications, que nous indiquons ici et renvoyons \`a \cite{EY03} et \cite{UY14} pour les d\'etails.
\end{remarque}

\begin{proof}[Esquisse de la preuve du Lemme \ref{Const:Nori}]
Consid\'erons la correspondance de Hecke
\[
   T_q\colon \Gam\bs\Dcal^+\stackrel{\pi_{q,1}}{\longleftarrow}q^{-1}\Gam q\cap\Gam\bs\Dcal^+\stackrel{\pi_{q,2}}{\longrightarrow}\Gam\bs\Dcal^+,
\]
o\`u $\pi_{q,1}$ est la projection \'etale naturelle finie. Soit $U_q := \pi_{q,1}^{-1}(U)$. Alors pour montrer que $T_q(U)$ est irr\'eductible, il suffit de montrer que $U_q$ est irr\'eductible. Puisque $U_q$ est analytique, il suffit de montrer que le lieu lisse $U_q^\circ$ est connexe. Si on se limite  aux lieux lisses, l'application $U_q^\circ\to U^\circ$ est une application de recouvrement. Ce recouvrement correspond au $\pi_1(U^\circ)$-ensemble 
$$
q^{-1}\Gam q\cap\Gam\bs\Gamma
$$
 et $U_q^\circ$ est connexe si et seulement si $\pi_1(U^\circ)$ agit sur $q^{-1}\Gam q\cap\Gam\bs\Gamma$ transitivement.
	
	Notez que nous avons des homomorphismes
\[
   \pi_1(Z) \to \pi_1(U) \to \Gam\subset\Gbold(\ZZ)\subset\GL(V_\ZZ).
\]
Puisque $\Gam$ est un sous-groupe arithm\'etique de $\Gbold(\QQ)$, il est Zariski dense dans $\Gbold$. Donc $\pi_1(U^\circ)$ et $\Gam$ ont la m\^eme fermeture de Zariski dans le $\ZZ$-sch\'ema en groupes $\GL(V_\ZZ)$ (ici nous avons utilis\'e la r\'eduction dans la preuve du Th\'eor\`eme \ref{AL:baby} que le groupe de monodromie de $Z$ pour $\VV$ est Zariski-dense dans $\Gbold$). Le reste de la preuve est la m\^eme que dans \cite{EY03} Th\'eor\`emes 5.1 et 6.1. La preuve de l'existence de $q$ peut \^etre reproduite mot pour mot de \cite{UY14} Lemma 6.1.
\end{proof}

\begin{bibsection}
\begin{biblist}

\bibitem{An92}\textit{Y.~Andr\'e},
Mumford-Tate groups of mixed Hodge structures and the theorem of the fixed part,
Compositio Math. \textbf{82} (1992), no. 1, 1--24.

\bibitem{BKT} B. Bakker, B. Klingler and J. Tsimerman, {\it  Tame Topology of Arithmetic Quotients and Algebraicity of the Hodge Loci}
J. Amer. Math. Soc. {\bf 33} (2020), 917-939 

\bib{CosteRoy}{book}{
   author={Bochnak, J.},
   author={Coste, M.},
   author={Roy, M.-F.},
   title={Real algebraic geometry},
   series={Ergebnisse der Mathematik und ihrer Grenzgebiete (3) [Results in
   Mathematics and Related Areas (3)]},
   volume={36},
   note={Translated from the 1987 French original;
   Revised by the authors},
   publisher={Springer-Verlag, Berlin},
   date={1998},
   pages={x+430},
   isbn={3-540-64663-9},
   review={\MR{1659509}},
   doi={10.1007/978-3-662-03718-8},
 }  

\bib{Bogachev2}{book}{
   author={Bogachev, V. I.},
   title={Measure theory. Vol. I, II},
   publisher={Springer-Verlag, Berlin},
   date={2007},
   pages={Vol. I: xviii+500 pp., Vol. II: xiv+575},
   isbn={978-3-540-34513-8},
   isbn={3-540-34513-2},
   review={\MR{2267655}},
   doi={10.1007/978-3-540-34514-5},
}
\bib{Bogachev3}{book}{
   author={Bogachev, V. I.},
   title={Weak convergence of measures},
   series={Mathematical Surveys and Monographs},
   volume={234},
   publisher={American Mathematical Society, Providence, RI},
   date={2018},
   pages={xii+286},
   isbn={978-1-4704-4738-0},
   review={\MR{3837546}},
}
\bib{BorelIntro}{book}{
   author={Borel, A.},
   title={Introduction aux groupes arithm\'{e}tiques},
   language={French},
   series={Publications de l'Institut de Math\'{e}matique de l'Universit\'{e} de
   Strasbourg, XV. Actualit\'{e}s Scientifiques et Industrielles, No. 1341},
   publisher={Hermann, Paris},
   date={1969},
   pages={125},
   review={\MR{0244260}},
}
\bibitem{Borel-LAG} A. Borel, {\it Linear Algebraic Groups } Second Enlaged Edition, Graduate texts in Maths {\bf 126}. Springer 1991.

\bib{BHC}{article}{
   author={Borel, A.},
   author={Harish-Chandra},
   title={Arithmetic subgroups of algebraic groups},
   journal={Ann. of Math. (2)},
   volume={75},
   date={1962},
   pages={485--535},
   issn={0003-486X},
   review={\MR{147566}},
   doi={10.2307/1970210},
}
\bib{BHCBull}{article}{
   author={Borel, A.},
   author={Harish-Chandra},
   title={Arithmetic subgroups of algebraic groups},
   journal={Bull. Amer. Math. Soc.},
   volume={67},
   date={1961},
   pages={579--583},
   issn={0002-9904},
   review={\MR{141670}},
   doi={10.1090/S0002-9904-1961-10699-X},
}
\bib{BorelJi}{book}{
   author={Borel, A.},
   author={Ji, L.},
   title={Compactifications of symmetric and locally symmetric spaces},
   series={Mathematics: Theory \& Applications},
   publisher={Birkh\"{a}user Boston, Inc., Boston, MA},
   date={2006},
   pages={xvi+479},
   isbn={978-0-8176-3247-2},
   isbn={0-8176-3247-6},
   review={\MR{2189882}},
}
\bib{BBKINT}{book}{
   author={Bourbaki, N.},
   title={Integration. II. Chapters 7--9},
   series={Elements of Mathematics (Berlin)},
   note={Translated from the 1963 and 1969 French originals by Sterling K.
   Berberian},
   publisher={Springer-Verlag, Berlin},
   date={2004},
   pages={viii+326},
   isbn={3-540-20585-3},
   review={\MR{2098271}},
}
\bibitem{CattaniDeligneKaplan95}\textit{E.~Cattani}, \textit{P.~Deligne} and \textit{A.~Kaplan},
On the locus of Hodge classes,
J. Amer. Math. Soc. \textbf{8} (1995), no. 2, 483--506.

\bibitem{C21} J.~Chen, {\it On the geometric Andr\'e-Oort conjecture for variations of Hodge structures}, à paraitre dans J. Reine Angew. Math.

\bib{ClozelUllmo1}{article}{
   author={Clozel, L.},
   author={Ullmo, E.},
   title={\'{E}quidistribution de sous-vari\'{e}t\'{e}s sp\'{e}ciales},
   language={French},
   journal={Ann. of Math. (2)},
   volume={161},
   date={2005},
   number={3},
   pages={1571--1588},
   issn={0003-486X},
   review={\MR{2180407}},
   doi={10.4007/annals.2005.161.1571},
}
\bib{Coste2}{unpublished} 
        {
        title = {An introduction to o-minimal geometry.},
        author = {Coste, M.},
        year = {(1999)},
        note = {Accessible en ligne à \url{https://perso.univ-rennes1.fr/michel.coste/polyens/OMIN.pdf}}
        }
 
 \bibitem{DaMa} S.G Dani and  G.A Margulis.
 {\it Asymptotic behaviour of trajectories of unipotent flows
 on homogeneous spaces}, Proc. Indian Acad. Sci.
 ( Math. Sci.) vol {\bf 101},  No {\bf 1} (1991), p. 1--17.

\bib{DGU1}{article}{
   author={Daw, C.},
   author={Gorodnik, A.},
   author={Ullmo, E.},
   title={Convergence of measures on compactifications of locally symmetric spaces},
   journal={Math Zeitchrift,},
   volume={297 (3)},
   date={2021},
   pages={1293--1328},
}

\bib{DGU2}{article}{
   author={Daw, C.},
   author={Gorodnik, A.},
   author={Ullmo, E.},
   title={The space of homogeneous probability measures on $\overline{\Gamma\backslash X}^S_{max}$ is compact},
   journal={Preprint},
}

\bib{Dries}{book}{
   author={van den Dries, L.},
   title={Tame topology and o-minimal structures},
   series={London Mathematical Society Lecture Note Series},
   volume={248},
   publisher={Cambridge University Press, Cambridge},
   date={1998},
   pages={x+180},
   isbn={0-521-59838-9},
   review={\MR{1633348}},
   doi={10.1017/CBO9780511525919},
}
\bib{ActesDries}{inproceedings}{
author={van den Dries, L.},
year={2005},
title={Limit sets in o-minimal structures}
booktitle={O-minimal Structures: Lisbon 2003 ; Proceedings of a Summer School by the European Research and Training Network RAAG}
}

\bibitem{EY03} B.~Edixhoven and   A.~Yafaev,
{\it Subvarieties of Shimura varieties}.
Ann. of Math. (2) \textbf{157} (2003), no. 2, 621--645.

\bib{HazeEncy}{misc}
{
author={Encyclopedia of Mathematics},
title={Urysohn-Brouwer lemma},
howpublished = {Website},
note = {URL permanent : \url{https://www.encyclopediaofmath.org/index.php title=Urysohn-Brouwer_lemma&oldid=23095}},
}
 
 \bibitem{EMS97} A.~Eskin, S.~Mozes and N.~Shah,
{\it Non-divergence of translates of certain algebraic measures},
Geom. Funct. Anal. \textbf{7} (1997), no. 1, 48--80.

\bib{RieszHistoire}{article}{
   author={Gray, J. D.},
   title={The shaping of the Riesz representation theorem: a chapter in the
   history of analysis},
   journal={Arch. Hist. Exact Sci.},
   volume={31},
   date={1984},
   number={2},
   pages={127--187},
   issn={0003-9519},
   review={\MR{753703}},
   doi={10.1007/BF00348293},
}

\bibitem{GriPerIII} P.~Griffiths,
{\it Periods of integrals on algebraic manifolds. III. Some global differential-geometric properties of the period mapping,}
Inst. Hautes \'Etudes Sci. Publ. Math. No. \textbf{38} (1970), 125--180.

\bib{Guivarc'h}{book}{
   author={Guivarc'h, Y.},
   author={Ji, L.},
   author={Taylor, J. C.},
   title={Compactifications of symmetric spaces},
   series={Progress in Mathematics},
   volume={156},
   publisher={Birkh\"{a}user Boston, Inc., Boston, MA},
   date={1998},
   pages={xiv+284},
   isbn={0-8176-3899-7},
   review={\MR{1633171}},
}

\bib{Helgason}{book}{
   author={Helgason, S.},
   title={Differential geometry, Lie groups, and symmetric spaces},
   series={Graduate Studies in Mathematics},
   volume={34},
   note={Corrected reprint of the 1978 original},
   publisher={American Mathematical Society, Providence, RI},
   date={2001},
   pages={xxvi+641},
   isbn={0-8218-2848-7},
   review={\MR{1834454}},
   doi={10.1090/gsm/034},
}
\bibitem{Klingler17} B.~Klingler,
{\it Hodge locus and atypical intersections: conjectures,}
à paraitre dans  Motives and complex multiplication.

\bibitem{KO19} B.~Klingler and A.~Otwinowska,
{\it On the closure of the positive Hodge locus},
à paraitre dans  Invent. Math.

\bib{KUYSurvey}{article}{
   author={Klingler, B.},
   author={Ullmo, E.},
   author={Yafaev, A.},
   title={Bi-algebraic geometry and the Andr\'{e}-Oort conjecture},
   conference={
      title={Algebraic geometry: Salt Lake City 2015},
   },
   book={
      series={Proc. Sympos. Pure Math.},
      volume={97},
      publisher={Amer. Math. Soc., Providence, RI},
   },
   date={2018},
   pages={319--359},
   review={\MR{3821177}},
}
\bib{KUY}{article}{
   author={Klingler, B.},
   author={Ullmo, E.},
   author={Yafaev, A.},
   title={The hyperbolic Ax-Lindemann-Weierstrass conjecture},
   journal={Publ. Math. Inst. Hautes \'{E}tudes Sci.},
   volume={123},
   date={2016},
   pages={333--360},
   issn={0073-8301},
   review={\MR{3502100}},
   doi={10.1007/s10240-015-0078-9},
}
\bib{Lion}{article}{
   author={Lion, J.-M.},
   author={Speissegger, P.},
   title={A geometric proof of the definability of Hausdorff limits},
   journal={Selecta Math. (N.S.)},
   volume={10},
   date={2004},
   number={3},
   pages={377--390},
   issn={1022-1824},
   review={\MR{2099073}},
   doi={10.1007/s00029-004-0360-z},
}
\bib{Michael}{article}{
   author={Michael, E.},
   title={Topologies on spaces of subsets},
   journal={Trans. Amer. Math. Soc.},
   volume={71},
   date={1951},
   pages={152--182},
   issn={0002-9947},
   review={\MR{42109}},
   doi={10.2307/1990864},
}

\bib{Margulis}{book}{
   author={Margulis, G. A.},
   title={Discrete subgroups of semisimple Lie groups},
   series={Ergebnisse der Mathematik und ihrer Grenzgebiete (3) [Results in
   Mathematics and Related Areas (3)]},
   volume={17},
   publisher={Springer-Verlag, Berlin},
   date={1991},
   pages={x+388},
   isbn={3-540-12179-X},
   review={\MR{1090825}},
   doi={10.1007/978-3-642-51445-6},
}

\bib{Moonen1}{article}{
   author={Moonen, Ben},
   title={Linearity properties of Shimura varieties. I},
   journal={J. Algebraic Geom.},
   volume={7},
   date={1998},
   number={3},
   pages={539--567},
   issn={1056-3911},
   review={\MR{1618140}},
}
\bib{Moonen2}{article}{
   author={Moonen, B.},
   title={Linearity properties of Shimura varieties. II},
   journal={Compositio Math.},
   volume={114},
   date={1998},
   number={1},
   pages={3--35},
   issn={0010-437X},
   review={\MR{1648527}},
   doi={10.1023/A:1000411631772},
}
\bib{Witte}{book}{
   author={Morris, D. W.},
   title={Ratner's theorems on unipotent flows},
   series={Chicago Lectures in Mathematics},
   publisher={University of Chicago Press, Chicago, IL},
   date={2005},
   pages={xii+203},
   isbn={0-226-53983-0},
   isbn={0-226-53984-9},
   review={\MR{2158954}},
}

\bibitem{MoSh} S. Mozes, N. Shah
 {\it On the space of ergodic invariant measures of unipotent flows},
 Ergod. Th. and Dynam. Sys. {\bf 15}, (1995), p. 149-159.

\bib{OV3}{book}{
   author={Onishchik, A. L.},
   author={Vinberg, \`E. B.},
   title={Lie groups and algebraic groups},
   series={Springer Series in Soviet Mathematics},
   note={Translated from the Russian and with a preface by D. A. Leites},
   publisher={Springer-Verlag, Berlin},
   date={1990},
   pages={xx+328},
   isbn={3-540-50614-4},
   review={\MR{1064110}},
   doi={10.1007/978-3-642-74334-4},
}

\bib{Penot}{article}{
   author={Penot, J.-P.},
   author={Th\'{e}ra, M.},
   title={Semicontinuous mappings in general topology},
   journal={Arch. Math. (Basel)},
   volume={38},
   date={1982},
   number={2},
   pages={158--166},
   issn={0003-889X},
   review={\MR{650347}},
   doi={10.1007/BF01304771},
}

\bibitem{PS}  Y. Peterzil, S. Starchenko {\it Definability of restricted theta functions and families of Abelian varieties } Duke. Math. J {\bf 162} (2013), 731--765.

\bibitem{PT} J. Pila, J. Tsimerman:
{\it The Andre-Oort conjecture for the moduli space of Abelian surfaces},
Compositio 149 (2013), 204-216

\bib{Platonov}{book}{
   author={Platonov, V.},
   author={Rapinchuk, A.},
   title={Algebraic groups and number theory},
   series={Pure and Applied Mathematics},
   volume={139},
   note={Translated from the 1991 Russian original by Rachel Rowen},
   publisher={Academic Press, Inc., Boston, MA},
   date={1994},
   pages={xii+614},
   isbn={0-12-558180-7},
   review={\MR{1278263}},
}
\bib{Pompeiu}{article}{
   author={Pompeiu, D.},
   title={Sur la continuit\'{e} des fonctions de variables complexes},
   language={French},
   journal={Ann. Fac. Sci. Toulouse Sci. Math. Sci. Phys. (2)},
   volume={7},
   date={1905},
   number={3},
   pages={265--315},
   issn={0996-0481},
   review={\MR{1508277}},
}
\bib{Raghunathan}{book}{
   author={Raghunathan, M. S.},
   title={Discrete subgroups of Lie groups},
   note={Ergebnisse der Mathematik und ihrer Grenzgebiete, Band 68},
   publisher={Springer-Verlag, New York-Heidelberg},
   date={1972},
   pages={ix+227},
   review={\MR{0507234}},
}
\bib{RatnerICM}{article}{
   author={Ratner, M.},
   title={Interactions between ergodic theory, Lie groups, and number
   theory},
   conference={
      title={Proceedings of the International Congress of Mathematicians,
      Vol. 1, 2},
      address={Z\"{u}rich},
      date={1994},
   },
   book={
      publisher={Birkh\"{a}user, Basel},
   },
   date={1995},
   pages={157--182},
   review={\MR{1403920}},
}
	
\bib{Ra91a}{article}{
   author={Ratner, M.},
   title={On Raghunathan's measure conjecture},
   journal={Ann. of Math. (2)},
   volume={134},
   date={1991},
   number={3},
   pages={545--607},
   issn={0003-486X},
   review={\MR{1135878}},
   doi={10.2307/2944357},
}
\bib{Rudin}{book}{
   author={Rudin, W.},
   title={Real and complex analysis},
   publisher={McGraw-Hill Book Co., New York-Toronto, Ont.-London},
   date={1966},
   pages={xi+412},
   review={\MR{0210528}},
}
\bib{Shah90}{article}{
   author={Shah, N.},
   title={Uniformly distributed orbits of certain flows on homogeneous
   spaces},
   journal={Math. Ann.},
   volume={289},
   date={1991},
   number={2},
   pages={315--334},
   issn={0025-5831},
   review={\MR{1092178}},
   doi={10.1007/BF01446574},
}
\bib{Tsimerman}{article}{
    AUTHOR = {Tsimerman, J.},
     TITLE = {The {A}ndr\'{e}-{O}ort conjecture for {$\mathcal A_g$}},
   JOURNAL = {Ann. of Math. (2)},
  FJOURNAL = {Annals of Mathematics. Second Series},
    VOLUME = {187},
      YEAR = {2018},
    NUMBER = {2},
     PAGES = {379--390},
      ISSN = {0003-486X},
   MRCLASS = {11G15 (11G18 14G35)},
  MRNUMBER = {3744855},
MRREVIEWER = {Patrick Morton},
       DOI = {10.4007/annals.2018.187.2.2},
       URL = {https://doi.org/10.4007/annals.2018.187.2.2},
}

\bib{ClozelUllmo2}{article}{
   author={Ullmo, E.},
   title={Equidistribution de sous-vari\'{e}t\'{e}s sp\'{e}ciales. II},
   language={French},
   journal={J. Reine Angew. Math.},
   volume={606},
   date={2007},
   pages={193--216},
   issn={0075-4102},
   review={\MR{2337648}},
}

\bib{Ullmo3}{article}{
   author={Ullmo, E.},
   title={Applications du th\'{e}or\`eme d'Ax-Lindemann hyperbolique},
   language={French, with English summary},
   journal={Compos. Math.},
   volume={150},
   date={2014},
   number={2},
   pages={175--190},
   issn={0010-437X},
   review={\MR{3177266}},
   doi={10.1112/S0010437X13007446},
}
\bib{Ullmo2}{article}{
   author={Ullmo, E.},
   title={Equidistribution de sous-vari\'{e}t\'{e}s sp\'{e}ciales. II},
   language={French},
   journal={J. Reine Angew. Math.},
   volume={606},
   date={2007},
   pages={193--216},
   issn={0075-4102},
   review={\MR{2337648}},
}
\bibitem{UY14}\textit{E.~Ullmo} and \textit{A.~Yafaev},
{\it Hyperbolic Ax-Lindemann theorem in the cocompact case.}
Duke Math. J. \textbf{163} (2014), no. 2, 43--463.

\bibitem{UY5} E. Ullmo, A.Yafaev {\it Algebraic Flows on Shimura Varieties}, Manuscripta math. {\bf 155} 355-367 (20918).

\bib{Zell}{article}{
   author={Zell, T.},
   title={Topology of definable Hausdorff limits},
   journal={Discrete Comput. Geom.},
   volume={33},
   date={2005},
   number={3},
   pages={423--443},
   issn={0179-5376},
   review={\MR{2121989}},
   doi={10.1007/s00454-004-1112-8},
}
\end{biblist}
\end{bibsection}
\end{document}